\documentclass[11pt,reqno]{amsart}
\usepackage{amssymb}
\usepackage{amsmath,amsthm}
\usepackage{amsfonts}
\usepackage{leftidx}
\usepackage{mathrsfs}
\usepackage{enumerate}  
\usepackage{abstract}
\usepackage{stmaryrd}
\usepackage{ulem}
\usepackage{slashed}
 \usepackage{float}
\usepackage{graphicx}
 \usepackage{hyperref}
 
 \usepackage[top=1in, bottom=1in, left=0.8in, right=0.8in]{geometry}

\def\R{\mathbb{R}}
\def\Z{\mathbb{Z}}

\def\d{|\nabla|}

\def\p{\partial}

\def\vo{\vspace{1\baselineskip}}

\def\be{\begin{equation}}
\def\ee{\end{equation}}

\newtheorem{theorem}{Theorem}[section]

\newtheorem{lemma}{Lemma}[section]
\newtheorem{proposition}{Proposition}[section]
\theoremstyle{definition}

\theoremstyle{remark}
\newtheorem{remark}{Remark}[section]

\setcounter{tocdepth}{2}
\numberwithin{equation}{section}
\begin{document}
\title[  3D anisotropic  NLW   ]{  Global solution of 3D anisotropic   wave equations with the same speed in one direction }
\author{Xuecheng Wang}
\address{YMSC, Tsinghua University \& BIMSA\\ Beijing, China 100084}
\email{xuecheng@tsinghua.edu.cn}

\maketitle 
\begin{abstract}
Motivated by the study of uniaxial crystal optics, we consider a coupled system of $3D$ anisotropic wave equations, in which they have the same speed only in one direction but distinct speeds in the other directions.   Moreover,  this system has a $1D$-type null structure along the common direction. 

Unlike the isotropic case, in which the celebrated Klainerman vector fields method is very successful,  the main difficulty of the anisotropic case lies in the absence of enough joint commuting vector fields. After carefully analyzing the differences between group velocities of two waves, which are delicate near the same speed direction,   the role of null structure, and a fine structure of the phase of oscillation in time,   we prove global stability and scattering for small localized initial data. 
\end{abstract}

\tableofcontents

\section{Introduction}

\subsection{Motivation and the problem set-up}

 According to Courant-Hilbert \cite{CouHil62}[Chapter VI: section 3a] and Born-Wolf \cite{BornWolf}, the system of crystal optics is   Maxwell's system in homogeneous   media. Moreover, from Born-Wolf\cite{BornWolf}[page 841], based on the optical properties, we can classify transparent crystals into three distinct groups. Both group (II) and group (III) are anisotropic crystals. More precisely,  
 \begin{enumerate}

\item[(I)] Cubic systems. Cubic system is optically isotropic. Dielectric constants are all same, e.g., $\epsilon_x=\epsilon_y=\epsilon_z$. An example of cubic systems is glass. 
\item[(II)] Biaxial crystals.  Dielectric constants are all different, e.g., $\epsilon_x\neq \epsilon_y\neq \epsilon_z$.     An example of biaxial crystals is biotite. 
\item[(III)] Uniaxial crystals. Only two out of three dielectric constants are  same, e.g., $\epsilon_x=\epsilon_y\neq \epsilon_z$. An example of uniaxial crystals is quartz. 

\end{enumerate}

In the seminal work of Klainerman \cite{Kl85}, Klainerman makes use of the conformal structure of the Minkowski spacetime,  more precisely, that is   the existence of  a collection of  vector fields  commute with the wave operator, which allows him to prove decay estimate of the nonlinear solution. This celebrated Klainerman vector fields method has been very influential in the study of NLW for the past 30 years. We will discuss more about history in the next subsection.   Due to the isotropic property of cubic systems, Klainerman vector field method is very successful for the study of crystal optics in group (I).

Comparing with the rich literature in the study of the NLW with the same speed, the study of  anisotropic system of nonlinear waves is rather at the preliminary stage. A general picture of thinking questions related to the anisotropic system of NLW started with Fritz John, which in particular  includes   questions on anisotropic elastic dynamics and the crystal optics.   A fascinating  folklore program, which initiated by Sergiu Klainerman in 80s, is to improve our understanding  of anisotropic nonlinear wave system, in particular the strength of vector field method in the case of without enough symmetry. This folklore program includes at least the following questions,
\begin{enumerate}

\item[$\bullet$] The role of   joint symmetries, which is much less than the isotropic case. In particular, how to make use of   joint symmetries even at the linear level? Due to the absence of enough joint commuting vector fields, one don't expect to have an analogue of the  Klainerman-Sobolev embedding for the anisotropic case, but is there a good linear estimate that makes the most of joint symmetries, e.g., the scaling vector field?

\item[$\bullet$] Global existence (Blow up) of    anisotropic  nonlinear wave system for small data? It would be interesting to know that  the linear estimate obtained in the above question plays a role in the nonlinear problem.   

\end{enumerate}

Although the above questions are very general and far from complete, moreover, the level of difficulty varies from case to case, there are many interesting questions can be explored. To the best knowledge of the author, the first result in this line of research is due to Otto Liess \cite{Lie89,Lie91}, which initially  suggested to him by Klainerman. Comparing with the $t^{-1 }$ decay rate in the  $3D$ isotropic case,   the   linear estimate obtained in  \cite{Lie91}[Theorem 1.3] for the anisotropic case  is only $t^{-1/2}.$ We refer readers to  \cite{Lie91}[page 2] right after ``Two difficulties then appear right away$\cdots$'' for more detailed explanation about this drastic difference. 

Recently, there is a beautiful paper by John Anderson \cite{anderson}, in which he  studies the small data global stability problem for  the \textit{2D cubic}   \textit{transversal}  anisotropic system of quasilinear wave equations. This system considered by Anderson \cite{anderson} is related to the crystal optics in group (II), i.e., biaxial crystals.

The goal of this paper is to add an important missing piece to the picture, which is the tangential case. Moreover, this is also related to the crystal optics in group (III), i.e., uniaxial crystals. As mentioned in  \cite{PWang},  ``optically uniaxial crystals are important materials used in fabricating various optical elements and optical devices$\cdots$''.

Comparing this paper with  \cite{anderson}, there are two main differences. Firstly, we study \textit{3D quadratic} systems. Lastly and most importantly, we study the \textit{tangential} case instead of the transversal case.  

 The difference between the tangential case and the transversal is drastic, subtle and technical. In terms of the space-time resonance language,  there is no space-time resonance  set for  the transversal case. However, the space-time resonance  set for the  {tangential} case is not empty. In fact, the space-time resonance  set    lies exactly in the  direction of same speed  or  the tangent hypersurface if there are more than one  direction of same speed. 

When light propagates in uniaxial crystals, there are two types of wave,  ordinary wave (o-wave) and  extraordinary wave (e-wave). The normal surfaces of these two waves are explained in Figure \ref{pic1}, see also Born-Wolf \cite{BornWolf}[section 15.3] for more physical backgrounds.
\begin{figure}[H]
  \includegraphics[width=0.5\linewidth]{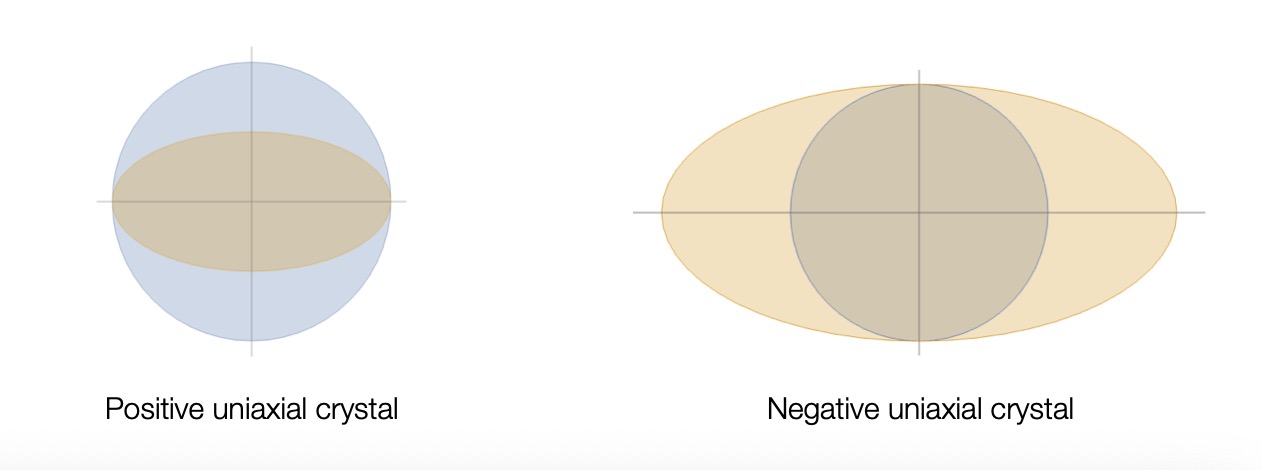}
  \caption{Normal surfaces of uniaxial crystals}\label{pic1}
\end{figure}

   We are interested in the question of   nonlinear interaction between these two types of wave. Motivated from the above discussion,  in this paper, we consider the following system of nonlinear wave (NLW) equations, 
\be\label{march8eqn1} 
\left\{\begin{array}{l}
\p_t^2 u_a-\Delta_a u_a= \mathcal{N}_a(\nabla_{t,x}u_1, \nabla_{t,x} u_2), \quad a\in\{1,2\}, \\
\Delta_1:= \p_{x_1}^2 + \p_{x_2}^2 + \p_{x_3}^2,\quad \Delta_2:= \p_{x_1}^2 + c_1^2 \p_{x_2}^2 +c_2^2 \p_{x_3}^2,\\ 
u_a=f_a,\quad \p_t u_a= g_a, \quad a=1,2,
\end{array}\right.
\ee
where either  $c_1, c_2 > 1$ or  $0< c_1, c_2 < 1$.  That's to say, $u_1$ and $u_2$ have the same speed, which is $1$, in $x_1$ direction and have different speeds in the other two directions, i.e., $x_2,x_3$ directions.

Since the goal of this paper is to study the interaction of nonlinear waves with different speeds, for concreteness of discussion,   we focus on the  $\mathcal{N}_i(\nabla_{t,x}u_1, \nabla_{t,x} u_2)$-type interaction and rule out the self-interactions of $u_1$ and $u_2$ in the nonlinearities.

From  the well-known finite time blow up result of John \cite{Joh81}, we know that certain null structure needs to be imposed to ensure the global existence of the nonlinear  solution. Since $u_1$ and $u_2$ have the same speed in $x_1$ direction, to prevent the blow up mechanism  due to lack of null structure  in $x_1$ direction, we impose a $1D$-type  null structure in $x_1$ direction.

More precisely, let
\[
\tilde{u}_i(t,x_1):=\int_{\R} \int_{\R} {u}_i(t,x_1,x_2,x_3) d x_2 d x_3,\qquad i \in\{1,2\},\quad \tilde{u}_i:\R_t\times \R\longrightarrow \R.
\]

It's expected that, which will also be clear in later $L^\infty_\xi$-norm analysis, $\nabla_{x_2,x_3}$ plays the role of null structure. Hence,  the asymptotic system of \eqref{march8eqn1} is determined the system satisfied by $\tilde{u}_i(t,x_1),i\in\{1,2\}$ as follows, 
\be\label{asympto}
\p_t^2 \tilde{u}_i(t,x_1)-\p_{x_1}^2\tilde{u}_i(t,x_1) = \mathcal{N}_i(\nabla_{t,x_1}\tilde{u}_1, \nabla_{t,x_1} \tilde{u}_2), \quad i\in\{1,2\}.
\ee

 To prevent the  blow up mechanism of the asymptotic system \eqref{asympto} because of lacking null structure,  we impose   $1D$-type null structure for  the nonlinearities  $  N_i(\nabla_{t,x_1}\tilde{u}_1, \nabla_{t,x_1} \tilde{u}_2), i=1,2.$ For simplicity, we consider $Q_{0,0}(\cdot, \cdot)$-type null structure for $N_i (\nabla_{t,x_1}u_1, \nabla_{t,x_1}u_2)$ in this paper. For $1D$ NLW with $Q_{0,0}(\cdot, \cdot)$ type null structure, the global existence is known, see Luli-Yang-Yu \cite{Luli}. 

 With  the above discussion,  we study the following system of equations, which aims to reveal the   subtle interaction of waves with different speeds in general but with the same speed in only one direction, 
\be\label{june27eqn4}
\left\{\begin{array}{l}
(\p_t^2 - \Delta_1) u_1 = \p_t u_1 \p_t u_2 - \p_{x_1} u_1 \p_{x_1}u_2  + \sum_{i,j\in\{2,3\} }\mathcal{Q}^1_{i,j}(\nabla_{x_i} u_1, \nabla_{x_j} u_2 ),   \\ 
(\p_t^2 - \Delta_2) u_2 =\p_t u_1 \p_t u_2 - \p_{x_1} u_1 \p_{x_1} u_2 + \sum_{i,j\in\{2,3\} } \mathcal{Q}^2_{i,j}(\nabla_{x_i} u_1, \nabla_{x_j} u_2 )   ,  \\ 
\Delta_1:= \p_{x_1}^2 + \p_{x_2}^2 + \p_{x_3}^2,\quad \Delta_2:= \p_{x_1}^2 + c_1^2 \p_{x_2}^2 +c_2^2 \p_{x_3}^2,\\ 
u_a=f_a,\quad \p_t u_a= g_a, \quad a=1,2.
\end{array}\right.
\ee
where $\mathcal{Q}^a_{i,j}, a\in\{1,2\}, i,j\in\{2,3\},$   are  bilinear operators with  zero order symbols $\mathfrak{q}^a_{i,j}(\cdot,\cdot),  $ s.t., the following estimate holds for any $k,k_1,k_2\in \Z$, 
\be\label{addsymbolest}
\sup_{|\alpha|, |\beta|\leq 100}\sum_{a\in\{1,2\}, i,j\in\{2,3\} } 2^{|\alpha|k_1 + |\beta| k_2 }\| \nabla_\xi^\alpha \nabla_\eta^\beta \mathfrak{q}^a_{i,j}(\xi-\eta, \eta)\psi_{k}(\xi)\psi_{k_1}(\xi-\eta)\psi_{k_2}(\eta)\|_{L^\infty_{\xi,\eta}}\lesssim 1. 
\ee

Despite we mainly work with  the  system \eqref{june27eqn4} in this paper, we remark that the Fourier method used in this paper is applicable for a larger  class of nonlinearities. 
\subsection{Previous results }

For concreteness, nonlinear wave equations  that we discuss in this subsection usually have nonlinearities depending  on the \textit{derivatives} of solution.  To better understand the state of the art,    we   provide a short history on  the global stability problem of  nonlinear waves equations and related results  for readers' convenience. Also, we refer readers to the review by  Anderson in \cite{anderson} for more discussion.

The study of the long time behavior of NLW with small initial data started a long time ago, e.g., classic works of F. John. We refer readers to the review by Klainerman in the proceeding of ICM 1983 \cite{Kla82} for more details. Despite many previous results on long time behavior, a natural question is that whether smooth solutions exist for all time for small initial data. The first work that answers such question is due to Klainerman \cite{Kla80}, in which he showed that this is true at least for space dimensions greater than or  equal to six.  Later a simplified proof was provided by Klainerman-Ponce in \cite{KlaPon83}.  In the seminal work   \cite{Kl85}, the celebrated Klainerman vector field method (as well as the Klainerman-Sobolev embedding) was introduced,  in which the symmetries    of the wave operator are exploited to prove decay estimate of the nonlinear solution. As  corollaries, small data global regularity result is improved to four and five space dimensions. Moreover, it provides another proof of the almost global result for  three space dimensions case  obtained in   Klainerman \cite{Kla83}, and John-Klainerman \cite{JohKla84}.  

Due to the critical decay rate of $3D$ wave,   from the work of John\cite{Joh81}, it's not expected to have global solution in general. In \cite{Kla82}, \textit{null condition} was introduced by Klainerman, which eliminates the parallel interaction of nonlinearity. As shown by Klainerman \cite{Kla86} and  Christodoulou \cite{Chr86},  for NLW with null structure, global stability for small data indeed holds. A great  achievement of the vector field method lies in the monumental work of  Christodoulou-Klainerman \cite{ChrKl93}, in which they proved global stability for the Einstein-Vacuum equation around the Minkowski spacetime. A crucial step in \cite{Kla86} is to identify certain kind of null condition inside the coupled system, which is different from the classic null structures.   

It turns out that the \textit{classic} null condition is sufficient  but not always necessary to ensure the  global existence  of 3D NLW for small data. As first shown by Lindblad \cite{Lin92} for the radial case and later by Alinac \cite{Ali03} for the general case, the equation $\square \phi= \phi \Delta \phi$  has global solution for small initial data.   Very interestingly, this equation doesn't have classic null structure. In nowadays language, this equation has the  so-called weak null structure. For the Einstein-vacuum equation,  
Lindblad-Rodnianski \cite{LinRod05,LinRod10} identified that  it  also has the  weak  null structure  in the wave coordinates system   which allows them to give an alternative and conceptually simpler proof of the stability of Minkowski spacetime in the wave coordinates system in \cite{LinRod10}, see also  Klainerman-Nicol\`o \cite{Klainerman15},  Bieri\cite{Bieri1}. This observation plays a key role in later study of   Einstein-scalar field systems, e.g., the study of Einstein-Maxwell system in \cite{Zipser}, the study of   Einstein-Klein-Gordon system in \cite{IP,LeFloch2,Qian}, the study of Einstein-Vlasov system in \cite{Bigorgne,FJS,taylor,Wang}.

However,   for the anisotropic system of NLW, due to different speeds,  the main mathematical challenge is that we don't enough symmetry.  The first   work that only uses the scaling vector field and rotational vector fields was due to  Klainerman-Sideris  \cite{KlaSid96}.   A refinement was introduced by  Sideris \cite{Sideris} to study the prestressed nonlinear elastic waves.  For the $2D$ transversal anisotropic system of NLW,  Anderson \cite{anderson} performed a careful analysis of the spacetime geometry together with the  vector field method.  One should expect that the geometry would be more difficult in the $3D$  case  and even more delicate for the tangential case. 
  For the system   \eqref{march8eqn1}, as a result of computations, we know that   $ S=t\p_t + x\cdot\nabla_x$  and $ L_1:= x_1\p_t + t\p_{x_1}$ commute  with the system \eqref{march8eqn1} but $\Omega_{12} $ and $\Omega_{13}$ ($\Omega_{ij}:=x_i\p_{x_j}-x_j\p_{x_i}$) don't commute  with  the system   \eqref{march8eqn1}.   Generally speaking, the set of joint symmetry varies from case to case for the study of crystal optics.  However, due to the fact that wave operators of different waves are all second order operators, we do always have the scaling vector field. How to make use of the  scaling vector field appears to be a key.

Besides the vector field method, there is another approach  for the small data global regularity type problems.  It mainly studies    nonlinear solutions on the Fourier side.  Conceptually,  the goal  is same, which is to  control  the pull back of the nonlinear solution along the linear flow  over time. At the technical level, it varies dramatically from case to case.  For the past ten years,  this approach plays an important role in the study of nonlinear dispersive PDEs and NLW. 

 Without trying to be exhaustive, we briefly mention two methods on the Fourier approach here, which are the spacetime resonance method and the $Z$-norm method.  The  spacetime resonance method was   introduced by Germain-Masmoudi-Shatah \cite{GerMasSha09} in the study of NLS. Now, it has very wide applications in the study of nonlinear dispersive equations, see e.g., \cite{GerMasSha12,GerMasSha14,IP1,IP2,IP3,IP} and  the nonlinear wave equations, see \cite{DenPus20,PusSha13}.   The designer $Z$-norm method  was firstly introduced by Ionescu-Pausader in \cite{IP2}. This method is often used together with the spacetime resonance method. It   depends essentially on identifying the “correct”  $Z$-norm (often atomic space type norm),  depending on the problem, to prove sharp  or almost sharp  decay estimates for the nonlinear solution.

Due to the internal structure of the system \eqref{june27eqn4}, it turns out that we are able to consider the small data problem for the system  \eqref{june27eqn4} purely on the Fourier side  in this paper. But  we believe that the combination of the Fourier approach and the vector field method by exploiting the benefit of the joint commuting vector fields has wider application in the study of crystal optics. This is also our interest in the    future. Using the strength of    both methods is not new, see e.g., \cite{IP,Wang}. 

\subsection{The main result }
For convenience in studying the evolution of nonlinear waves  on the Fourier side, we define the following half-waves and their corresponding profiles as follows, 
\be\label{march19eqn1}
U_a: =(\p_t - i \Lambda_a) u_a,  
\quad 
  \Lambda_a: = \sqrt{-\Delta_a},    \quad \forall a \in \{1,2\}.
\ee

Our main result is stated as follows, 
\begin{theorem}\label{maintheorem}
Let $N_0=10^5,{} \delta\in (0, 10^{-10}), \alpha=10^{-3}, (c_1,c_2)\in (0,1)\times (0,1)\cup (1,\infty)\times (1,\infty)$. There exists a small constant $\epsilon_0 \in \R_+$ such that if initial data $(f_a, g_a)\in H^{N_0}(\R^3)$ satisfies the following estimate, 
\be\label{jan15eqn1}
\sum_{a=1,2}\|U_a(0)\|_{H^{N_0}}  + \| \langle x \rangle^{1+\alpha}  U_a(0)\|_{L^2_x} +  \|\langle \xi\rangle^{8} \widehat{U}_a(0,\xi) \|_{L^\infty_\xi }\leq \epsilon_0, \quad U_a(0):= g_a -i \Lambda_a f_a.
\ee
then the system \eqref{june27eqn4} admits global solutions. The following estimate holds for the nonlinear solution, 
\be\label{finalestimate}
\sup_{t\in [0, \infty)}\sum_{a=1,2} \langle t \rangle^{-\delta } \|  U_a(t) \|_{H^{N_0}} + \|  \langle \xi\rangle^{8}  U_a(t, \xi)\|_{L^\infty_\xi} + \langle t\rangle  \|  U_a(t)\|_{W^{4,\infty}}\lesssim \epsilon_0.
\ee
Moreover, the nonlinear solution of \eqref{june27eqn4} scatters to a linear solution in $L^\infty_\xi$-space as time goes to infinity. 
 
\end{theorem}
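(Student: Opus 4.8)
The plan is to run a bootstrap (continuity) argument entirely on the Fourier side, in the spirit of the space–time resonance and $Z$-norm methods, controlling the profiles $V_a(t):=e^{it\Lambda_a}U_a(t)$, $a\in\{1,2\}$. Writing $U_a^+:=U_a$, $U_a^-:=\overline{U_a}$, reconstructing $u_a=\tfrac{i}{2\Lambda_a}(U_a^+-U_a^-)$ and $\p_t u_a=\tfrac12(U_a^++U_a^-)$, and applying Duhamel's formula to \eqref{june27eqn4}, one obtains, schematically,
\[
\widehat{V_a}(t,\xi)=\widehat{U_a}(0,\xi)+\sum_{\iota_1,\iota_2\in\{+,-\}}\int_0^t\!\!\int_{\R^3}e^{is\Phi^{a}_{\iota_1\iota_2}(\xi,\eta)}\,\mathfrak{m}^{a}_{\iota_1\iota_2}(\xi-\eta,\eta)\,\widehat{V^{\iota_1}_1}(s,\xi-\eta)\,\widehat{V^{\iota_2}_2}(s,\eta)\,d\eta\,ds,
\]
where $V^+=V$, $V^-=\overline{V}$, the phase is $\Phi^{a}_{\iota_1\iota_2}(\xi,\eta)=\Lambda_a(\xi)-\iota_1\Lambda_1(\xi-\eta)-\iota_2\Lambda_2(\eta)$, and the symbol $\mathfrak{m}^{a}_{\iota_1\iota_2}$ is \emph{bounded}: its $Q_{0,0}$ part equals $\tfrac14\big(1-\iota_1\iota_2(\xi-\eta)_1\eta_1/(\Lambda_1(\xi-\eta)\Lambda_2(\eta))\big)$ and its transversal part is $\mathfrak{q}^{a}_{i,j}(\xi-\eta,\eta)\,(\xi-\eta)_i\eta_j/(\Lambda_1(\xi-\eta)\Lambda_2(\eta))$ with $i,j\in\{2,3\}$. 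I would bootstrap, for $t\in[0,T]$,
\[
\mathcal{E}(t):=\sum_{a=1,2}\Big(\langle t\rangle^{-\delta}\|U_a(t)\|_{H^{N_0}}+\langle t\rangle^{-\delta}\big\|\langle x\rangle^{1+\alpha}U_a(t)\big\|_{L^2_x}+\|\langle\xi\rangle^{8}\widehat{V_a}(t,\xi)\|_{L^\infty_\xi}+\langle t\rangle\|U_a(t)\|_{W^{4,\infty}}\Big),
\]
assuming $\mathcal{E}(t)\le\epsilon_0^{2/3}$ and improving it to $\mathcal{E}(t)\lesssim\epsilon_0$ on every $[0,T]$; together with local well-posedness in $H^{N_0}$ this gives global existence and \eqref{finalestimate}. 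The weighted $L^2$ norm encodes roughly one frequency-derivative of the profile, and — since $S=t\p_t+x\cdot\n_x$ and $L_1=x_1\p_t+t\p_{x_1}$ commute with \eqref{june27eqn4}, so that $SV_a,L_1V_a$ obey analogous estimates — it is what legitimizes the integrations by parts in $\eta$ below.

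\emph{Energy and pointwise decay.} Commuting $\p_x^\gamma$ ($|\gamma|\le N_0$), as well as $S$ and $L_1$, with \eqref{june27eqn4} and using the standard energy identity, $\tfrac{d}{dt}\|U_a(t)\|_{H^{N_0}}^2$ is controlled by trilinear terms in which the factor carrying the fewest derivatives ($\le N_0-4$ of them) goes into $W^{4,\infty}$, where it decays like $\epsilon_0^{2/3}\langle s\rangle^{-1}$, and the other two into $L^2$; Gronwall then yields $\|U_a(t)\|_{H^{N_0}}+\|\langle x\rangle^{1+\alpha}U_a(t)\|_{L^2}\lesssim\epsilon_0\langle t\rangle^{C\epsilon_0^{2/3}}\ll\epsilon_0\langle t\rangle^{\delta}$. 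Note the null structure is \emph{not} needed here: the sharp $\langle t\rangle^{-1}$ decay already makes the time integral only logarithmically divergent. For the decay itself, the key geometric fact is that the characteristic surface of $\Lambda_1$ is a sphere and that of $\Lambda_2$ the ellipsoid $\xi_1^2+c_1^2\xi_2^2+c_2^2\xi_3^2=\mathrm{const}$, both with everywhere non-vanishing Gaussian curvature — in contrast to the biaxial case of \cite{Lie91,anderson}, where conical degeneracies of the Fresnel surface only permit $t^{-1/2}$. A fixed-time stationary-phase estimate therefore gives $\|e^{-it\Lambda_a}g\|_{W^{4,\infty}}\lesssim\langle t\rangle^{-1}\|g\|_{Y}$ for a norm $Y$ controlled by $\|\langle\xi\rangle^{8}\widehat g\|_{L^\infty_\xi}$ together with the weighted-$L^2$ and $H^{N_0}$ norms; applied to $g=V_a(t)$ it yields $\|U_a(t)\|_{W^{4,\infty}}\lesssim\epsilon_0\langle t\rangle^{-1}$ (the sharp power following from a refinement using that the uniform $L^\infty_\xi$ bound prevents frequency concentration; a $\langle t\rangle^{-1+C\delta}$ bound, which already closes the bootstrap, is immediate).

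\emph{The $L^\infty_\xi$ estimate — the core.} One computes $\n_\eta\Phi^{a}_{\iota_1\iota_2}=\iota_1\n\Lambda_1(\xi-\eta)-\iota_2\n\Lambda_2(\eta)$; since $\n\Lambda_1$ is a unit vector while $|\n\Lambda_2(\eta)|^2=(\eta_1^2+c_1^4\eta_2^2+c_2^4\eta_3^2)/\Lambda_2(\eta)^2$, and $c_1^2(c_1^2-1)\eta_2^2+c_2^2(c_2^2-1)\eta_3^2$ has a fixed sign under the hypothesis on $(c_1,c_2)$, the group velocities can coincide only when $\eta_2=\eta_3=0$, and then only when $\xi-\eta\parallel e_1$ as well. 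Hence the space–time resonant set $\{\Phi=0,\ \n_\eta\Phi=0\}$ is exactly $\{\xi\parallel e_1,\ \eta\parallel e_1\}$ for the admissible sign pairs, confirming the assertion in the Introduction. On this set the transversal symbols vanish because the factors $(\xi-\eta)_i,\eta_j$ with $i,j\in\{2,3\}$ do; and, since $\Lambda_1,\Lambda_2$ both reduce to $|\,\cdot_1|$ on the $e_1$-axis, the $Q_{0,0}$ symbol $\tfrac14\big(1-\iota_1\iota_2\,\mathrm{sign}((\xi-\eta)_1\eta_1)\big)$ vanishes precisely for the admissible (i.e. resonant) sign combinations — this is the $1D$-type null cancellation built into \eqref{june27eqn4}. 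After Littlewood–Paley localization $|\xi|\sim2^k$, $|\xi-\eta|\sim2^{k_1}$, $|\eta|\sim2^{k_2}$ and a dyadic splitting $s\sim2^m$ (and a routine treatment of low frequencies), the goal is a gain $2^{-\delta_0 m}2^{-8k^+}$ in each piece, so that the sum over $m$ converges with no $\langle t\rangle^{\delta}$ loss.

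\emph{The three regimes and the main obstacle.} Away from the space resonance ($|\n_\eta\Phi|\gtrsim2^{-p}$ for a threshold $p$ to be chosen) I would integrate by parts in $\eta$, using the symbol bounds \eqref{addsymbolest} and the weighted-$L^2$/$S$/$L_1$ norms to absorb derivatives falling on the profiles; finitely many such steps beat the $\eta$-volume and produce the required gain, one profile going into $L^\infty_\xi$ and the other into $L^2_\eta$ (or $L^1_\eta$). Near the space resonance but away from the time resonance ($\xi,\eta$ within angle $\lesssim2^{-p}$ of $e_1$ yet $|\Phi|\gtrsim2^{-q}$, which in particular covers every non-admissible sign pair near the axis), integrate by parts in $s$, the $\p_s$ hitting the profiles regenerating the quadratic nonlinearity together with the energy and decay norms. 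In the genuinely space–time resonant regime ($\xi,\eta$ within angle $\lesssim2^{-p}$ of $e_1$ \emph{and} $|\Phi|\lesssim2^{-q}$), neither integration by parts is efficient, but there the null symbol is $O(2^{-p'})$ by the vanishing above and the $\eta$-region is thin; Taylor-expanding the phase near $e_1$ — $\Lambda_1(\xi)\approx|\xi_1|+|\xi'|^2/(2|\xi_1|)$, $\Lambda_2(\eta)\approx|\eta_1|+(c_1^2\eta_2^2+c_2^2\eta_3^2)/(2|\eta_1|)$, so that in the transversal variable $\eta'$ the phase is a \emph{nondegenerate} quadratic form whose Hessian is distinguished by $c_1,c_2\neq1$ — one runs a stationary-phase / $TT^*$ argument in $\eta'$ which, combined with the null gain, closes the estimate. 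The delicate point, and the main obstacle, is that the group velocities $\n\Lambda_1(\xi-\eta)$ and $\n\Lambda_2(\eta)$ are merely \emph{close}, not equal, throughout a full neighborhood of the $e_1$-axis, so the region where $\eta$-integration by parts fails is not lower-dimensional but a thin neighborhood whose width must be tracked precisely against the null-form gain, the time factor $2^{-m}$, and the quadratic degeneracy of the phase; the correct tuning of the thresholds $p,q,p'$ across the frequency regimes is the technical heart of the proof. Finally, with the surplus $2^{-\delta_0 m}$ in hand, $\int_T^\infty\!\!\int(\cdots)\,d\eta\,ds\to0$ in $L^\infty_\xi$ as $T\to\infty$, so $\widehat{V_a}(t,\cdot)$ is Cauchy in $L^\infty_\xi$; its limit $\widehat{V_a^\infty}$ is the scattering data and $U_a(t)-e^{-it\Lambda_a}V_a^\infty\to0$ in $L^\infty_\xi$, which is the asserted scattering.
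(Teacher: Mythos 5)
Your outline does follow the same overall strategy as the paper: a Fourier-side bootstrap on the profiles with an $H^{N_0}$ component, an $L^\infty_\xi$ component and a fractionally weighted $L^2$-type component, a linear decay estimate that reduces pointwise decay to exactly these norms, identification of the space-time resonance set as $\{\xi\parallel e_1,\ \eta\parallel e_1\}$ with the $1D$ null symbol vanishing there, and the three-regime splitting (integration by parts in $\eta$ away from space resonance, normal form away from time resonance, phase expansion near the axis). The problem is that the proposal stops short precisely at what you yourself call the technical heart, and the one concrete claim you make about that regime is wrong. In the frequency-comparable case with $\mu\nu=-$, the transverse quadratic form is \emph{not} uniformly nondegenerate: by the expansion \eqref{2023jan29eqn61} its coefficients carry the factors $1+\mu\nu c_i^2|\xi_1-\eta_1|/|\eta_1|$, which can vanish when $|\eta_1|\approx c_i^2|\xi_1-\eta_1|$, so the phase can be anomalously small on a set that is not controlled by your ``nondegenerate Hessian'' stationary-phase/$TT^\ast$ argument. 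Handling exactly this cancellation is why the paper introduces the sign-dependent partition \eqref{july7eqn51}, the additional localizations in $|\xi_2|/|\xi|$, $|\xi_3|/|\xi|$ and in the size of $\Phi$ in subsection \ref{frecompcase}, and the super-localized (thin-annulus) decay estimate of Lemma \ref{lineardecay}; your plan offers no substitute, and ``correct tuning of thresholds $p,q,p'$'' is not an argument.

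Two further points as written would not survive scrutiny. First, the weighted norm must be carried by the profile ($h_a=e^{it\Lambda_a}U_a$), not by $U_a(t)$: for the free flow $\|\langle x\rangle^{1+\alpha}U_a(t)\|_{L^2}$ grows like $t^{1+\alpha}$, so the quantity $\mathcal{E}(t)$ as you defined it cannot be propagated with only a $\langle t\rangle^{\delta}$ loss (the paper's $Z$-norm \eqref{weightednormspace} is an atomic weighted norm of the profile, and even for the profile only the fractional weight $1+\alpha$ is propagated, precisely because of the nonempty resonance set). Second, the appeal to commuting $S$ and $L_1$ and to ``analogous estimates'' for $SV_a,L_1V_a$ is unsubstantiated: nothing in your sketch shows these additional norms close under the bootstrap, and the paper's argument deliberately does without them, legitimizing the $\eta$-integrations by parts through the spatial localization $Q_{j,k}$ of the profiles alone. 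Relatedly, the parenthetical claim that a $\langle t\rangle^{-1+C\delta}$ decay ``already closes the bootstrap'' is inconsistent with your own $\mathcal{E}$, whose $L^\infty_\xi$ and $\langle t\rangle\|U_a\|_{W^{4,\infty}}$ components are required uniformly in time; the paper closes the $L^\infty_\xi$ part only because Proposition \ref{Zest} yields the integrable rate $2^{-m-\delta m}$ for $\partial_t\widehat{h_a}$, which again rests on the resonant-case analysis you have deferred.
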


A few remarks are in order.

\begin{remark}
For the purpose of studying the anisotropic system of NLW in larger context, e.g., with variable coefficients or in a curved spacetime,  it would be interesting to identify a good set of first order pseduo-differential operators  and   second order pseduo-differential operators from the study of this paper, which allows us to prove sharp (or almost sharp) decay estimates for the nonlinear solution. This will be our future project. Moreover, it's also our interest to   study the $3D$ anisotropic system of waves with the same speed along two different directions and with different speeds along the other direction. 
\end{remark}
\begin{remark}
 Since the method we use here  only depends on the properties of symbols of nonlinearities, the above theorem also holds for a class of anisotropic systems of NLW. Moreover, to run the argument presented in this paper, we don't need null structure as strong as the null structure presented in \eqref{june27eqn4}.

\end{remark}
 
\begin{remark}
For the purpose of simplifying the presentation of theorem, we choose to use a  weighted $L^2$-type space for the initial data $U_a(0),a\in\{1,2\}$, which is stronger than necessary. From our later bootstrap assumption \eqref{bootstrap}, it would be sufficient if the   initial data $U_a(0),a\in\{1,2\}$ lies in the atomic  $Z$-normed space defined in \eqref{weightednormspace}. 
\end{remark}
\begin{remark}
Last and also the least, the plausible  goal of optimizing $N_0,\alpha$, etc., is not pursued here. 
\end{remark}
\subsection{Main ideas of proof and the plan of this paper}

Due to the critical $1/t$ decay rate of the $3D$ wave equation, by using the standard energy estimate, the small data global regularity problem is reduced to prove that the \textit{nonlinear} solution indeed decays sharply over time. To this end, in the following, we discuss main steps and  main ideas in each step. 

$\bullet$\qquad \textit{Step (i)}\qquad The pointwise linear decay estimate. 

Since the small data global existence problem has been a subject under active investigation for many years, there are many  linear decay estimates available in literature. We don't try to give a complete list here. Instead, we give two representative estimates. Firstly, we have the celebrated Klainerman-Sobolev embedding, which states as follows in $3D$, 
\[
|u(t,x)| \lesssim \sum_{|\alpha|\leq 2} \sum_{\Gamma\in \{S, \Omega_{ij}, L_i\}} \frac{1}{(1+|t|+|x|)(1+||t|-|x||)^{1/2} } \| \Gamma^\alpha u(t,\cdot)\|_{L^2_x}. 
\]
Moreover, we also have the following linear decay  estimate by Pusateri-Shatah \cite{PusSha13}[Lemma A.1], which doesn't use any vector field, 
\[
\| e^{i t\d} f \|_{L^\infty} \lesssim \frac{1}{t}\| \langle x\rangle \d^2 f\|_{L^2}^{1/2}\| \langle x\rangle^2 \d^2 f\|_{L^2}^{1/2}, \quad \d:=\sqrt{-\Delta_1}.
\]

Generally speaking, choosing what kind of decay estimate to work with is the very part of  whole argument. Since one still needs to  control the right hand side  over time to show that the nonlinear solution indeed decays sharply over time. Whether such a goal can be achieved depends on the nonlinear system itself. 

For the anisotropic system \eqref{june27eqn4}, since we don't have the full set of vector fields and it doesn't look promising to propagate a very high order weight for the profile due to the presence of the nonempty spacetime resonance set, we can't use the above two estimates directly. Instead, roughly speaking, we use the following linear decay estimate, 
\be\label{jan15eqn11}
\textup{(Rough version)}  \qquad  \| e^{ \pm i t\Lambda_a } f \|_{L^\infty} \lesssim \frac{1}{t}\big[\| |\xi|^2 \widehat{f}(\xi)\|_{L^\infty_\xi} + \langle t\rangle^{-\beta} \| \langle x\rangle^{1+} \d^{(3/2-\beta) +} f\|_{L^2}  \big],
\ee
where $a\in \{1,2\}$ and  $\beta$ is some absolute constant, see Lemma \ref{lineardecay} for the precise estimate.   

$\bullet$\qquad \textit{Step (ii)}\qquad Reduction to the estimates of profiles.  

To obtain   decay estimate of the nonlinear solution, we pull back the nonlinear solution along linear flow, which is called the profile, as follows
\be\label{jan16eqn1}
\forall a\in \{1,2\}, \quad h_a(t) = e^{it \Lambda_a} U_a, \quad \Longrightarrow U_a(t) = e^{-it \Lambda} h_a(t). 
\ee
From the linear decay estimate \eqref{jan15eqn11},  the goal of sharp decay is reduced as follows, 
\be\label{jan15eqn12}
\textup{(Rough version)}\quad   \| U_a(t) \|_{L^\infty} \lesssim \frac{1}{t}\big[ \underbrace{\| |\xi|^2 \widehat{h_a}(t,\xi)\|_{L^\infty_\xi}}_{\text{doesn't grow over time }} + \langle t\rangle^{-\beta} \underbrace{\| \langle x\rangle^{1+} \d^{(3/2-\beta) +} f\|_{L^2}}_{\text{only grow slightly over time }}  \big]. 
\ee

The weighted $L^2$-type space we actually use is the atomic type ($Z$-norm space). More precisely, we view the profile as a superposition of different atoms, which have different scales of localization in both the  frequency space and the physical space, see \eqref{weightednormspace} for the detailed definition. This type of $Z$-norm space was firstly introduced by Ionescu-Pausader \cite{IP2} in the study of Klein-Gordon equations.

We remark that,  we reluctantly use a fractional weighted space because it  doesn't promising to propagate $\langle x\rangle^{2}$ for the profiles over time due to the nonempty spacetime resonance set, which will be discussed in the next step.

$\bullet$\qquad \textit{Step (iii)}\qquad Exploiting properties of the  spacetime resonance set. 

As mentioned in the previous results, we refer readers to \cite{GerMasSha09,GerMasSha12,GerMasSha14,IP1,IP2,IP3,IP} for more details on the spacetime resonance method. At large scale, the methodology is robust. However, at technical level, difficulties  vary from case to case. 

Since $(c_1,c_2)\in (0,1)\times (0,1)\cup (1,\infty)\times (1,\infty)$, we know that the group velocities of $u_1$ and $u_2$ are different as long as one of their frequencies is not parallel to $(1,0,0)$, which is the direction of two characteristic hypersurfaces tangential to each other. More precisely, the  spacetime resonance set is contained in the following set, 
\[
\{(\xi, \eta): \xi=\lambda(1,0,0), \eta= \mu (1,0,0), \lambda, \mu\in \R\}. 
\]

It's  the above structure motivates to consider the asymptotic system \eqref{asympto}. Moreover, we know that $(\xi_2, \xi_3)$ and $ (\eta_2, \eta_3)$ are very small when close to the spacetime resonance set. Therefore, one expects that $\nabla_{x_2, x_3}$ play the role of null structure. 

Moreover, because of this observation, the dyadic decomposition we do for the frequency is   also anisotropic. More precisely, besides the size of $|\xi|$, we  also localize the size of $| \xi_2| $ and $|\xi_3|$. 

 $\bullet$\qquad \textit{Step (iv)}\qquad Splitting on the Fourier side. 

To control the $L^\infty_\xi$-norm and the weighted $L^2$-type norm, we use the Duhamel's formula. The question is reduced to control bilinear forms in different spaces. 
 Because of the structure   of the  spacetime resonance set, we split into the non-resonance case and the resonance case. Thanks to the null structure presented in the system \eqref{june27eqn4}, it enables us to choose the threshold of splitting on the Fourier side  not too small. However, due to the $1+\alpha$ order weight we are propagating for the $Z$-norm, the threshold appears to be critical. 

For the resonance case, we obtain  fine expansion formulas   for the phases, see  \eqref{2023jan29eqn61}, which are crucial. The time resonance set is very delicate due to the possible cancellation because of the opposite sign in the leading term in \eqref{2023jan29eqn61}. Fortunately, this happens only in the frequency comparable case. For this case, we will do more sophisticated localization  in subsection \ref{frecompcase}. Roughly speaking, the key observation for the worst scenario is that  the   frequency is localized inside a very thin annulus instead of a normal  annulus in which the difference between two radiuses is comparable to   radiuses. 

  For the   non-resonance case, we perform normal form transformation if away from the time resonance set and exploiting the difference of group velocities if away from the space resonance set.

The rest of this paper is organized as follows. 
 \begin{enumerate}
\item[$\bullet$] In section \ref{pre}, we introduce   notation and the bootstrap assumption and finish the bootstrap argument with a divided plan.
\item[$\bullet$] In section \ref{Z}, we finish the energy estimate part and     the $L^\infty$-norm estimate part. Moreover, we prove a fixed time $Z$-norm estimate for   nonlinearities, which play a role in later normal form transformation.  
\item[$\bullet$] In section \ref{W}, we finish   the $Z$-norm estimate part.  

\end{enumerate}

 \vo 

\noindent \textbf{Acknowledgment}\qquad   This problem was  introduced to me by my colleague Pin Yu, to him, I am very grateful.  I believe this problem was introduced to him by Sergiu Klainerman. The author thanks Pin Yu for many helpful discussions.   The author acknowledges  support from   NSFC-12141102  and MOST-2020YFA0713003.

\section{A bootstrap argument with a divided plan}\label{pre}

The plan of this section is listed as follows. Firstly, we  introduce the notation used throughout this paper. Secondly, we  state our main bootstrap assumptions and then obtain the $L^\infty$-decay estimate of nonlinear solutions under the bootstrap assumption by using the    linear decay estimate in Lemma \ref{lineardecay}. Moreover, a $L^1_{}$-estimate of   kernel for general Fourier symbol is also provided in Lemma \ref{kernelest}.   Lastly, we finish the bootstrap argument, hence finishing the proof of our main theorem \ref{maintheorem}, by assuming the validity of divided propositions in later sections. 
\subsection{Notation}

For $\xi=(\xi_1, \xi_2, \xi_3)\in \R^3$, we let $\slashed \xi:=(\xi_2,\xi_3)\in \R^2. $ 
For any $x\in \cup_{n\in\Z_+}\R^n$, we use the Japanese bracket $\langle x \rangle$ to denote $(1+|x|^2)^{1/2}$. 

  We  fix an even smooth function $\tilde{\psi}:\R \rightarrow [0,1]$, which is supported in $[-3/2,3/2]$ and equals to ``$1$'' in $[-5/4, 5/4]$. For any $k\in \mathbb{Z}$, we define the cutoff functions $\psi_k, \psi_{\leq k}, \psi_{\geq k}:\cup_{n\in \Z_+}\R^n\longrightarrow \R$ as follows, 
\[
\psi_{k}(x) := \tilde{\psi}(|x|/2^k) -\tilde{\psi}(|x|/2^{k-1}), \quad \psi_{\leq k}(x):= \tilde{\psi}(|x|/2^k)=\sum_{l\leq k}\psi_{l}(x), \quad \psi_{\geq k}(x):= 1-\psi_{\leq k-1}(x).
\]

Let $P_k$, $P_{\leq k}$, and $P_{\geq k}$ be the Fourier multiplier operators with symbols $\psi_{k}(\xi),\psi_{\leq k}(\xi), $ and $\psi_{\geq k}(\xi)$ respectively.  We use $f^{+}$ to denote $f$ and use $f^{-}$ to denote $\bar{f}$. For $k\in\Z$, we use $f_k$ to denote $P_k f$.

  For an integer $k\in\mathbb{Z}$, we use $k_{+}$ to denote $\max\{k,0\}$ and  use $k_{-}$ to denote $\min\{k,0\}$.    For any unit vectors $u, v\in \mathbb{S}^2$, we use $\angle(u, v)$ to denote the angle between $u$ and $v$ and use the convention that   $\angle(u, v)\in[0,\pi] $.   For any $k\in \Z$, we define the following set of integers, 
\be\label{2020april10eqn1}
\begin{split}
\chi_k^1&:=\{(k_1,k_2): k_1, k_2\in \Z, |k_1-k_2|\leq 10, k\leq k_1+10\}, \\
\chi_k^2&:=\{(k_1,k_2): k_1, k_2\in \Z, k_1\leq k_2-10,  |k-k_2|\leq 5\}, \\
\chi_k^3&:=\{(k_1,k_2): k_1, k_2\in \Z, k_2\leq k_1-10,  |k-k_1|\leq 5\}.
\end{split}
\ee

For any $k\in\Z, j\in \Z_+ $, we define the   cutoff function  $\varphi_{j,k}:\cup_{n\in \Z_+}\R^n\longrightarrow \R$ as follows, 
\be\label{cutoffnon}
\varphi_{j,k}= \left\{\begin{array}{ll}
\psi_{\leq -k}(x) & j= -k_{-}\\ 
\psi_j(x) & j>  -k_{-}. 
\end{array}\right. 
\ee
Correspondingly, we define  the localization operators $Q_{j,k}$, $Q_{\leq j,k}$   and  $Q_{> j,k}$ as follows, 
\[
Q_{j,k} f(x):= P_k\big(\varphi_{j,k}(\cdot) P_kf(\cdot)\big) (x), \quad Q_{\leq j,k}  f(x):= \sum_{j'\in   [-k_{-}, j]\cap\Z } Q_{j' ,k} f(x), \quad  Q_{> j,k}:= P_k-Q_{\leq j,k} . 
\]
From \eqref{cutoffnon}, the following partition of unity holds, 
\[
Id=\sum_{k\in \Z} P_k, \quad P_k=\sum_{j\in  [-k_{-}, \infty)\cap\Z } Q_{j,k}
\]
For convenience, we use $f_{j,k}$,  $f_{\leq j, k}$, and $f_{> j, k}$ to abbreviate $Q_{j,k} f$,  $Q_{\leq j,k} f$, and  $Q_{ >j,k} f$  respectively. 

With the above notation, we define the following weighted atomic   $Z$-normed space as follows, 
\be\label{weightednormspace}
\|f \|_{Z}:=\sup_{k\in \Z} \sup_{j\in [-k_{-}, \infty)\cap\Z}  2^{(1+\alpha)j} \|Q_{j,k} f(x)\|_{L^2_x}, \qquad \alpha:=10^{-3}.
\ee

To measure the distance of $|c_1|,|c_2|$ with respect to $1$, we define
\be\label{2023jan28eqn1}
\begin{split}
c  & :=\min\big\{\inf\big\{k: k\in \Z ,  2^k\geq \min\{|1-|c_1||, |1-|c_2||\}\big\}, 0 \big\},  \\ 
 \tilde{c} & := \max\{\inf\big\{k: k\in \Z ,  2^k\geq \max\{ |c_1| ,  |c_2| \}\big\}, 0\}, \qquad c \leq 0 \leq  \tilde{c}.\\ 
\end{split}
\ee

We remark that   constants ``$c$'' and $\tilde{c}$ will not play much role in the argument. Since we will measure the difference between different waves in $x_2, x_3$ directions, it's convenient to use these constants to measure the lower bound and the upper bound.

\subsection{The bootstrap assumption}

From \eqref{march19eqn1}  and \eqref{jan16eqn1}, we can recover $u_a, a\in\{1,2\},$ from the half-waves $U_a, a\in\{1,2\},$ as follows, 
\be\label{march19eqn2}
\p_t u_a = \sum_{\mu\in\{+,-\}} \frac{1}{2} U_a^\mu, \quad u_a = \sum_{\mu\in\{+,-\}} c_{\mu}\Lambda_a^{-1} U_a^\mu, \quad c_{\mu} = \frac{\mu}{2i}, \quad U_a^+:=U_a, \quad U_a^{-}:=\overline{U_a}.
\ee

Recall (\ref{june27eqn4}) and (\ref{march19eqn1}). On the Fourier side, after doing dyadic decomposition for $\xi-\eta$ and $\eta$,  we have 
\be\label{systemeqnprof}
\begin{split}
\p_t \widehat{h_a}(t, \xi)&= \sum_{\mu, \nu \in \{+,-\}}\sum_{k\in \Z} \sum_{(k_1,k_2)\in \chi_k^1\cup \chi_k^2\cup\chi_k^3}  \mathcal{J}_{k;k_1,k_2}^{a;\mu,\nu}(t, \xi),\quad a \in \{1,2\}, \\ 
 \mathcal{J}_{k;k_1,k_2}^{a;\mu,\nu}(t, \xi)&:=  \int_{\R^3} e^{i t \Phi_{\mu, \nu}^a ( \xi ,\eta)} q^a_{\mu, \nu} (\xi-\eta, \eta)\widehat{h_1^{\mu}}(t, \xi-\eta) \widehat{h_2^{\nu}}(t, \eta) \psi_k(\xi)\psi_{k_1}(\xi-\eta)\psi_{k_2}(\eta)  d\eta, 
\end{split}
\ee
where  the oscillating  phases $\Phi_{\mu, \nu}^a ( \xi ,\eta), a\in \{1,2\}$,  are defined as follows, 
\be\label{phases}
\begin{split}
\Phi_{\mu, \nu}^a ( \xi ,\eta)&:= \Lambda_a(\xi) - \mu\Lambda_1(\xi-\eta)-\nu \Lambda_2(\eta), \\ 
\forall \xi \in \R^3, \quad \Lambda_1(\xi)&:=\sqrt{\xi_1^2 + \xi_2^2 + \xi_3^2}, \quad \Lambda_2(\xi):=\sqrt{\xi_1^2 + c_1^2 \xi_2^2 + c_2^2\xi_3^2}. 
\end{split}
\ee
  and symbols $q^a_{\mu, \nu} (\xi-\eta, \eta),\mu, \nu \in \{+,-\}, a\in \{+,-\},$ are given as follows,
\be\label{june21eqn51}
 q^a_{\mu, \nu} (\xi-\eta, \eta) = \frac{1}{4} + c_{\mu} c_{\nu} \frac{(\xi_1-\eta_1)}{\Lambda_1(\xi-\eta)}\frac{\eta_1}{\Lambda_2( \eta)} - \sum_{a\in\{1,2\}, i,j\in\{2,3\}} \mathfrak{q}^a_{i,j}(\xi-\eta, \eta)\frac{c_{\mu} c_{\nu} (\xi_i-\eta_i)\eta_j }{\Lambda_1(\xi-\eta) \Lambda_2( \eta)}.
\ee

Let $\delta\in (0,10^{-10}) $ be a small absolute constant.  We   make the following   bootstrap assumption, 
\be\label{bootstrap}
\begin{split}
 \sup_{t\in[0, T]}\sum_{a=1,2} \langle  t \rangle^{-\delta }&   \| h_a(t)\|_{H^{N_0}}      + \langle  t \rangle^{-\delta } \| h_a(t)\|_{Z} + \sup_{k\in \Z} 2^{8k_+} \|\widehat{h_a}(t,\xi)\psi_k(\xi)\|_{L^\infty_\xi}  \lesssim \epsilon_1:=\epsilon^{5/6}_0, 
\end{split}
\ee
where  the  $Z$-normed space was defined in \eqref{weightednormspace}.

\subsection{ $L^\infty_x$-decay estimate  under the bootstrap assumption}

Let $t\in [2^{m-1}, 2^m]\subset [0, T]$.   
From the Sobolev embedding and the bootstrap assumption \eqref{bootstrap}, we can rule out the case $k\notin [-m/2, m/(N_0-12)] $ as follows, 
\be\label{decayrough}
\begin{split}
& \sum_{k\in \Z, k\notin [-m/2, m/(N_0-12)] } 2^{-  k +5k_+ }\| e^{-i t\Lambda_a} P_k(h_a(t))\|_{L^\infty} \\
&\lesssim \sum_{k\in \Z, \notin [-m/2, m/(N_0-12)] } 2^{- k+  5k_+ } \min\{2^{3k}\|\widehat{h_a}(t, \xi)\psi_k(\xi)\|_{L^\infty_\xi }, 2^{3k/2} \|\widehat{h_a}(t, \xi)\psi_k(\xi)\|_{L^2_\xi }  \}\\
&\lesssim 2^{-m} \epsilon_1. 
\end{split}
\ee

It remains to consider the case $k\in  [-m/2, m/(N_0-12)]\cap \Z$. For this case,  from the linear decay estimates  \eqref{linearwavedecay}   in Lemma \ref{lineardecay}, we have 
\be\label{decayprof}
 \| e^{-i t\Lambda_i} P_k(h_a(t))\|_{L^\infty}\lesssim 2^{-m+   k  -5k_+}  \epsilon_1.
\ee
To sum up, after combining the obtained estimates \eqref{decayrough} and \eqref{decayprof}, we have
\be\label{decayestimatefinal}
\sup_{t\in [0, T]}\sum_{k\in \Z, a=1,2} 2^{- k + 5k_+} \langle t \rangle  \| e^{-i t\Lambda_a} P_k(h_a(t))\|_{L^\infty}\lesssim \epsilon_1.
\ee

In later study of the High $\times$ High type interaction, we will localize    $\xi$ inside a thin annulus. Hence, in the following Lemma, we provide a super-localized version of the linear decay estimate.

\begin{lemma}\label{lineardecay}
Let $\delta\in (0, 10^{-10})$ be some fixed absolute constant,  $a\in\{1,2\}, t\in [2^{m-1}, 2^m]\subset \R, x\in \R^3, \mu \in\{+,-\}$, $k\in\mathbb{Z}$,   $n,\tilde{k}\in \mathbb{Z}$, s.t., $|t|\gg 1$,  $\tilde{k}\leq k$, $k, \tilde{k}\geq - m + \delta m $, $n\in [2^{k-10-\tilde{c} }, 2^{k+10+\tilde{c}}]\cap \Z $, we have 
\be\label{linearwavedecay}
\begin{split}
 &  \big|\int_{\R^3}  e^{ i x\cdot \xi-i \mu t  \Lambda_a(\xi)   }     m(\xi) \widehat{f}(\xi)  \psi_{\tilde{k} }(\Lambda_a(\xi)- n )  d \xi \big| \lesssim   2^{-m}\|m(\xi)\|_{\mathcal{S}^\infty_k}  \big[ 2^{ k+\tilde{k}+k_{+}}\|\widehat{f}(\xi )  \psi_k(\xi)   \|_{L^\infty_\xi }    \\ 
&\quad   +   2^{- (\alpha-4\delta)  m/2+(\alpha+\delta) k /2}2^{(2k +\tilde{k})/2 } \big(\sup_{j\in[-k_{-},\infty)\cap \Z} 2^{(1+\alpha)j} \|T^a_{\tilde{k},n} Q_{j,k}f \|_{L^2}\big) \big],
  \end{split}
\ee
where the Fourier multiplier operator $T^a_{\tilde{k},n}$ is defined by the symbol  $\psi_{\tilde{k} }(\Lambda_a(\xi)- n )$.

\end{lemma}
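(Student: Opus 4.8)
The plan is to prove the super-localized linear decay estimate \eqref{linearwavedecay} by a stationary phase / dyadic time-frequency analysis of the oscillatory integral $I(x):=\int_{\R^3} e^{ix\cdot\xi - i\mu t \Lambda_a(\xi)} m(\xi)\widehat f(\xi)\psi_{\tilde k}(\Lambda_a(\xi)-n)\,d\xi$. First I would decompose the profile $f$ in the physical space into its spatial pieces $Q_{j,k}f$, so that $\widehat f(\xi)\psi_k(\xi) = \sum_{j\ge -k_-}\widehat{Q_{j,k}f}(\xi)$, and estimate the contribution of each $j$ separately, summing at the end. For the \emph{low} spatial scales $j$ with $2^j \lesssim 2^{m}2^{-k}$ (the ``bulk'' where the spatial localization does not beat the oscillation), I would use the trivial bound $|I_j(x)| \lesssim 2^{3k}\cdot 2^{\tilde k - k}\cdot\|m\|_{\mathcal S^\infty_k}\|\widehat{Q_{j,k}f}\|_{L^\infty_\xi}$ coming from the volume of the support (a ball of radius $\sim 2^k$ intersected with the annulus $|\Lambda_a(\xi)-n|\lesssim 2^{\tilde k}$, which has measure $\sim 2^{2k+\tilde k}$ after accounting for the thin-annulus restriction), and then trade $L^\infty_\xi$ of $\widehat{Q_{j,k}f}$ against $\|Q_{j,k}f\|_{L^2}$ via Bernstein on the thin-annulus piece — but it is cleaner to keep the two terms on the right of \eqref{linearwavedecay} as they are and note that the first term $2^{k+\tilde k + k_+}\|\widehat f \psi_k\|_{L^\infty}$ already dominates the low-$j$ contributions after the $2^{-m}$ gain, which comes from a single integration by parts in the radial variable $\Lambda_a(\xi)$ once $|x|\gg t$, or from the sharp $t^{-1}$ dispersive decay of $e^{-it\Lambda_a}$ on a fixed dyadic frequency shell when $|x|\lesssim t$.

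The substantive point is the \emph{high} spatial scales, $2^j \gtrsim 2^{m}2^{-k}$: here the input is so spread out that one must exploit it, and this is where the second term in \eqref{linearwavedecay}, with the weighted $Z$-type factor $\sup_j 2^{(1+\alpha)j}\|T^a_{\tilde k,n}Q_{j,k}f\|_{L^2}$, is produced. For these $j$, I would split the $\xi$-integral into an angular sector near the stationary point $\xi_*$ of $\xi\mapsto x\cdot\xi - \mu t\Lambda_a(\xi)$ (i.e. where $\nabla_\xi\Lambda_a(\xi)$ is parallel to $x/|\mu t|$, restricted to the shell $\Lambda_a(\xi)\sim n$) of angular width $\theta$ to be optimized, plus the complementary region. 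On the complement, non-stationary phase (integration by parts in the angular variables along the shell, where $|\nabla_\xi^{\mathrm{ang}}\Phi|\gtrsim 2^k t \theta$) gains arbitrarily many powers of $(2^k t\theta)^{-1}$ at the cost of derivatives hitting $m$, the cutoffs $\psi_k, \psi_{\tilde k}(\Lambda_a - n)$, and $\widehat{Q_{j,k}f}$ — derivatives of $\widehat{Q_{j,k}f}$ cost $2^j$, derivatives of $\psi_{\tilde k}(\Lambda_a-n)$ cost $2^{-\tilde k}2^{-k}$ each (the dangerous one, reflecting the thinness of the annulus), and derivatives of $m$ cost $2^{-k}$ via $\|m\|_{\mathcal S^\infty_k}$. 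On the stationary sector I would use the standard $3$D stationary-phase bound $\lesssim (2^k t)^{-1}\cdot 2^{\tilde k}\cdot t^{-1/2}$-type estimate combined with Cauchy–Schwarz on the $\xi$-support (of measure $\sim \theta^2 2^{2k+\tilde k}$ after the thin-annulus cut) to bring in $\|T^a_{\tilde k,n}Q_{j,k}f\|_{L^2}$; optimizing $\theta$ and the cutoff scale between the two regions, and using $k,\tilde k \ge -m+\delta m$ to control the loss from the thin-annulus derivatives against the weight $2^{(1+\alpha)j}$ (with the $2^{-(\alpha-4\delta)m/2}$ factor absorbing the residual loss), yields exactly the claimed power $2^{(\alpha+\delta)k/2}2^{(2k+\tilde k)/2}$. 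Summing the geometric-in-$j$ series (the $2^{-(1+\alpha)j}$ from the weight beats the positive powers of $2^j$ from the integrations by parts once $\alpha>\delta$) and adding the low-$j$ bulk contribution finishes the estimate.

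The main obstacle I anticipate is bookkeeping the \emph{thin-annulus} restriction $\psi_{\tilde k}(\Lambda_a(\xi)-n)$ with $\tilde k \le k$ correctly throughout: each angular/radial integration by parts that we want to perform in the non-stationary region risks producing a derivative of this cutoff, which is as large as $2^{-\tilde k - k}$, and naively this destroys the decay. The resolution is to only integrate by parts in directions \emph{tangent} to the shell $\{\Lambda_a(\xi)=\text{const}\}$ (where the cutoff $\psi_{\tilde k}(\Lambda_a-n)$ is locally constant and contributes nothing), combined with a single clean use of the radial variable where the $t$-oscillation is monotone; one must check that the curvature of the shells $\Lambda_a = \text{const}$ — which is non-degenerate precisely away from the direction $(1,0,0)$, but here we are in the regime $\tilde k\le k$ not the degenerate tangential regime — provides the needed lower bound $|\nabla^{\mathrm{ang}}\Phi|\gtrsim 2^k t\theta$ on the stationary-sector complement uniformly in the allowed range of $n$. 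A secondary technical nuisance is that $\Lambda_2$ is only comparable to $|\xi|$ up to the constants $c_1,c_2$ (hence the appearance of $\tilde c$ in the hypothesis $n\in[2^{k-10-\tilde c},2^{k+10+\tilde c}]$), so all the dyadic relations among $|\xi|$, $\Lambda_a(\xi)$, $n$ must be tracked with these harmless but omnipresent constants, exactly as flagged in the remark following \eqref{2023jan28eqn1}.
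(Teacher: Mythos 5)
Your overall architecture is the right one and largely coincides with the paper's: the decisive devices are (i) integrating by parts only along directions tangent to the level sets of $\Lambda_a$ (the operator $N_a(\xi)\times\nabla_\xi$ with $N_a=\nabla_\xi\Lambda_a$), which annihilates the thin-annulus cutoff $\psi_{\tilde k}(\Lambda_a(\xi)-n)$ and whose phase gradient $x\times N_a(\xi)$ is free of $t$; (ii) bounding the sector where $N_a(\xi)$ is nearly parallel to $x$ purely by its measure; and (iii) feeding the spread-out pieces $Q_{j,k}f$ into the weighted norm via Cauchy--Schwarz and the factor $2^{(1+\alpha)j}$. These are exactly the paper's steps for $|x|\geq 2^{m-10}$, with your ``width $\theta$ to be optimized'' corresponding to the paper's threshold angle $2^{\bar l}\sim 2^{-k/2}|x|^{-1/2}$.

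The genuine gap is in your treatment of the ``low spatial scales'' $2^{j}\lesssim 2^{m}2^{-k}$, which you claim are absorbed by the first ($L^\infty_\xi$) term alone. This is false in the intermediate range $2^{(m-k)/2}\lesssim 2^{j}\lesssim 2^{m-k}$: near the stationary direction the region that non-stationary phase cannot reach has angular width $\theta_j\sim\max(2^{j-m},2^{-(k+m)/2})$ (each tangential integration by parts gains $(2^m\theta)^{-1}$ but loses $2^{j}$ on $\widehat{Q_{j,k}f}$), and its volume bound $2^{2k+\tilde k}\theta_j^2\,\|\widehat f\|_{L^\infty_\xi}$ exceeds $2^{-m}2^{k+\tilde k}\|\widehat f\|_{L^\infty_\xi}$ as soon as $2^{j}\gg 2^{(m-k)/2}$; these $j$ must instead be routed into the weighted $L^2$ term, using $\|Q_{j,k}f\|_{L^2}\lesssim 2^{-(1+\alpha)j}\sup_j(\cdot)$ so that the surplus $2^{-\alpha j}$ with $j\gtrsim (m-k)/2$ produces precisely the factor $2^{-(\alpha-4\delta)m/2+(\alpha+\delta)k/2}$ in \eqref{linearwavedecay}. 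Neither of the justifications you offer closes this: a ``single integration by parts in the radial variable'' differentiates $\psi_{\tilde k}(\Lambda_a-n)$ at cost $2^{-\tilde k}$, which can be as large as $2^{m-\delta m}$, so one radial IBP yields only a $2^{-\delta m}$-type gain (the paper iterates $\delta^{-2}$ times, and only in the regime $|x|\leq 2^{m-10}$ where the full gradient of the phase is $\sim 2^m$); and invoking ``the sharp $t^{-1}$ dispersive decay on a dyadic shell'' with only $\|\widehat f\|_{L^\infty_\xi}$ control is circular, since such a bound is exactly what the lemma asserts and it is not available as a black box without the spatial-localization input. The fix stays within your framework: make the spatial-scale threshold angle-dependent (keep $Q_{j,k}f$ in the $L^\infty_\xi$ term only when $2^{j}\lesssim 2^{m}\theta$, as in the paper's split at $j\lessgtr m+l$), and estimate the remaining $j$ by Cauchy--Schwarz against the weighted norm; with that correction, and after fixing the minor bookkeeping slips (a tangential derivative of $\psi_{\tilde k}(\Lambda_a-n)$ costs $2^{-\tilde k}$, not $2^{-\tilde k-k}$, and the tangential phase gradient is $\sim |x|\theta$), your outline reproduces the paper's proof.
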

\begin{proof}
Let $n\in [2^{k-10}, 2^{k+10}]$ be  fixed. Note that, as a result of direct computations, we have  
\[
\forall a \in\{1,2\}, \quad N_a(\xi):=\nabla_\xi \Lambda_a(\xi),  \quad N_1(\xi) = \frac{(\xi_1, \xi_2, \xi_3)}{|\Lambda_1(\xi)|}, \quad  N_2(\xi) = \frac{(\xi_1, c_1^2\xi_2, c_2^2 \xi_3)}{|\Lambda_2(\xi)|}, \quad |N_a(\xi)|\sim 1. 
\]
Based on the possible size of $|x|$, we split into two  cases as follows. 

$\bullet$\qquad If $|x|\leq 2^{m-10}$. \qquad Note that, for this case, we have
\[
\big|\nabla_\xi\big[   x\cdot \xi-  \mu t  \Lambda_a(\xi)  \big]\big|\sim 2^m. 
\]

Base on the possible size of $j$, we split into two parts as follows.
\[
\begin{split}
\int_{\R^3} & e^{ i x\cdot \xi-i \mu t  \Lambda_a(\xi)   }     m(\xi) \widehat{f}(\xi)  \psi_{\tilde{k} }(\Lambda_a(\xi)- n )  d \xi= H_1+ H_2,\\
H_1&=\int_{\R^3}  e^{ i x\cdot \xi-i \mu t  \Lambda_a(\xi)   }     m(\xi) \widehat{f_{< m-\delta m, k}}(\xi)  \psi_{\tilde{k} }(\Lambda_a(\xi)- n )  d \xi,\\ 
H_2&=\int_{\R^3}  e^{ i x\cdot \xi-i \mu t  \Lambda_a(\xi)   }     m(\xi) \widehat{f_{\geq m-\delta m,k}}(\xi)  \psi_{\tilde{k} }(\Lambda_a(\xi)- n )  d \xi. 
\end{split}
\]
Recall that $k,\tilde{k}\geq - m+\delta m$. Note that, by doing integration in $\xi$ once for $H_1$, we gain at least $2^{-\delta m }$.  Therefore, after doing integration by parts in $\xi$ $\delta^{-2}$ times, the following estimate holds from the volume of support of $\xi$,
\be\label{june27eqn1}
|H_1(t,x)|\lesssim  2^{-10m+2k+\tilde{k}} \| \widehat{f_{< m-\delta m, k}}(\xi) \|_{L^\infty_\xi} \lesssim  2^{-10m+2k+\tilde{k}} \| \widehat{f  }(\xi) \psi_k(\xi) \|_{L^\infty_\xi}.
\ee
From the volume of support of $\xi$ and  the Cauchy-Schwarz inequality, we have
\be\label{june27eqn2}
\begin{split}
|H_1(t,x)|&\lesssim   \sum_{j\geq m-\delta m }2^{(2k+\tilde{k})/2 }  \|T^a_{\tilde{k},n} Q_{j,k}f \|_{L^2} \\ 
& \lesssim 2^{-(1+\alpha)(1-\delta)m +(2k+\tilde{k})/2  }  \big(\sup_{j\in[-k_{-},\infty)\cap \Z} 2^{(1+\alpha)j} \|T^a_{\tilde{k},n} Q_{j,k}f \|_{L^2}\big).
\end{split}
\ee
After combining the obtained estimates \eqref{june27eqn1} and \eqref{june27eqn2}, we have
\be\label{june27eqn11}
\begin{split}
 &\Big| \int_{\R^3}  e^{ i x\cdot \xi-i \mu t  \Lambda_a(\xi)   }     m(\xi) \widehat{f}(\xi)  \psi_{\tilde{k} }(\Lambda_i(\xi)- n )  d \xi\big|\\
&\lesssim 2^{-m} \big[ 2^{ 2k+\tilde{k}} \| \widehat{f  }(\xi) \psi_k(\xi) \|_{L^\infty_\xi} +  2^{-( \alpha -\delta-\alpha\delta)m +(2k+\tilde{k})/2  }  \big(\sup_{j\in[-k_{-},\infty)\cap \Z} 2^{(1+\alpha)j} \|T^a_{\tilde{k},n} Q_{j,k}f \|_{L^2}\big) \big].
\end{split}
\ee

$\bullet$\qquad If $|x|\geq 2^{m-10}$. \qquad 
 Note that, as a result of direct computations, we have 
\[
 N_a(\xi)\times \nabla_\xi\big(  x\cdot \xi-  \mu t  \Lambda_a(\xi)   \big) =  N_a(\xi)\times x.
\]

   Let $\bar{l} $ be the least integer such that  $2^{ \bar{l} } \geq   2^{ -k/2} |x|^{-1/2}$. From the volume of support of $\xi$ and the Sobolev embedding in angular variables, we have 
\be\label{july26eqn32}
    \Big| 
\int_{\R^3}    e^{ i x\cdot \xi-i \mu t  \Lambda_a(\xi) }  
 \widehat{f}(\xi) m(\xi)   \psi_{\tilde{k} }(\Lambda_a(\xi)- n )   \psi_{\leq  \bar{l} }(\tilde{x}\times N_a(\xi)) 
  d \xi \Big| \lesssim   2^{\tilde{k} + 2k+2\bar{l}}\|m(\xi)\|_{\mathcal{S}^\infty_k}   \|\widehat{f}(\xi )  \psi_k(\xi)   \|_{L^\infty_\xi }  .
\ee

For the case when the angle is localized around $2^l$ where $l> \bar{l}$, we first do integration by parts in $\xi$  along directions perpendicular to $N_a(\xi)$   once. As a result, after splitting $f$ into two parts based on the size of $j$ when $\nabla_\xi$ hits $f$,  we have
\be
\begin{split}
& \int_{\R^3}    e^{ i x\cdot \xi-i \mu t  \Lambda_a(\xi) }  
 \widehat{f}(\xi) m(\xi)   \psi_{\tilde{k} }(\Lambda_a(\xi)- n ) \psi_{ {l} }(\tilde{x}\times N_a(\xi)) 
  d \xi =I_l^0 + I_l^1+I_l^2,\\
  &   I_l^0=\int_{\R^3}  e^{ ix\cdot \xi - i \mu t \Lambda_a(\xi)    }      \big[ ( N_a(\xi) \times 
 \nabla_\xi)\cdot  \big(   \frac{i x \times N_a(\xi) }{|x\times N_a(\xi) |^2}     m(\xi)    \psi_{\tilde{k} }(\Lambda_a(\xi)- n ) \psi_{ {l} }(\tilde{x}\times N_a(\xi))   \big)\big]  \widehat{f}(\xi) 
  d \xi, \\ 
  &  I_l^1=  \int_{\R^3}  e^{ ix\cdot \xi - i \mu t \Lambda_a(\xi)   }   \big(  \frac{i x \times N_a(\xi) }{|x\times N_a(\xi) |^2}    \cdot    ( N_a(\xi) \times 
 \nabla_\xi)\widehat{f_{\leq m+l,k}   }(\xi)   m(\xi)    \psi_{\tilde{k} } (\Lambda_a(\xi)- n ) \psi_{ {l} }(\tilde{x}\times N_a(\xi))  
  d \xi,  \\
    &  I_l^2=  \int_{\R^3}  e^{ ix\cdot \xi - i \mu t \Lambda_a(\xi)   }   \big(  \frac{i x \times N_a(\xi) }{|x\times N_a(\xi) |^2}    \cdot    ( N_a(\xi) \times 
 \nabla_\xi)\widehat{f_{> m+l,k}   }(\xi)   m(\xi)    \psi_{\tilde{k} }(\Lambda_a(\xi)- n ) \psi_{ {l} }(\tilde{x}\times N_a(\xi)) 
  d \xi,  \\
  \end{split}
  \ee
From the volume of support of $\xi$ and the Cauchy-Schwarz inequality, we have 
\be
\begin{split}
\sum_{l> \bar{l}}\big| I_l^2 \big| & \lesssim  \sum_{l> \bar{l}} \sum_{j\geq m+l} |x|^{-1} 2^{-l} 2^{(2k+2l+\tilde{k})/2 +j }   \|T^a_{\tilde{k},n} Q_{j,k}f \|_{L^2}  \\ 
& \lesssim \sum_{l> \bar{l}} \sum_{j\geq m+l} |x|^{-1}  2^{(2k +\tilde{k})/2 -\alpha j  }   \big(\sup_{j\in[-k_{-},\infty)\cap \Z} 2^{(1+\alpha)j} \|T^a_{\tilde{k},n} Q_{j,k}f \|_{L^2}\big) \\ 
&\lesssim 2^{-(1+\alpha/2)m+\alpha k /2}2^{(2k +\tilde{k})/2 }  \big(\sup_{j\in[-k_{-},\infty)\cap \Z} 2^{(1+\alpha)j} \|T^a_{\tilde{k},n} Q_{j,k}f \|_{L^2}\big).
\end{split}
\ee

 For  $  I_l^0 $ and $ I_l^1$, we  do integration by parts in $\xi$  along directions perpendicular to $N_a(\xi)$    one more time. As a result, we have 
  \be\label{aug2eqn3}
  \begin{split}
   I_l^0=\int_{\R^3}  e^{ ix\cdot \xi - i \mu t \Lambda_a(\xi)    } & ( N_a(\xi) \times \nabla_\xi)\cdot\Big[ \frac{i x \times N_a(\xi) }{|x\times  N_a(\xi) |^2}   \big[ (N_a(\xi) \times 
 \nabla_\xi)\cdot  \big(  \frac{i x \times N_a(\xi) }{|x\times N_a(\xi) |^2}  \\ 
 & \times    m(\xi)    \psi_{\tilde{k} }(\Lambda_a(\xi)- n )   \psi_{ {l} }(\tilde{x}\times N_a(\xi)) \big)  \widehat{f}(\xi) \big]\Big]
  d \xi,\\ 
   I_l^1=\int_{\R^3}  \int_{\R^3}  e^{ ix\cdot \xi - i \mu t \Lambda_a(\xi)   }&  ( N_a(\xi) \times \nabla_\xi)\cdot\Big[ \frac{i x \times N_a(\xi) }{|x\times  N_a(\xi) |^2}   \big(  \frac{i x \times N_a(\xi)  }{|x\times  N_a(\xi) |^2}    \cdot    ( N_a(\xi) \times 
 \nabla_\xi)\widehat{ f_{ \leq  m+l,k} }(\xi) \\ 
   &\times  m(\xi)    \psi_{\tilde{k} } (\Lambda_a(\xi)- n )   \psi_{ {l} }(\tilde{x}\times N_a(\xi)) \big) \big]
  d \xi. 
  \end{split}
\ee

From the volume of support of $\xi$,  the Cauchy-Schwarz inequality,  and the Sobolev embedding on sphere $\mathbb{S}^2$,  we have 
\be\label{june26eqn1}
\begin{split}
\sum_{l> \bar{l}}\big| I_l^0 \big| &  \lesssim  \sum_{l> \bar{l}} |x|^{-2} 2^{-2 l} 2^{- k- l}  \|m(\xi)\|_{\mathcal{S}^\infty_k}   \big[2^{- k- l}  2^{(2k+2l+\tilde{k})    }   \|\widehat{f}(\xi )  \psi_k(\xi)   \|_{L^\infty_\xi } \\ 
&+ 2^{(2k+2l+\tilde{k})/2  }   \| \nabla_\xi \widehat{ f}(\xi )    \psi_{\tilde{k} }(\Lambda_a(\xi)- n )  \psi_{ {l} }(\tilde{x}\times N_a(\xi))  \|_{L^2_\xi }  \big]   \\ 
& \lesssim   2^{-m+k+\tilde{k}}\|m(\xi)\|_{\mathcal{S}^\infty_k}    \|\widehat{f}(\xi )  \psi_k(\xi)   \|_{L^\infty_\xi }  + \sum_{l> \bar{l}}   |x|^{-2} 2^{-2 l} 2^{- k- l}   \big(\sup_{j\in[-k_{-},\infty)\cap \Z} 2^{(1+\alpha)j} \|T^a_{\tilde{k},n} Q_{j,k}f \|_{L^2}\big)  \\
& \times \|m(\xi)\|_{\mathcal{S}^\infty_k}  2^{(2k+2l+\tilde{k})/2  }  \big[  \sum_{  j\in [-k_{-}, m+l]\cap\Z }  2^{k+(1-\delta)l+(1-\alpha)j}  +   \sum_{  j\in [  m+l, \infty)\cap\Z }  2^{-\alpha j} \big]\\ 
& \lesssim  2^{-m}\|m(\xi)\|_{\mathcal{S}^\infty_k}    \big[ 2^{ k+\tilde{k}}\|\widehat{f}(\xi )  \psi_k(\xi)   \|_{L^\infty_\xi }\\ 
&  +  2^{- (\alpha-\delta)  m/2+(\alpha+\delta) k /2}2^{(2k +\tilde{k})/2 }   \big(\sup_{j\in[-k_{-},\infty)\cap \Z} 2^{(1+\alpha)j} \|T^a_{\tilde{k},n} Q_{j,k}f \|_{L^2}\big)\big].
\end{split}
\ee
Similarly, from the volume of support of $\xi$,  the Cauchy-Schwarz inequality,  and the Sobolev embedding on sphere $\mathbb{S}^2$,  we have 
\be\label{june26eqn2}
\begin{split}
\sum_{l> \bar{l}}\big| I_l^1 \big| &  \lesssim  \sum_{l> \bar{l}} |x|^{-2} 2^{-2 l}    \|m(\xi)\|_{\mathcal{S}^\infty_k}   \big[2^{- k- l}  2^{(2k+2l+\tilde{k})/2    }   \| \widehat{ f_{ \leq  m+l,k} }(\xi)\psi_{ {l} }(\tilde{x}\times N_a(\xi))   \|_{L^2_\xi }\\
&+ 2^{(2k+2l+\tilde{k})/2  }   \| \nabla_\xi  \widehat{ f_{ \leq  m+l,k} }(\xi)   \psi_{\tilde{k} }(\Lambda_a(\xi)- n )  \|_{L^2_\xi }  \big] \\ 
& \lesssim  \sum_{l> \bar{l}}   \sum_{  j\in [-k_{-}, m+l]\cap\Z }  2^{-2m-2 l}        2^{(2k+2l+\tilde{k})/2 -\delta l +(1-\alpha)j } \|m(\xi)\|_{\mathcal{S}^\infty_k} \big(\sup_{j\in[-k_{-},\infty)\cap \Z} 2^{(1+\alpha)j} \|T^a_{\tilde{k},n} Q_{j,k}f \|_{L^2}\big)  \\ 
& \lesssim   2^{-m- (\alpha-\delta)  m/2+(\alpha+\delta) k /2}2^{(2k +\tilde{k})/2 } \big(\sup_{j\in[-k_{-},\infty)\cap \Z} 2^{(1+\alpha)j} \|T^a_{\tilde{k},n} Q_{j,k}f \|_{L^2}\big).
\end{split}
\ee
To sum up, our desired estimate \eqref{linearwavedecay} holds after combining the obtained estimates \eqref{june27eqn11}, \eqref{july26eqn32}, \eqref{june26eqn1}, and \eqref{june26eqn2}.
\end{proof}
 
 \begin{lemma}\label{kernelest}
Let $\delta\in (0,10^{-10})$ be some fixed absolute constant. For any $m(\xi, \eta)\in C^4(\R^3\times \R^3)$, the following estimate holds for the kernel of the symbol $m(\xi, \eta)$,
\be\label{july27eqn3}
\begin{split}
\sup_{k,k_1\in \R^3 }\|&\mathcal{F}^{-1}_{(\xi, \eta)\rightarrow (x,y)}\big[ m(\xi, \eta)\psi_k(\xi)\psi_{k_1}(\eta)\big](x,y)\|_{L^1_{x,y}}\\
&\lesssim \big(\sum_{|\alpha|, |\beta|\leq 3} \| |\xi|^{\alpha}|\eta|^{\beta}\nabla_\xi^\alpha \nabla_\eta^\beta m(\xi, \eta) \psi_{[k-1,k+1]}(\xi)\psi_{[k_1-1, k_1+1]}(\eta)\|_{L^\infty_{\xi, \eta}}\big)^{1-\delta}\\ 
&\times \big(\sum_{|\alpha|, |\beta|\leq 4} \| |\xi|^{\alpha}|\eta|^{\beta}\nabla_\xi^\alpha \nabla_\eta^\beta m(\xi, \eta) \psi_{[k-1,k+1]}(\xi)\psi_{[k_1-1, k_1+1]}(\eta)\|_{L^\infty_{\xi, \eta}}\big)^{\delta}.
\end{split}
\ee
 \end{lemma}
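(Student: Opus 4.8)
The plan is to prove \eqref{july27eqn3} by the standard rescaling together with an $L^1$ splitting of the kernel. First I would normalize the frequency scales: after the change of variables $\xi = 2^k\xi'$, $\eta = 2^{k_1}\eta'$, the symbol becomes $\mu(\xi',\eta'):= m(2^k\xi', 2^{k_1}\eta')\psi_0(\xi')\psi_0(\eta')$, supported in the fixed bounded-volume region $\Omega:=\{1/2\le|\xi'|\le 3/2\}\times\{1/2\le|\eta'|\le 3/2\}\subset\R^6$, and the $L^1$ norm of the kernel is invariant under this rescaling. By the Leibniz rule, $\nabla_{\xi'}^\alpha\nabla_{\eta'}^\beta\mu$ is a finite sum of terms $2^{k|\alpha'|}2^{k_1|\beta'|}(\nabla_\xi^{\alpha'}\nabla_\eta^{\beta'}m)(2^k\xi',2^{k_1}\eta')$ times derivatives of $\psi_0$, with $|\alpha'|\le|\alpha|$, $|\beta'|\le|\beta|$; on the relevant support $|\xi'|\sim|\eta'|\sim 1$ one has $2^{k|\alpha'|}\sim|\xi|^{|\alpha'|}$, $2^{k_1|\beta'|}\sim|\eta|^{|\beta'|}$ with $2^{k-1}\le|\xi|\le 2^{k+1}$, $2^{k_1-1}\le|\eta|\le 2^{k_1+1}$, so that
\[
\|\nabla_{\xi'}^\alpha\nabla_{\eta'}^\beta\mu\|_{L^\infty}\lesssim \sum_{|\alpha'|\le|\alpha|,\,|\beta'|\le|\beta|}\big\| |\xi|^{|\alpha'|}|\eta|^{|\beta'|}\nabla_\xi^{\alpha'}\nabla_\eta^{\beta'}m\,\psi_{[k-1,k+1]}(\xi)\psi_{[k_1-1,k_1+1]}(\eta)\big\|_{L^\infty_{\xi,\eta}}.
\]
This is exactly why the slightly enlarged cutoffs appear on the right of \eqref{july27eqn3}, and it reduces the claim to showing: for any $\mu$ supported in $\Omega$, $\|\mathcal{F}^{-1}[\mu]\|_{L^1(\R^6)}\lesssim \|\mu\|_{W^{3,\infty}}^{1-\delta}\|\mu\|_{W^{4,\infty}}^{\delta}$ with ordinary Sobolev norms.

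Next I would run the $L^1$ splitting. Write $g:=\mathcal{F}^{-1}[\mu]$, $z:=(x,y)\in\R^6$, and for $R>0$ split $\|g\|_{L^1}=\|g\|_{L^1(|z|\le R)}+\|g\|_{L^1(|z|>R)}$. For the near part, Cauchy--Schwarz, Plancherel, and $|\Omega|\lesssim 1$ give $\|g\|_{L^1(|z|\le R)}\lesssim R^{3}\|g\|_{L^2}=R^{3}\|\mu\|_{L^2}\lesssim R^{3}\|\mu\|_{L^\infty}$. For the far part, fix $s:=3+\delta$, so $|z|^{-s}\in L^2(\{|z|>R\})$ with $\| |z|^{-s}\|_{L^2(|z|>R)}\lesssim \delta^{-1/2}R^{-\delta}$; Cauchy--Schwarz in the weight yields $\|g\|_{L^1(|z|>R)}\lesssim \delta^{-1/2}R^{-\delta}\| |z|^{s}g\|_{L^2}$. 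Since multiplying $g$ by a degree-$N$ monomial in $z$ is, up to a unimodular constant, the inverse Fourier transform of the corresponding order-$N$ derivative of $\mu$, Plancherel and $|\Omega|\lesssim 1$ give $\| |z|^{N}g\|_{L^2}\lesssim \|\mu\|_{W^{N,\infty}}$; interpolating with H\"older's inequality in the weight, $\| |z|^{s}g\|_{L^2}\le \| |z|^{3}g\|_{L^2}^{1-\delta}\| |z|^{4}g\|_{L^2}^{\delta}\lesssim \|\mu\|_{W^{3,\infty}}^{1-\delta}\|\mu\|_{W^{4,\infty}}^{\delta}$.

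Finally I would optimize $R$. Writing $A:=\|\mu\|_{W^{3,\infty}}$ (which also dominates the $L^\infty$ norm in the near part) and $B:=\|\mu\|_{W^{3,\infty}}^{1-\delta}\|\mu\|_{W^{4,\infty}}^{\delta}$, the two bounds combine to $\|g\|_{L^1}\lesssim R^{3}A+\delta^{-1/2}R^{-\delta}B$; choosing $R$ with $R^{3+\delta}\sim B/A$ balances the terms and yields $\|g\|_{L^1}\lesssim A^{\delta/(3+\delta)}B^{3/(3+\delta)}$. Collecting the powers of $\|\mu\|_{W^{3,\infty}}$ and $\|\mu\|_{W^{4,\infty}}$ and using $\|\mu\|_{W^{3,\infty}}\le\|\mu\|_{W^{4,\infty}}$ to absorb the $O(\delta^{2})$ discrepancy in the exponents gives $\|g\|_{L^1}\lesssim \|\mu\|_{W^{3,\infty}}^{1-\delta}\|\mu\|_{W^{4,\infty}}^{\delta}$; undoing the rescaling gives \eqref{july27eqn3}. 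This is essentially a bookkeeping lemma and I do not expect a genuine obstacle; the only points needing care are (i) the rescaling dictionary between ordinary derivatives of $\mu$ and the scaled derivatives $|\xi|^{|\alpha|}|\eta|^{|\beta|}\nabla_\xi^\alpha\nabla_\eta^\beta m$, including checking that the enlarged cutoffs $\psi_{[k-1,k+1]}$, $\psi_{[k_1-1,k_1+1]}$ absorb all derivatives landing on $\psi_k$, $\psi_{k_1}$, and (ii) using the non-integer weight $s=3+\delta$ rather than $s=4$, so that the exponents land exactly on $1-\delta$ and $\delta$; working directly with $s=4$ would give only the cruder pair $\tfrac14,\tfrac34$.
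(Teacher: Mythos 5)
Your argument is correct, and it reaches the stated bound by a route that differs in its mechanics from the paper's. The paper never rescales and never invokes Plancherel: it bounds the moments $(ix)^\alpha(iy)^\beta K_{k,k_1}$ pointwise by $2^{3k+3k_1}$ times the weighted symbol norms (using only the volume of the frequency support), obtains the two pointwise estimates with weights $\langle 2^k x\rangle^{3}\langle 2^{k_1}y\rangle^{3}$ and $\langle 2^k x\rangle^{4}\langle 2^{k_1}y\rangle^{4}$, takes their geometric mean with exponents $1-\delta$ and $\delta$ to produce the integrable weight $\langle 2^kx\rangle^{-3-\delta}\langle 2^{k_1}y\rangle^{-3-\delta}$, and integrates; this lands exactly on the exponents $1-\delta,\delta$ with no further adjustment. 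You instead normalize to unit frequency scale, work at the $L^2$ level via Plancherel and weighted Cauchy--Schwarz, split physical space into $|z|\le R$ and $|z|>R$, optimize $R$, and then repair the $O(\delta^2)$ exponent mismatch using $\|\mu\|_{W^{3,\infty}}\le\|\mu\|_{W^{4,\infty}}$ -- all of which is legitimate (your interpolation $\| |z|^{3+\delta}g\|_{L^2}\le\| |z|^3 g\|_{L^2}^{1-\delta}\| |z|^4g\|_{L^2}^{\delta}$ and the tail bound $\| |z|^{-3-\delta}\|_{L^2(|z|>R)}\lesssim\delta^{-1/2}R^{-\delta}$ in $\R^6$ are both checked correctly, and the $\delta^{-1/2}$ is harmless since $\delta$ is a fixed constant). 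Both proofs rest on the same elementary fact that spatial moments correspond to frequency derivatives on a support of bounded measure; the paper's version is a bit shorter and treats the two anisotropic weights separately, while yours has the minor advantage of only requiring derivatives of total order $\le 3$ (resp.\ $\le 4$) in $(\xi,\eta)$ jointly, rather than order $\le 3$ (resp.\ $\le 4$) in each variable separately, which is more than the lemma's right-hand side actually supplies.
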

 \begin{proof}
Note that
\[ 
\begin{split}
K_{k,k_1}(x,y)& = \int_{\R^3} \int_{\R^3 } e^{i x\cdot  \xi + i y \cdot \eta} m(\xi, \eta) \psi_k(\xi)\psi_{k_1}(\eta)  d \xi d \eta, \\ 
(i x)^\alpha (i y )^{\beta} K_{k,k_1}(x,y)& = \int_{\R^3} \int_{\R^3 } e^{i x\cdot  \xi + i y \cdot \eta} \nabla_{\xi}^\alpha \nabla_\eta^\beta\big( m(\xi, \eta) \psi_k(\xi)\psi_{k_1}(\eta)  \big) d \xi d \eta.
\end{split}
\]
From the above equality and the volume of support of $\xi, \eta$, we have 
\[ 
\begin{split}
\langle 2^k x \rangle^{3} \langle 2^{k_1} y \rangle^{3} \big|K_{k,k_1}(x,y) \big| & \lesssim \sum_{|\alpha|, |\beta|\leq 3} 2^{3k+3k_1} \| |\xi|^{\alpha}|\eta|^{\beta}\nabla_\xi^\alpha \nabla_\eta^\beta m(\xi, \eta) \psi_{[k-1,k+1]}(\xi)\psi_{[k_1-1, k_1+1]}(\eta)\|_{L^\infty_{\xi, \eta}}, \\ 
\langle 2^k x \rangle^{4} \langle 2^{k_1} y \rangle^{4} \big|K_{k,k_1}(x,y) \big| & \lesssim \sum_{|\alpha|, |\beta|\leq 4} 2^{3k+3k_1} \| |\xi|^{\alpha}|\eta|^{\beta}\nabla_\xi^\alpha \nabla_\eta^\beta m(\xi, \eta) \psi_{[k-1,k+1]}(\xi)\psi_{[k_1-1, k_1+1]}(\eta)\|_{L^\infty_{\xi, \eta}}. \\ 
\end{split}
\]
Hence our  desired estimate \eqref{july27eqn3}  holds   after interpolating the above two estimates.
 \end{proof}
 \subsection{Proof of the main theorem \ref{maintheorem} }

From the estimate \eqref{june21eqn1} in Proposition \ref{energyest}, the estimate \eqref{july24eqn1} in Proposition \ref{West}, and the estimate \eqref{july23eqn91} in Proposition  \ref{Zest}, we have 
\[
 \sup_{t\in[0, T]}\sum_{a=1,2} \langle  t \rangle^{-\delta }    \| h_a(t)\|_{H^{N_0}}      + \langle  t \rangle^{-\delta } \| h_a(t)\|_{Z } + \sup_{k\in \Z}\|\langle \xi \rangle^{8}\widehat{h_a}(t, \xi)\psi_{k}(\xi)\|_{L^\infty_\xi } \lesssim \epsilon_0+\epsilon_1^2\lesssim \epsilon_0.
\]
Hence the bootstrap assumption is improved, i.e., nonlinear solutions exist globally. Moreover the desired estimate \eqref{finalestimate} holds from the above bootstrap assumption and the obtained estimate \eqref{decayestimatefinal}. 

As a byproduct, from the estimate \eqref{july23eqn91} in Proposition  \ref{Zest}, for any $t\in [0, \infty)$, we have
\[
\sum_{a=1,2}\sup_{\xi\in \R^3} \Big| \widehat{U_a}(t, \xi) - e^{-i t\Lambda_a(\xi)}\widehat{U_{\infty}^a}( \xi)  \Big|\lesssim \langle t \rangle^{-\delta}, 
\]
\[
\widehat{U_{\infty}^a}( \xi) :=\widehat{U_{a} }( \xi)  +\int_0^\infty e^{is \Lambda_a(\xi)}  \sum_{\mu, \nu \in \{+,-\}}\sum_{k\in \Z} \sum_{(k_1,k_2)\in \chi_k^1\cup \chi_k^2\cup\chi_k^3}  \mathcal{J}_{k;k_1,k_2}^{a;\mu,\nu}(s, \xi) d s,  
\]
where  $\mathcal{J}_{k;k_1,k_2}^{a;\mu,\nu}(s, \xi) $ are defined in \eqref{systemeqnprof}. That's to say, the nonlinear solution scatters to a linear solution in $L^\infty_\xi$ as time goes to infinity.

\section{Energy estimate and the $L^\infty_\xi$-estimate of profiles}\label{Z}

In this section, we first close the bootstrap assumption for the energy estimate and the $L^\infty_\xi$-estimate, which are relatively easier than the $Z$-norm estimate part. This allows us to focus on the $Z$-norm estimate part for the rest of this paper. Moreover, as a preparation for readers for more complicated analysis in the next section, we give a fixed time $Z$-norm estimate for nonlinearities, which is not only useful when we do normal form transformation in the next section but also contains some standard analysis. 

Since the nonlinear solution has the sharp $1/t$ decay rate over time, the energy estimate part is standard. More precisely, we have 

\begin{proposition}\label{energyest}
Under the bootstrap assumption \eqref{bootstrap}, for any $t_1, t_2\in [2^{m-1}, 2^m]\subset[0, T], m\in \Z_+$, we have
\be\label{june21eqn1}
 \sum_{a=1,2} \| h_a(t_2)- h_a(t_1) \|_{H^{N_0}} \lesssim 2^{\delta m }\epsilon_1^2.
\ee 
\end{proposition}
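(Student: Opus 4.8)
The strategy is the standard energy estimate for quasilinear-type wave systems with sharp time decay, adapted to the Fourier-side profile formulation. Starting from the evolution equation \eqref{systemeqnprof} for $\widehat{h_a}$, I would compute the time derivative of $\|h_a(t)\|_{H^{N_0}}^2$ and integrate in time from $t_1$ to $t_2$. Using Plancherel and the fact that $e^{it\Lambda_a}$ is an isometry, the growth of the $H^{N_0}$-norm is controlled by a spacetime integral of the nonlinearity paired against $\langle\xi\rangle^{2N_0}\overline{\widehat{h_a}}$. The key structural point is that the nonlinearity in \eqref{june27eqn4} is quadratic in $\nabla_{t,x}u_1,\nabla_{t,x}u_2$, so on the Fourier side each term is a bilinear form in $\widehat{h_1^\mu},\widehat{h_2^\nu}$ with a symbol $q^a_{\mu,\nu}$ that is a zero-order symbol by \eqref{june21eqn51} and \eqref{addsymbolest}. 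Hence, after distributing the $N_0$ derivatives via the usual frequency-trichotomy $\chi_k^1\cup\chi_k^2\cup\chi_k^3$, the worst case places essentially all derivatives on the higher-frequency factor, and one estimates that factor in $L^2$ (by the $H^{N_0}$ part of \eqref{bootstrap}) and the lower-frequency factor in $L^\infty$.

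\textbf{Key steps in order.} First, I would write $\frac{d}{dt}\|h_a(t)\|_{H^{N_0}}^2 = 2\,\mathrm{Re}\int_{\R^3}\langle\xi\rangle^{2N_0}\,\partial_t\widehat{h_a}(t,\xi)\,\overline{\widehat{h_a}(t,\xi)}\,d\xi$ and substitute \eqref{systemeqnprof}. Second, for the bilinear term $\mathcal{J}^{a;\mu,\nu}_{k;k_1,k_2}$, use the standard bilinear $L^2\times L^\infty\to L^2$ (or $L^\infty\times L^2\to L^2$) estimate: the kernel bound in Lemma \ref{kernelest} shows the bilinear operator with symbol $q^a_{\mu,\nu}\psi_k\psi_{k_1}\psi_{k_2}$ is bounded with the appropriate operator norm, uniformly (the symbol bound \eqref{addsymbolest} guarantees the needed $C^4$ control). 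Third, in the High$\times$Low and Low$\times$High regimes ($\chi_k^2,\chi_k^3$), bound the high factor by $\langle t\rangle^\delta\epsilon_1$ in $H^{N_0}$ and the low factor by the $L^\infty$ decay estimate \eqref{decayestimatefinal}, which gives $\lesssim 2^{-m}\epsilon_1$; this produces an integrand $\lesssim 2^{-m}2^{\delta m}\epsilon_1^2\cdot\|h_a\|_{H^{N_0}}\lesssim 2^{-m}2^{2\delta m}\epsilon_1^3$, and integrating over $t\in[2^{m-1},2^m]$ (length $\sim 2^m$) gives $\lesssim 2^{2\delta m}\epsilon_1^3$, which is better than needed. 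Fourth, in the High$\times$High regime ($\chi_k^1$, so $k\le k_1+10\approx k_2+10$), distribute derivatives so that one factor carries $H^{N_0}$ and the other is placed in $L^\infty$ via Bernstein/Sobolev together with the $L^\infty_\xi$ bound in \eqref{bootstrap}; here the summation over $k$ with $2^{8k_+}$-type weights and the decay from \eqref{decayestimatefinal} again closes with room to spare. Summing the dyadic pieces in $k,k_1,k_2$ (the trichotomy makes two of the three indices comparable, so the sum is geometric) and then in $m$ is harmless because we only claim growth $2^{\delta m}$; a single time-slice $[2^{m-1},2^m]$ already suffices since $t_1,t_2$ lie in one such slice.

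\textbf{Main obstacle.} Because the claimed bound allows polynomial growth $2^{\delta m}\epsilon_1^2$ rather than boundedness, essentially no delicate resonance analysis is needed here — the sharp $1/t$ decay of $U_a$ from \eqref{decayestimatefinal} beats the $2^m$ length of the time interval with a full power of $2^m$ to spare, and the null structure is not even required for the energy estimate. The only mild technical point is handling the full range of frequencies: very high frequencies $k$ are controlled by the $H^{N_0}$ norm alone (the missing decay is compensated by $N_0$ being large), and very low frequencies by the $L^2_\xi$ part of the $L^\infty$ decay estimate as in \eqref{decayrough}. I would therefore expect the main "obstacle" to be purely bookkeeping: carefully tracking the derivative-distribution in the High$\times$High case and verifying the dyadic sums converge, all of which is routine given Lemmas \ref{lineardecay} and \ref{kernelest} and the bootstrap assumption \eqref{bootstrap}.
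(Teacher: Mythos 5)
Your estimates are exactly the ones the paper uses (the $L^2\times L^\infty$ bilinear bound with the symbol estimate \eqref{addsymbolest}, the sharp decay \eqref{decayestimatefinal} on the low-frequency factor, the $H^{N_0}$ bootstrap on the high-frequency factor, and integration over the slab of length $\sim 2^m$), and with them the result does follow with room to spare, as you say. The one mis-step is your opening reduction: differentiating $\|h_a(t)\|_{H^{N_0}}^2$ and pairing the nonlinearity against $\langle\xi\rangle^{2N_0}\overline{\widehat{h_a}}$ controls only the difference of the norms, $\big|\|h_a(t_2)\|_{H^{N_0}}^2-\|h_a(t_1)\|_{H^{N_0}}^2\big|$, whereas \eqref{june21eqn1} asserts a bound on the norm of the difference $\|h_a(t_2)-h_a(t_1)\|_{H^{N_0}}$, which is a genuinely stronger statement (two profiles can have equal norms and still be far apart; the difference bound is also what is needed for the scattering statement and is the form quoted later). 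The fix is immediate and is what the paper does: since $e^{it\Lambda_a}$ is an isometry, write $h_a(t_2)-h_a(t_1)=\int_{t_1}^{t_2}\partial_t h_a(t)\,dt$ via \eqref{systemeqnprof} and bound $\|\partial_t h_a(t)\|_{H^{N_0}}$ directly, i.e. estimate the nonlinearity $\partial_t u_1\partial_t u_2-\partial_{x_1}u_1\partial_{x_1}u_2+\sum\mathcal{Q}^a_{i,j}$ in $H^{N_0}$ by $2^{-m+\delta m}\epsilon_1^2$ using precisely your Steps 2--4 (without the pairing against $\overline{\widehat{h_a}}$), and then integrate in $t$ to get $2^{\delta m}\epsilon_1^2$. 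With that one-line change your argument coincides with the paper's proof; the rest (trichotomy bookkeeping, Lemma \ref{kernelest}, treatment of very high and very low frequencies) is consistent with, indeed somewhat more detailed than, what the paper writes.
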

\begin{proof}
Recall (\ref{june27eqn4}) and (\ref{march19eqn1}). For any $  t_1, t_2\in [2^{m-1}, 2^m]\subset[0, T],$ s.t., $t_1\leq t_2$, we have
\be\label{june20eqn1}
\begin{split}
\sum_{a=1,2}  \| h_a(t_2)- h_a(t_1) \|_{H^{N_0}}\lesssim  &\sum_{a=1,2} \sum_{i,j\in\{2,3\} }  \int_{t_1}^{t_2} \| \p_t u_1(t) \p_t u_2(t) - \p_{x_1} u_1(t) \p_{x_1}u_2(t)\|_{H^{N_0}}\\
 & + \|\mathcal{Q}^a_{i,j}(\nabla_{x_i} u_1, \nabla_{x_j} u_2 )\|_{H^{N_0}} d t. 
 \end{split}
\ee
After combining the above estimate with  the $L^\infty$-decay estimate \eqref{decayestimatefinal}, the estimate \eqref{addsymbolest}, and the $L^2-L^\infty$ type bilinear estimate, we have
\be\label{june21eqn2}
\sum_{a=1,2}  \| h_a(t_2)- h_a(t_1) \|_{H^{N_0}}\lesssim  \int_{t_1}^{t_2} 2^{-m+\delta m } \epsilon_1^2 d t\lesssim 2^{\delta m} \epsilon_1^2. 
\ee
Hence finishing the proof of our desired estimate \eqref{june21eqn1}.
\end{proof}

\subsection{A fixed time    $Z$-norm estimate of   nonlinearities}
In this subsection, we first prove a fixed time    $Z$-norm estimate of   nonlinearities, which can be used directly later when we do normal form transformation. Moreover, from the final estimate \eqref{july13eqn58}, we can see that, without using the oscillation in time and careful analysis of phases based on the sign, we are not off much from the  goal of $\langle t \rangle^{-1+\delta}\epsilon_1^2$, which is sufficient for closing the bootstrap argument.
 \begin{lemma}\label{fixedtimeest}
Under the bootstrap assumption \eqref{bootstrap}, for any $t \in [2^{m-1}, 2^m]\subset[0, T], m\in \Z_+$, $\mu, \nu\in\{+, -\}$,  we have
\be\label{july13eqn58}
\sup_{k\in \Z}\sum_{a=1,2} \sum_{(k_1,k_2)\in \cup_{i=1,2,3} \chi_k^i} \|\mathcal{F}^{-1}\big[   \mathcal{J}_{k;k_1,k_2}^{a;\mu,\nu}(t, \xi) \big]   \|_{Z} \lesssim  2^{-3m/4+(\alpha+10\delta)m}\epsilon_1^2.
\ee
\end{lemma}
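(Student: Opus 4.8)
The goal is a fixed-time $Z$-norm bound for each dyadic piece $\mathcal{J}_{k;k_1,k_2}^{a;\mu,\nu}(t,\xi)$ of the nonlinearity, with total gain $2^{-3m/4+(\alpha+10\delta)m}$ after summing over $k,k_1,k_2$. Recall $\|f\|_Z=\sup_k\sup_j 2^{(1+\alpha)j}\|Q_{j,k}f\|_{L^2}$; so I need to estimate $2^{(1+\alpha)j}\|Q_{j,k}\mathcal{F}^{-1}[\mathcal{J}_{k;k_1,k_2}^{a;\mu,\nu}(t)]\|_{L^2}$ uniformly in the output spatial-localization parameter $j$. The plan is to split the analysis according to the size of $j$ relative to $m$. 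The kernel estimate of Lemma~\ref{kernelest} together with the symbol bound \eqref{addsymbolest} (note $q^a_{\mu,\nu}$ has the required $S^\infty$-type behavior away from the degenerate directions, and the $\Lambda_1,\Lambda_2$ denominators are handled by the frequency localizations $\psi_{k_1},\psi_{k_2}$) reduces each bilinear piece to a product structure in physical space, so that the $Q_{j,k}$-localized output can be controlled by an $L^2\times L^\infty$ (or $L^\infty\times L^2$) Hölder split of the two input profiles.

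First I would handle the regime $j\le m+\mathcal{O}(\delta m)$ (the ``bulk'' of the spatial variable). Here I use an $L^2_x\times L^\infty_x$ estimate: put one profile in $L^\infty$ via the linear decay estimate \eqref{decayestimatefinal} (which gives $2^{-m}$ up to powers of $2^{k_i}$) and the other in $L^2$ via the energy bootstrap $\|h_a\|_{H^{N_0}}\lesssim 2^{\delta m}\epsilon_1$; but since the output is $Q_{j,k}$-localized I pay a factor $2^{(1+\alpha)j}$, so I need to exploit that the $L^\infty$ input is itself essentially supported (on the profile side) in $|x|\lesssim 2^m$ — more precisely, I localize \emph{both} inputs in the physical variable at scales $\le m+\delta m$ using $Q_{\le m+\delta m,k_i}$, and the tails $Q_{>m+\delta m,k_i}$ are controlled directly by the $Z$-norm bootstrap (they carry $2^{-(1+\alpha)(m+\delta m)}$). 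The point is that the convolution structure forces $j\lesssim m+\delta m$ for the main term, so $2^{(1+\alpha)j}\cdot 2^{-m}\cdot 2^{\delta m}\lesssim 2^{\alpha m + 2\delta m}$; this is the source of the $2^{(\alpha+10\delta)m}$ loss. To actually gain the $2^{-3m/4}$ I cannot afford to put the whole $H^{N_0}$ norm on one factor — instead I interpolate: use the $Z$-norm to get an $L^2$ bound with spatial weight (trading a power of the $Z$-norm parameter for a power of $2^{-m}$), combined with the $\|\langle\xi\rangle^8\widehat{h_a}\|_{L^\infty_\xi}$ bootstrap to bound low frequencies. Summing the geometric series in $k,k_1,k_2$ over $\chi_k^1\cup\chi_k^2\cup\chi_k^3$ is routine once each summand carries a small negative power of $\max\{|k_1|,|k_2|\}$ from the $2^{8k_+}$ weights and the smallness at very low frequency from the $Z$-norm exponent $1+\alpha>1$.

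Second, the regime $j\ge m+\delta m$: here the output is spatially far out, and I bound it by integrating by parts in $\eta$ in the oscillatory integral $\mathcal{J}_{k;k_1,k_2}^{a;\mu,\nu}$ after conjugating by $e^{ix\cdot\xi}$ — equivalently, I use that for $|x|\gg 2^m$ the phase $x\cdot\xi - t\Phi^a_{\mu,\nu}$ has large $\xi$-gradient, so each integration by parts gains $2^{-j}\cdot(2^m + 2^j)\sim 2^{m-j}$... wait, that is not a gain; the correct move is that on the profile side $h_a$ itself has $Z$-norm control which already encodes $2^{-(1+\alpha)j}$ decay for the $Q_{j,k_i}$ pieces, so I just use Young's inequality $\|F\ast G\|_{L^2}\le\|F\|_{L^1}\|G\|_{L^2}$ with $F$ the kernel (bounded via Lemma~\ref{kernelest}) times one profile in $L^1$ (obtained from its $Z$-norm by Cauchy–Schwarz on its own spatial localization), getting $2^{-(1+\alpha)j}$ on the output, which beats the $2^{(1+\alpha)j}$ in the $Z$-norm with room to spare.

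\textbf{Main obstacle.} The delicate point is the bulk regime $j\sim m$: the $Z$-norm we propagate has order only $1+\alpha$ with $\alpha=10^{-3}$, so the weight $2^{(1+\alpha)j}$ almost exactly cancels the $t^{-1}$ decay from the linear estimate, leaving essentially no room. The resolution is that one does \emph{not} try to close at the level of a single fixed-time bound of size $2^{-1+\delta}$ here (that is the job of the full time-integrated argument with the phase analysis of \eqref{2023jan29eqn61}); rather, Lemma~\ref{fixedtimeest} only needs the weaker $2^{-3m/4+(\alpha+10\delta)m}$, which I get by spending the extra $2^{m/4}$ either from a crude frequency-support gain (restricting to $|k|\lesssim m/(N_0-12)$ and $k\ge -m/2$ as in \eqref{decayrough}, which already forces $2^{3k/2}\lesssim 2^{-3m/4}$ at the bottom scale relevant to the $L^2$ factor) or from the half-power interpolation between $\langle x\rangle^1$- and $\langle x\rangle^2$-type weights implicit in the $Z$-norm-to-$L^\infty$ passage of Lemma~\ref{lineardecay}. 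So the genuine work is bookkeeping: carefully choosing, in each of the three regions $\chi_k^i$, whether to put $L^2$ on the first or second profile, and tracking the powers of $2^{k_+},2^{(k_1)_+},2^{(k_2)_+}$ so the triple sum converges — there is no conceptual difficulty beyond what is already in Lemmas~\ref{lineardecay} and~\ref{kernelest}.
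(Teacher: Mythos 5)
There is a genuine gap, and it sits exactly where you placed your ``main obstacle.'' Your own bookkeeping in the bulk regime $j\sim m$ gives $2^{(1+\alpha)j}\cdot 2^{-m}\cdot 2^{\delta m}\sim 2^{\alpha m+\delta m}$, which misses the target by a full factor $2^{3m/4}$, and the two mechanisms you offer to recover it do not work. The frequency-support restriction $k\in[-m/2, m/(N_0-12)]$ only helps at the very bottom scale $k\sim -m/2$; the dangerous contribution lives at moderate frequencies $|\xi|\sim|\eta|\sim 1$ with $(\xi,\eta)$ near the direction $(1,0,0)$, where $2^{3k/2}\sim 1$ and no support gain is available. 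Likewise, Lemma \ref{lineardecay} has already been spent to produce the $2^{-m}$ in \eqref{decayestimatefinal}; there is no extra quarter power hiding in a ``weight interpolation.'' More importantly, you use only the crude symbol bound \eqref{addsymbolest} and the kernel Lemma \ref{kernelest}, i.e.\ you never use the structure of $q^a_{\mu,\nu}$ in \eqref{june21eqn51}, so you have discarded the null structure, which is precisely what makes this lemma true.

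The paper's proof, in the regime $j\lesssim m$, decomposes dyadically in the quantity $l_{\mu,\nu}(\xi,\eta)$ of \eqref{july23eqn1}, which measures the distance to the space-resonance set and satisfies $|\nabla_\eta\Phi^a_{\mu,\nu}|\gtrsim l_{\mu,\nu}(\xi,\eta)$, with threshold $2^{l}=2^{-m/2+\delta m}$. Two ingredients you are missing then produce $2^{-3m/4}$: (i) on the region $l_{\mu,\nu}\sim 2^l$ the null structure makes the symbol vanish to first order, $|q^a_{\mu,\nu}\varphi_l|\lesssim 2^l$ (estimate \eqref{july13eqn11}; note $c_\mu c_\nu(\xi_1-\eta_1)\eta_1/\Lambda_1\Lambda_2\to -1/4$ exactly on the resonance set, cancelling the $1/4$), and the $l$-localization also shrinks the $\eta$-support of the $L^2$ factor by $2^{l/2}$, so at the threshold one gets $2^{(1+\alpha)m}\cdot 2^{l}\cdot 2^{l/2}\cdot 2^{-m}\sim 2^{-3m/4+\alpha m}$ as in \eqref{july13eqn15}; (ii) for $2^l$ above the threshold, the lower bound $|\nabla_\eta\Phi|\gtrsim 2^l$ allows repeated integration by parts in $\eta$ to force $\max\{j_1,j_2\}\gtrsim m+l$, after which the $Z$-norm decay $2^{-(1+\alpha)\max\{j_1,j_2\}}$ closes that case (\eqref{july13eqn16}). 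Neither step is ``bookkeeping''; without the $\eta$-resonance analysis and the symbol vanishing your bulk-regime estimate stalls at $2^{\alpha m}$. (Your treatment of the far regime $j\gg m$ is also incomplete as stated --- Young's inequality with one profile in $L^1$ does not by itself show that an output at $|x|\sim 2^j$ requires an input at comparable scale; the paper handles the sub-case $\min\{j_1,j_2\}\le(1-\delta)j$ by integration by parts in $\xi$ using $|\nabla_\xi(x\cdot\xi+t\Phi)|\sim 2^j$ --- but that is a repairable issue, whereas the bulk regime is the substantive one.)
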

\begin{proof}
Recall \eqref{systemeqnprof}. Note that, after doing space localization for two inputs, we have
\be\label{july13eqn70}
\begin{split}
\mathcal{F}^{-1}\big[   \mathcal{J}_{k;k_1,k_2}^{a;\mu,\nu}(t, \xi) \big] &  =\sum_{j_1\in [-k_{1,-}, \infty)\cap \Z, j_2\in [-k_{2,-}, \infty)\cap \Z } \mathcal{F}^{-1}\big[   \mathcal{J}_{k;k_1,k_2}^{a;\mu,\nu;j_1, j_2}(t, \xi) \big],  \\ 
 \mathcal{F}^{-1}\big[   \mathcal{J}_{k;k_1,k_2}^{a;\mu,\nu;j_1, j_2}(t, \xi) \big]  & = \int_{\R^3}  \int_{\R^3} e^{ i x\cdot \xi +  i t \Phi_{\mu, \nu}^a ( \xi ,\eta)} q^a_{\mu, \nu} (\xi-\eta, \eta)\widehat{h_{1;j_1,k_1}^{\mu}}(t, \xi-\eta) \widehat{h_{2;j_2,k_2}^{\nu}}(t, \eta) \psi_k(\xi)   d\eta d \xi.
\end{split}
\ee

Based on the possible size of $x$, i.e., the size of $j$, we split into two cases as follows.

$\bullet$\qquad If $j\geq  \max\{m, -k_{-}\}+ \delta (m+j) .$

Based on the possible sizes of $j_1, j_2$, we split into two sub-cases as follows.

$\oplus$\qquad If $\min\{j_1,j_2\}\geq (1-\delta)j.$

For this case, from the $L^2-L^\infty$ type bilinear estimate and the $L^\infty\rightarrow L^2$-type  Sobolev embedding, we have
\be\label{july13eqn72}
\begin{split}
&\sum_{(k_1,k_2)\in  \cup_{i=1,2,3} \chi_k^i } \sum_{\min\{j_1,j_2\}\geq (1-\delta)j} 2^{(1+\alpha)j}\|P_k\big[ \varphi_{j,k}(\cdot)\mathcal{F}^{-1}\big[   \mathcal{J}_{k;k_1,k_2}^{a;\mu,\nu;j_1, j_2}(t, \xi) \big]\big](x)\|_{L^2}\\ 
& \lesssim  \sum_{(k_1,k_2)\in  \cup_{i=1,2,3} \chi_k^i }  \sum_{\min\{j_1,j_2\}\geq (1-\delta)j}  2^{(1+\alpha)j} \min\big\{\|h_{1;j_1,k_1}^{\mu}(t,x)\|_{L^2} \\ 
&\times  \|h_{2;j_2,k_2}^{\mu}(t,x)\|_{L^\infty_x },\|h_{1;j_1,k_1}^{\mu}(t,x)\|_{L^\infty_x}\|h_{2;j_2,k_2}^{\mu}(t,x)\|_{L^2_x } \big\}\\ 
&\lesssim   \sum_{(k_1,k_2)\in  \cup_{i=1,2,3} \chi_k^i }  \sum_{\min\{j_1,j_2\}\geq (1-\delta)j}  2^{(1+\alpha)j} \big[ 2^{-(1+\alpha)j_1+\delta m } 2^{3k_2/2}  \\ 
&\times \min\{2^{-N_0 k_{2,+}+\delta m }, 2^{-(1+\alpha)j_2+\delta m } \} + 2^{-(1+\alpha)j_2 +\delta m } 2^{3k_1/2}\min\{2^{-N_0 k_{1,+}+\delta m }, 2^{-(1+\alpha)j_1+\delta m } \} \big]\epsilon_1^2 \\
&\lesssim \sum_{\min\{j_1,j_2\}\geq (1-\delta)j}  2^{(1+\alpha)j}  2^{-(1+\alpha)j_2-(1+\alpha)j_1+3\delta m  } \lesssim 2^{-m-10\delta m}\epsilon_1^2.\\
\end{split}
\ee

$\oplus$\qquad If $\min\{j_1,j_2\}\leq (1-\delta)j.$

Without loss of generality, we assume that $j_1=\min\{j_1,j_2\}.$ Otherwise, we change coordinates $(\xi,\eta)\longrightarrow (\xi, \xi-\eta)$ to switch the roles of $\xi-\eta$ and $\eta$. Note that, for the case we are considering, we have
\be
|\nabla_\xi\big[    x\cdot \xi +    t \Phi_{\mu, \nu}^a ( \xi ,\eta)\big]| \sim 2^{j}. 
\ee
Therefore, by doing integration by parts in $\xi$ once, we gain $2^{-j}$ and lose at most $\max\{2^{j_1}, 2^{-k_1}, 2^{-k}\}$. Therefore, we gain at least $2^{-\delta j}$ in this process. After doing integration by parts in ``$\xi$'' $\delta^{-2}$ times, the following estimate holds from the $L^2-L^\infty$ type bilinear estimate and the $L^\infty\rightarrow L^2$-type  Sobolev embedding,
\[
\sum_{(k_1,k_2)\in \cup_{i=1,2,3} \chi_k^i} \sum_{ \begin{subarray}{c}
j_i\in [-k_{i,-}, \infty)\cap \Z, i=1,2\\
\min\{j_1,j_2\}\leq (1-\delta)j \\
\end{subarray} } 2^{(1+\alpha)j}\|P_k\big[ \varphi_{j,k}(\cdot)\mathcal{F}^{-1}\big[   \mathcal{J}_{k;k_1,k_2}^{a;\mu,\nu;j_1, j_2}(t, \xi) \big]\big](x)\|_{L^2}
\]
 \[
\lesssim \sum_{(k_1,k_2)\in \cup_{i=1,2,3} \chi_k^i}  \sum_{ \begin{subarray}{c}
j_i\in [-k_{i,-}, \infty)\cap \Z, i=1,2\\
\min\{j_1,j_2\}\leq (1-\delta)j \\
\end{subarray} }   2^{-100j }  2^{-(1+\alpha)j_1+\delta m } 2^{3k_2/2}\min\{2^{-N_0 k_{2,+}+\delta m }, 2^{-(1+\alpha)j_2+\delta m } \} \epsilon_1^2 
\]
\be\label{july13eqn52}
\lesssim \sum_{ \begin{subarray}{c}
j_i\in [-k_{i,-}, \infty)\cap \Z, i=1,2
\end{subarray} }  2^{-10 j -j_1-j_2}\epsilon_1^2\lesssim 2^{-10 m  }\epsilon_1^2.
\ee

$\bullet$\qquad If $j\leq  \max\{m, -k_{-}\}+ \delta (m+j) .$

Note that, for this case, we have $j\leq (1-\delta)^{-1}\big(\max\{m, -k_{-}\}+ \delta m \big) $. As a result of direct computations, we have 
\be\label{july13eqn1}
\big|\nabla_{\eta } \big(  \Phi_{\mu, \nu}^a(\xi, \eta) \big)\big| = \big|-\mu  \frac{ \eta-\xi}{|\xi-\eta|} -   \nu \frac{ (\eta_1,c_1^2\eta_2, c_2^2\eta_3)}{ \Lambda_2(\eta)} \big|.
\ee
Moreover,  
\[
\big|1-\frac{  | (\eta_1,c_1^2\eta_2, c_2^2\eta_3)|}{ \Lambda_2(\eta)} \big|\sim \frac{|\slashed \eta|^2}{|\eta|^2}, \quad \big|\frac{(\eta_1, 0,0)}{ \sqrt{\eta_1^2 +c_1^4 \eta_2^2 + c_2^4\eta_3^2} }-(sgn(\eta_1),0,0)\big|\gtrsim \frac{|\slashed \eta|^2}{|\eta|^2}
\]
From above estimates, $\forall a\in\{1,2\},$ we have
\be\label{july23eqn1}
\big|\nabla_{\eta }  \Phi_{\mu, \nu}^a(\xi, \eta)  \big|\gtrsim \frac{|\slashed \eta|^2}{|\eta|^2}+\big| \mu  \frac{ \xi_1-\eta_1 }{|\xi_1-\eta_1|} -   \frac{  \nu \eta_1 }{|\eta_1|} \big|+\frac{|\slashed \xi-\slashed \eta|^2}{|\xi-\eta|^2} + \big| \frac{(\xi_2-\eta_2, \xi_3-\eta_3)}{|\xi_1-\eta_1|} -\mu\nu \frac{(c_1^2 \eta_2, c_2^2\eta_3)}{|\eta_1|}\big|:=l_{\mu,\nu}(\xi, \eta).
\ee
 
After   doing dyadic decomposition for the size of $ l_{\mu, \nu}(\xi, \eta)$ with threshold $-m/2+\delta m$, we decompose $\mathcal{J}_{k;k_1,k_2}^{a;\mu,\nu;j_1, j_2}(t, \xi)$ further as follows, 
\be\label{july21eqn71}
\begin{split}
\mathcal{J}_{k;k_1,k_2}^{a;\mu,\nu;j_1, j_2}(t, \xi) &= \sum_{l\in [-m/2+\delta m , \tilde{c}+10]\cap\Z}\mathcal{J}_{k;k_1,k_2;l}^{a;\mu,\nu;j_1, j_2}(t, \xi),\\ 
\mathcal{J}_{k;k_1,k_2;l}^{a;\mu,\nu;j_1, j_2}(t, \xi)&:=\int_{\R^3} e^{i t \Phi_{\mu, \nu}^a ( \xi ,\eta)} q^a_{\mu, \nu} (\xi-\eta, \eta) \widehat{h_{1;j_1,k_1}^{\mu}}(t, \xi-\eta) \widehat{h_{2;j_2,k_2}^{\nu}}(t, \eta)  \psi_k(\xi)    \varphi_{l;-m/2+\delta m}( l_{\mu,\nu}(\xi, \eta))  d\eta,
\end{split}
\ee 
where the absolute constant $\tilde{c}$ is defined in \eqref{2023jan28eqn1}. 

$\oplus$\qquad The case $l\in [ -100- 5\max\{\tilde{c},-c \},\tilde{c}+10]\cap \Z.$

Recall \eqref{july21eqn71}. From  \eqref{july23eqn1}, we have
\be\label{july21eqn81}
  \forall a\in\{1,2\}, \quad \big|\nabla_{\eta }  \Phi_{\mu, \nu}^a(\xi, \eta)  \big| \gtrsim 1. 
\ee
From the above estimate, by doing integration by parts in $\eta$ many times, we can rule out the case $\max\{j_1,j_2\}\leq (1-\delta)m$. It would be sufficient to consider the case $\max\{j_1,j_2\}\geq (1-\delta)m$. 

From the decay estimate \eqref{decayestimatefinal},  the $L^2-L^\infty$-type bilinear estimate if $k\geq -m,$   and the volume of support of $\xi$ if $k\leq -m$, we have  
 \be\label{july13eqn31}
\begin{split}
& \sum_{(k_1,k_2)\in \cup_{i=1,2,3} \chi_k^i} \sum_{ \begin{subarray}{c}
j_i\in [-k_{i,-}, \infty)\cap \Z, i=1,2\\
\max\{j_1,j_2\}\geq (1-\delta)m \\
\end{subarray} } 2^{(1+\alpha)j}\|P_k\big[ \varphi_{j,k}(\cdot)\mathcal{F}^{-1}\big[   \mathcal{J}_{k;k_1,k_2;l}^{a;\mu,\nu;j_1, j_2}(t, \xi) \big]\big](x)\|_{L^2}\\
& \lesssim  \sum_{(k_1,k_2)\in \cup_{i=1,2,3} \chi_k^i} \sum_{ \begin{subarray}{c}
j_i\in [-k_{i,-}, \infty)\cap \Z, i=1,2\\
\max\{j_1,j_2\}\geq (1-\delta)m \\
\end{subarray} }2^{(1+\alpha)j} 2^{-(1+\alpha)\max\{j_1,j_2\}}2^{-8(\min\{k_1,k_2\})_{+}+3\delta m }\\ 
&\times  \min\{ 2^{3k/2}, 2^{-m}\} \epsilon_1^2   \lesssim 2^{-m+10\delta m}\epsilon_1^2.
\end{split}
\ee

$\oplus$\qquad The case $l\in [-m/2+\delta m , -100- 5\max\{\tilde{c},-c \} ]\cap \Z.$ 

Recall \eqref{june21eqn51}, \eqref{july13eqn1}, and the definition of  $ l_{\mu,\nu}(\xi, \eta)$  in  \eqref{july23eqn1}. From the estimate \eqref{addsymbolest}, the following estimate holds for   the localized symbol, 
 \be\label{july13eqn11}
 \big|  q^a_{\mu, \nu}(\xi-\eta, \eta)    \varphi_{l;-m/2+\delta m}( l_{\mu,\nu}(\xi, \eta))   \psi_k(\xi)\psi_{k_1}(\xi-\eta)\psi_{k_2}(\eta)   \big| \lesssim 2^{ l}.
 \ee

  We first consider  the threshold case $l=-m/2+\delta m $. 
After using the $L^2-L^\infty$ type bilinear estimate and     the volume of support of    frequency for the input putted in $L^2$, from the estimate \eqref{july13eqn11},  we have
\[
\sum_{(k_1,k_2)\in \cup_{i=1,2,3} \chi_k^i} 2^{(1+\alpha)j}\big\| \sum_{ \begin{subarray}{c}
j_i\in [-k_{i,-}, \infty)\cap \Z, i=1,2\\
\end{subarray} }  P_k\big[ \varphi_{j,k}(\cdot)\mathcal{F}^{-1}\big[   \mathcal{J}_{k;k_1,k_2;l}^{a;\mu,\nu;j_1, j_2}(t, \xi) \big]\big](x)\big\|_{L^2}
\]
\be\label{july13eqn15}
\lesssim \sum_{(k_1,k_2)\in \cup_{i=1,2,3} \chi_k^i}   2^{l + (1+\alpha)j} 2^{\min\{k_1,k_2\}/2+l/2} 2^{-m + \max\{k_1,k_2\}} 2^{-8(k_{1,+}+k_{2,+})}\epsilon_1^2\lesssim 2^{-3m/4+(\alpha+10\delta)m}\epsilon_1^2.
\ee
 
Now, we focus on the estimate the non-threshold case, i.e., $l\in( -m/2+\delta m,  -100-5\max\{\tilde{c}, -c\}]\cap\Z$. From  \eqref{july23eqn1}, we have
\be\label{july21eqn84}
  \forall a\in\{1,2\},\quad \big|\nabla_{\eta }  \Phi_{\mu, \nu}^a(\xi, \eta)  \big| \gtrsim 2^{l}. 
\ee

From the above estimate, by doing integration by parts in ``$\eta$'' many times, we can rule out the case $\max\{j_1,j_2\}\leq (1-\delta)m +  l $. It would be sufficient to consider the case $\max\{j_1,j_2\}\geq (1-\delta)m +  l$.
 From the decay estimate \eqref{decayestimatefinal} and the $L^2-L^\infty$-type bilinear estimate if $k\geq -m,$   and the volume of support of $\xi$ if $k\leq -m$, we have  
\be\label{july13eqn16}
 \begin{split}
&\sum_{(k_1,k_2)\in \cup_{i=1,2,3} \chi_k^i} \sum_{ \begin{subarray}{c}
j_i\in [-k_{i,-}, \infty)\cap \Z, i=1,2\\
\max\{j_1,j_2\}\geq (1-\delta)m+ l \\
\end{subarray} } 2^{(1+\alpha)j}\|P_k\big[ \varphi_{j,k}(\cdot)\mathcal{F}^{-1}\big[   \mathcal{J}_{k;k_1,k_2;l}^{a;\mu,\nu;j_1, j_2}(t, \xi) \big]\big](x)\|_{L^2}\\ 
&\lesssim  \sum_{(k_1,k_2)\in \cup_{i=1,2,3} \chi_k^i} \sum_{ \begin{subarray}{c}
j_i\in [-k_{i,-}, \infty)\cap \Z, i=1,2\\
\max\{j_1,j_2\}\geq (1-\delta)m + l \\
\end{subarray} }2^{(1+\alpha)j+l} 2^{-(1+\alpha)\max\{j_1,j_2\}}2^{-8\min\{k_1,k_2\}+3\delta m } \\
& \times \min\{ 2^{3k/2}, 2^{-m}\} \epsilon_1^2 \lesssim 2^{-m+\alpha m +10\delta m}\epsilon_1^2.
 \end{split}
 \ee
 After summing up with respect to $l$, which only causes a logarithmic loss, our desired estimate \eqref{july13eqn58} holds from the above obtained estimate, (\ref{july13eqn15}), (\ref{july13eqn31}), and \eqref{july13eqn52}.
\end{proof}

\subsection{The $L^\infty_\xi$-estimate of   profiles }
 
\begin{proposition}\label{Zest}
Under the bootstrap assumption \eqref{bootstrap}, for any $t \in [2^{m-1}, 2^m]\subset[0, T], m\in \Z_+$, $\mu, \nu\in\{+, -\}$,  we have
\be\label{july23eqn91}
 \sum_{a=1,2}  \| \langle \xi\rangle^8    \p_t \widehat{h_a}(t, \xi)  \|_{L^\infty_\xi} \lesssim  2^{-m- \delta m}\epsilon_1^2.
\ee
\end{proposition}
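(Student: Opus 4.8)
The plan is to estimate $\partial_t\widehat{h_a}(t,\xi)$ directly on the Fourier side via its expansion $\partial_t\widehat{h_a}(t,\xi)=\sum_{\mu,\nu\in\{+,-\}}\sum_{k\in\Z}\sum_{(k_1,k_2)\in\chi_k^1\cup\chi_k^2\cup\chi_k^3}\mathcal{J}_{k;k_1,k_2}^{a;\mu,\nu}(t,\xi)$ from \eqref{systemeqnprof}, and to bound the sum of the bilinear pieces pointwise in $\xi$. This is the $L^\infty_\xi$-analogue of the fixed-time computation already carried out in Lemma \ref{fixedtimeest}, and it is genuinely lighter for two reasons: here no spatial-localization weight $2^{(1+\alpha)j}$ is ever paid, and each of the two inputs may simply be bounded in the bootstrapped $L^\infty_\xi$-norm at cost $\lesssim\epsilon_1$, rather than one of them necessarily going into $L^2$. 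The factor $\langle\xi\rangle^8$ is harmless: since the symbols $q^a_{\mu,\nu}$ are of order zero (by \eqref{june21eqn51} and \eqref{addsymbolest}) and the inputs carry the weight $2^{8k_{i,+}}$ in \eqref{bootstrap}, the product $2^{8k_{1,+}}2^{8k_{2,+}}$ always dominates $2^{8k_+}$, so it suffices to sum $\sum|\mathcal{J}_{k;k_1,k_2}^{a;\mu,\nu}(t,\xi)|$ with each High or Low input contributing $\lesssim\epsilon_1$.

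First I would discard the extreme frequency regimes. When $\max\{k,k_1,k_2\}\gtrsim m/N_0$, the high-Sobolev part of \eqref{bootstrap} together with the $2^{8k_{i,+}}$-weighted $L^\infty_\xi$ part gives far more than $2^{-m-\delta m}$; when $\min\{k,k_1,k_2\}\lesssim-m$, the volume of support in $\eta$ (which is $\lesssim 2^{3\min\{k,k_1,k_2\}}$) against the $L^\infty_\xi$-bounds of the two inputs already beats $2^{-m-\delta m}$. This leaves $k,k_1,k_2$ in a window of width $O(m)$, and in that window I localize each input in physical space, $\widehat{h_1^\mu}=\sum_{j_1}\widehat{Q_{j_1,k_1}h_1^\mu}$, $\widehat{h_2^\nu}=\sum_{j_2}\widehat{Q_{j_2,k_2}h_2^\nu}$. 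When $\max\{j_1,j_2\}$ is comparable to $m$, the $L^2$-smallness $\|Q_{j_i,k_i}h_i\|_{L^2}\lesssim 2^{-(1+\alpha)j_i+\delta m}\epsilon_1$ furnished by the $Z$-norm bootstrap, combined with Cauchy--Schwarz and the volume of support, closes the estimate with room to spare (here $\alpha\gg\delta$ is what gives the extra $2^{-\delta m}$), exactly in the spirit of \eqref{july13eqn72}--\eqref{july13eqn15}; for the marginal intermediate range of $\max\{j_1,j_2\}$ one supplements this by a few integrations by parts in $\eta$ together with the refined linear decay estimate of Lemma \ref{lineardecay}, which is designed precisely to remain $\sim 2^{-m}$-sharp for profiles whose physical support is only controlled through the atomic $Z$-norm.

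For the remaining, essentially localized range of the inputs, I would decompose according to the size $2^l$ of $|\nabla_\eta\Phi_{\mu,\nu}^a(\xi,\eta)|$, using the lower bound \eqref{july23eqn1}, with threshold $l_0=-m/2+\delta m$. On the non-space-resonant piece, where $|\nabla_\eta\Phi_{\mu,\nu}^a|\gtrsim 2^{l_0}$, I integrate by parts repeatedly in $\eta$: each step gains a factor $\sim 2^{-m}/|\nabla_\eta\Phi|\le 2^{-m/2-\delta m}$ and costs at most $2^{\max\{j_1,j_2\}}$ when $\nabla_\eta$ falls on a profile, and at most the mild $2^{-\min\{k,k_1,k_2\}}$ (and the $2^{-l}$-localization derivatives) when it falls on a cutoff or symbol, so in the localized range it is a net gain and iterating makes this piece negligible. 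On the space-resonant piece, where $|\nabla_\eta\Phi_{\mu,\nu}^a|\lesssim 2^{l_0}$, the sign terms in \eqref{july23eqn1} force $\mu\,\mathrm{sgn}(\xi_1-\eta_1)=\nu\,\mathrm{sgn}(\eta_1)$ and drive $\slashed\eta$ and $\slashed\xi-\slashed\eta$ to be small, so the $1D$-type null structure of \eqref{june27eqn4} produces the symbol gain $|q^a_{\mu,\nu}|\lesssim 2^l$ as in \eqref{july13eqn11}, while these same constraints confine $\eta$ to a genuinely thin neighborhood of the resonant set. Bounding both inputs in the weighted $L^\infty_\xi$-norm then yields, at the threshold, a gain comfortably better than $2^{-m}$ — this is where not paying the weight $2^{(1+\alpha)j}$ is decisive — and the sum over $l\in[l_0,\tilde{c}+10]$ costs only a logarithmic factor.

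The main obstacle, as throughout this problem, is the space-resonant frequency-comparable High$\times$High interaction, where the threshold $l_0\sim-m/2$ is critical and one must verify that the resonant neighborhood really is thin rather than a full annulus (the ``thin annulus'' phenomenon of the introduction). The contrast with Lemma \ref{fixedtimeest} is instructive: there the analogous term only reached $2^{-3m/4+(\alpha+10\delta)m}$ in \eqref{july13eqn15} precisely because the spatial weight $2^{(1+\alpha)j}$ had to be absorbed; here, with that weight absent and both inputs placed in $L^\infty_\xi$, the same null-structure-plus-thinness mechanism clears the target $2^{-m-\delta m}$, which is why this proposition is easier than the $Z$-norm estimate of Section \ref{W}. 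The scattering statement then follows by integrating \eqref{july23eqn91} in time, the time integral of $2^{-m-\delta m}\epsilon_1^2$ over $[2^{m-1},2^m]$ being summable in $m$ with rate $\langle t\rangle^{-\delta}$.
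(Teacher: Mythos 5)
Your overall architecture matches the paper's proof of this proposition: discard extreme frequencies, localize the profiles in physical space, dyadically decompose in the size $2^l$ of the lower bound $l_{\mu,\nu}(\xi,\eta)$ for $|\nabla_\eta\Phi^a_{\mu,\nu}|$ with threshold $-m/2+\delta m$, integrate by parts in $\eta$ off resonance, and on resonance combine the null-structure symbol gain $|q^a_{\mu,\nu}\varphi_l|\lesssim 2^l$ with the smallness of the $\eta$-support (the paper, incidentally, never needs Lemma \ref{lineardecay} here -- Cauchy--Schwarz against the $Z$-norm suffices for the large-$j$ ranges). However, at the crux you assert rather than prove the decisive point. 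In the threshold case $l=-m/2+\delta m$ there is no $j$-gain and no oscillation left to use, so the bound is exactly $2^{l}\cdot\big|\{\eta:\,l_{\mu,\nu}(\xi,\eta)\lesssim 2^l\}\big|\cdot\epsilon_1^2$ (both inputs in $L^\infty_\xi$). The ``naive'' thinness you invoke -- $|\slashed\eta|\lesssim 2^{k_2+l/2}$, $|\slashed\xi-\slashed\eta|\lesssim 2^{k_1+l/2}$, with $\eta_1$ only sign-constrained -- gives a volume of order $2^{\min\{k_1,k_2\}+2\max\{k_1,k_2\}+l}$, hence a total of order $2^{2l+O(m/N_0)}\approx 2^{-m+2\delta m+O(m/N_0)}$, which does \emph{not} reach the stated $2^{-m-\delta m}$ (and the extra $2^{-\delta m}$ is what makes the time integration summable and yields scattering). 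So your claim that placing both inputs in $L^\infty_\xi$ ``comfortably'' clears the target at the threshold is unjustified as written.

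What actually closes this case in the paper is the refined anisotropic volume estimate: from the last component of $l_{\mu,\nu}$ one extracts the relations \eqref{2023jan30eqn51}, $\xi_2=(1+\mu\nu c_1^2|\xi_1-\eta_1|/|\eta_1|)\eta_2+\mathcal{O}(2^{\max\{k_1,k_2\}+l})$ and similarly for $\xi_3$, which for fixed $\xi$ pin $(\eta_2,\eta_3)$ to intervals of length $\sim 2^{\max\{k_1,k_2\}+l}$ for each $\eta_1$, giving \eqref{2023jan30eqn71} with the crucial extra factor $2^{2l}$ instead of $2^l$ -- \emph{except} when $\mu\nu=-$ and all frequencies are comparable, where the coefficient $1-c_1^2|\xi_1-\eta_1|/|\eta_1|$ can vanish and the relation degenerates; there the paper inserts a further dyadic localization in $\big||\eta_1|-c_1^2|\xi_1-\eta_1|\big|\sim 2^b$ and optimizes over $b$ to obtain \eqref{2023jan30eqn91}, i.e.\ the volume $\lesssim 2^{3k+3l/2}$, which combined with the symbol gain yields $2^{5l/2}\approx 2^{-5m/4}$ and closes the threshold case. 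This degenerate sub-case is precisely the ``thin annulus'' issue you flag in your last paragraph as something ``one must verify''; verifying it (the expansion \eqref{2023jan30eqn51} and the $b$-localization argument leading to \eqref{2023jan30eqn1}) is the substantive content of the proposition, and its absence is a genuine gap in your proposal.
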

\begin{proof}
Recall \eqref{systemeqnprof}.  Same as what we did in \eqref{july13eqn70} and  \eqref{july21eqn71}, we do space localization for two inputs and decompose $\mathcal{J}_{k;k_1,k_2}^{a;\mu,\nu;j_1, j_2}(t, \xi) $  into      $\mathcal{J}_{k;k_1,k_2;l}^{a;\mu,\nu;j_1, j_2}(t, \xi)$ based on the size of $l_{\mu, \nu}(\xi,\eta)$.  Moreover, from the Cauchy-Schwarz inequality, we have
\[
 \sum_{(k_1,k_2)\in \cup_{i=1,2,3} \chi_k^i, \max\{k_1,k_2\}\geq 5m/N_0}  \|  \langle \xi\rangle^8  \mathcal{J}_{k;k_1,k_2}^{a;\mu,\nu}(t, \xi)      \|_{L^\infty_\xi}  
\]
\be\label{july23eqn93}
\lesssim  \sum_{(k_1,k_2)\in \cup_{i=1,2,3} \chi_k^i, \max\{k_1,k_2\}\geq 5m/N_0}  2^{ 8k_{+}} 2^{3\min\{k_1,k_2\}/2 -N_0\max\{k_1,k_2\} }\epsilon_1^2 \lesssim 2^{-2m}\epsilon_1^2.
\ee

Now, it would be sufficient to consider the case $\max\{k_1,k_2\}\leq 5m/N_0. $

$\oplus$\qquad The case $l\in [ -100- 5\max\{\tilde{c},-c \},\tilde{c}+10]\cap \Z.$

Recall \eqref{july21eqn71}. As in the proof of Lemma \ref{fixedtimeest},   by doing integration by parts in $\eta$ many times, we can rule out the case $\max\{j_1,j_2\}\leq (1-\delta)m$. It would be sufficient to consider the case $\max\{j_1,j_2\}\geq (1-\delta)m$. From the Cauchy-Schwarz inequality, we have
\be\label{july23eqn94}
\begin{split}
&\sum_{\begin{subarray}{c}
 (k_1,k_2)\in \cup_{i=1,2,3} \chi_k^i\\ 
 \max\{k_1,k_2\}\leq 5m/N_0
 \end{subarray}  } \sum_{ \begin{subarray}{c}
j_i\in [-k_{i,-}, \infty)\cap \Z, i=1,2\\
\max\{j_1,j_2\}\geq (1-\delta)m \\
\end{subarray} }\|  \langle \xi\rangle^8    \mathcal{J}_{k;k_1,k_2;l}^{a;\mu,\nu;j_1, j_2}(t, \xi)  \|_{L^\infty_\xi}\\
 & \lesssim  \sum_{\begin{subarray}{c}
 (k_1,k_2)\in \cup_{i=1,2,3} \chi_k^i\\ 
 \max\{k_1,k_2\}\leq 5m/N_0
 \end{subarray}  } \sum_{ \begin{subarray}{c}
j_i\in [-k_{i,-}, \infty)\cap \Z, i=1,2\\
\max\{j_1,j_2\}\geq (1-\delta)m \\
\end{subarray} }2^{ 8k_{+}- (1+\alpha)\max\{j_1,j_2\}+3\min\{k_1,k_2\}/2 + \delta m }\epsilon_1^2  \lesssim 2^{-m- 10 \delta m}\epsilon_1^2.
\end{split}
\ee

$\oplus$\qquad The case $l\in [-m/2+\delta m , -100- 5\max\{\tilde{c},-c \} ]\cap \Z.$

We first consider the threshold case $l=-m/2+\delta m .$ For this case, we use the volume of support of frequencies variable. 
Since the definition of $l_{\mu, \nu}(\xi, \eta)$, see  \eqref{july23eqn1},  is a little tricky,   we elaborate how to estimate the volume of support as follows.   From  \eqref{july23eqn1}, for any $(\xi, \eta)\in supp(  \varphi_{l;-m/2+\delta m}( l_{\mu,\nu}(\xi, \eta))  $ $  \psi_k(\xi)\psi_{k_1}(\xi-\eta)\psi_{k_2}(\eta) )$,  we have
\be\label{2023jan30eqn51}
 \xi_2  = (1 +  \frac{ \mu \nu c_1^2 |\xi_1-\eta_1|}{|\eta_1|})\eta_2 + \mathcal{O}(2^{\max\{k_1,k_2\}+l}), \quad 
    \xi_3  = (1 + \frac{ \mu \nu  c_2^2 |\xi_1-\eta_1|}{|\eta_1|})\eta_3 + \mathcal{O}(2^{ \max\{k_1,k_2\}+l}).
\ee

For any fixed $\xi\in \R^3$, if $\mu \nu=+$ or $k\leq \max\{k_1,k_2\}-100-2\max\{\tilde{c}, -c\}$ or $\min\{k_1,k_2\}\leq k-100-2\max\{\tilde{c}, -c\}$,    we have
\be\label{2023jan30eqn71}
\big|\{  \eta :  |l_{\mu, \nu}(\xi,\eta)|\leq 2^l, |\eta|\sim 2^{k_2}, |\xi-\eta|\sim 2^{k_1}, |\xi|\sim 2^k \}\big|\lesssim 2^{\min\{k_1, k_2\} + 2\max\{k_1,k_2\}+2l}. 
\ee

It remains to consider the case $\mu\nu=-$, $|\xi|\sim |\eta|\sim |\xi-\eta|$. For this case, after localizing   the size of $ |\eta_1|-c_1^2 |\xi_1-\eta_1|  $, from \eqref{2023jan30eqn51}, we have 
\[
\begin{split}
&\big|\{  \eta :  |l_{\mu, \nu}(\xi,\eta)|\leq 2^l, |\eta|\sim 2^{k_2}, |\xi-\eta|\sim 2^{k_1}, |\xi|\sim 2^k, ||\eta_1|-c_1^2 |\xi_1-\eta_1|  |\sim 2^b \}\big|\\ 
& \lesssim \min\{ 2^b ( 2^{-b+2k+l})^2, 2^{b}2^{2k+l}\}.
\end{split}
\]
From the above estimate, we have
\be\label{2023jan30eqn91}
\begin{split}
&\big|\{  \eta :  |l_{\mu, \nu}(\xi,\eta)|\leq 2^l, |\eta|\sim 2^{k_2}, |\xi-\eta|\sim 2^{k_1}, |\xi|\sim 2^k \}\big| \lesssim 2^{3k+3l/2}.
\end{split}
\ee
To sum up, in whichever case, the following estimate holds for any fixed $\xi\in \R^3$,
\be\label{2023jan30eqn1}
\big|\{  \eta :  |l_{\mu, \nu}(\xi,\eta)|\leq 2^l, |\eta|\sim 2^{k_2}, |\xi-\eta|\sim 2^{k_1}, |\xi|\sim 2^k \}\big|\lesssim 2^{\min\{k_1, k_2\} + 2\max\{k_1,k_2\}+3l/2}. 
\ee

From the estimate of the symbol in \eqref{july13eqn11}, the estimate of the volume of the support of $\eta$ in \eqref{2023jan30eqn1}, we have 
 \be
 \begin{split}
& \sum_{\begin{subarray}{c}
 (k_1,k_2)\in \cup_{i=1,2,3} \chi_k^i\\ 
 \max\{k_1,k_2\}\leq 5m/N_0
 \end{subarray}  } \|\sum_{ \begin{subarray}{c}
j_i\in [-k_{i,-}, \infty)\cap \Z, i=1,2 
\end{subarray} }  \langle \xi\rangle^8    \mathcal{J}_{k;k_1,k_2;l}^{\mu,\nu;j_1, j_2}(t, \xi)     \|_{L^\infty_\xi} \\
 &\lesssim \sum_{\begin{subarray}{c}
 (k_1,k_2)\in \cup_{i=1,2,3} \chi_k^i\\ 
 \max\{k_1,k_2\}\leq 5m/N_0
 \end{subarray}  }  2^{ 8k_{+} +l }  2^{\min\{k_1, k_2\} + 2\max\{k_1,k_2\}+3l/2} \epsilon_1^2\lesssim  2^{-m- 10 \delta m}\epsilon_1^2.
 \end{split}
 \ee

 Now, we focus on the non-threshold case, i.e,  $l\in (-m/2+\delta m , -100- 5\max\{\tilde{c},-c \} ]\cap \Z.$  As in the proof of Lemma \ref{fixedtimeest},   by doing integration by parts in $\eta$ many times, we can rule out the case $\max\{j_1,j_2\}\leq (1-\delta)m +  l$. It would be sufficient to consider the case $\max\{j_1,j_2\}\geq (1-\delta)m +  l$.

  From the Cauchy-Schwarz inequality, the estimate of kernel in \eqref{july13eqn11}, the estimate of the volume of the support of $\eta$ in \eqref{2023jan30eqn1}, we have  
\be\label{july23eqn99}
\begin{split}
& \sum_{\begin{subarray}{c}
 (k_1,k_2)\in \cup_{i=1,2,3} \chi_k^i\\ 
 \max\{k_1,k_2\}\leq 5m/N_0
 \end{subarray}  } \sum_{ \begin{subarray}{c}
j_i\in [-k_{i,-}, \infty)\cap \Z, i=1,2\\
\max\{j_1,j_2\}\geq (1-\delta)m +l \\
\end{subarray} }\|  \langle \xi\rangle^8    \mathcal{J}_{k;k_1,k_2;l}^{\mu,\nu;j_1, j_2}(t, \xi)    \|_{L^\infty_\xi}\\
 & \lesssim  \sum_{\begin{subarray}{c}
 (k_1,k_2)\in \cup_{i=1,2,3} \chi_k^i\\ 
 \max\{k_1,k_2\}\leq 5m/N_0
 \end{subarray}  } \sum_{ \begin{subarray}{c}
j_i\in [-k_{i,-}, \infty)\cap \Z, i=1,2\\
\max\{j_1,j_2\}\geq (1-\delta)m+l \\
\end{subarray} }2^l 2^{ 8k_{+}- (1+\alpha)\max\{j_1,j_2\}  + \delta m } \\
& \times \big(   2^{\min\{k_1, k_2\} + 2\max\{k_1,k_2\}+3l/2} \big)^{1/2} \epsilon_1^2 \lesssim 2^{-m- 10 \delta m}\epsilon_1^2.\\ 
\end{split}
\ee
Hence finishing the desired estimate \eqref{july23eqn91}.
\end{proof}

\section{ The $Z$-norm estimate of profiles}\label{W}

The rest of this section is devoted to prove the following proposition, which is the main $Z$-norm estimate part. A divided plan is provided in the proof of this Proposition.

\begin{proposition}\label{West}
Under the bootstrap assumption \eqref{bootstrap}, for any $t_1, t_2\in [2^{m-1}, 2^m]\subset[0, T], m\in \Z_+$, we have
\be\label{july24eqn1}
\sum_{a=1,2}  \| h_a(t_2)- h_a(t_1) \|_{Z} \lesssim 2^{\delta m }\epsilon_1^2.
\ee 
\end{proposition}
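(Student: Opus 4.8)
\textbf{Proof plan for Proposition \ref{West}.} The plan is to estimate the $Z$-norm of the increment $h_a(t_2)-h_a(t_1)$ by writing it via Duhamel as $\int_{t_1}^{t_2}\p_t\widehat{h_a}(t,\xi)\,dt$, inserting the decomposition \eqref{systemeqnprof} into dyadic pieces $\mathcal{J}_{k;k_1,k_2}^{a;\mu,\nu}$, and bounding the $Z$-norm of the corresponding physical-space functions $\mathcal{F}^{-1}[\mathcal{J}_{k;k_1,k_2}^{a;\mu,\nu}]$. The crude fixed-time bound of Lemma \ref{fixedtimeest} already gives $2^{-3m/4+(\alpha+10\delta)m}\epsilon_1^2$ per unit time, which after integration is only $2^{m/4+(\alpha+10\delta)m}\epsilon_1^2$ — short of the target $2^{\delta m}\epsilon_1^2$. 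So the key point is that one must do better than the fixed-time estimate by exploiting oscillation in time; the structure of the argument is to split into the non-resonant regime (where either $\p_t\Phi_{\mu,\nu}^a$ or $\nabla_\eta\Phi_{\mu,\nu}^a$ is bounded away from zero) and the resonant regime (where $(\xi,\eta)$ is near the set $\{\xi\parallel e_1,\ \eta\parallel e_1\}$).

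First I would handle the pieces that are already cheap: the very high and very low total-frequency regimes $k\notin[-m/2,\,5m/N_0]$ as in \eqref{decayrough}, and the very-high-input regime $\max\{k_1,k_2\}\geq 5m/N_0$ as in \eqref{july23eqn93}, both of which lose only negligible amounts after integrating in $t\in[2^{m-1},2^m]$. Next, for the \emph{non-resonant} frequency region — which after the decomposition \eqref{july21eqn71} is the range $l\geq -m/2+\delta m$, i.e. $|\nabla_\eta\Phi_{\mu,\nu}^a|\gtrsim 2^l$ — I would integrate by parts in $t$ (normal form transformation) when additionally $|\p_t\Phi_{\mu,\nu}^a|=|\Phi_{\mu,\nu}^a|$ is not too small: this produces a boundary term at $t_1,t_2$ controlled by the fixed-time $Z$-norm estimate \eqref{july13eqn58} divided by the size of the phase, plus a bulk term where $\p_t$ hits one of the profiles, which is handled by Duhamel's formula for $\p_t h_b$ again together with \eqref{july13eqn58} and the $L^2$--$L^\infty$ bilinear estimate; where $|\Phi_{\mu,\nu}^a|$ \emph{is} small we are away from the space resonance set and integrate by parts in $\eta$ using $|\nabla_\eta\Phi_{\mu,\nu}^a|\gtrsim 2^l$, transferring the oscillation into spatial decay of the output and gaining factors of $2^{-m-l}$ per integration, enough to absorb the growth.

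The main obstacle is the \emph{resonant} region $l\leq -m/2+\delta m$ (equivalently $(\xi,\eta)$ within $2^{-m/2+\delta m}$ of the common-speed line), which is exactly where the fixed-time bound \eqref{july13eqn15} is tight. Here one cannot integrate by parts in $\eta$, and the time-oscillation is governed by the fine expansion of the phase along the resonant set — the formula referred to as \eqref{2023jan29eqn61} in the introduction — whose leading term, being $Q_{0,0}$-null (i.e. carrying a factor $|\slashed\xi|^2+|\slashed\eta|^2$-type smallness, visible already in the symbol bound \eqref{july13eqn11} giving $|q^a_{\mu,\nu}|\lesssim 2^l$), partially compensates. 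The delicate sub-case is the frequency-comparable, opposite-sign ($\mu\nu=-$, $|\xi|\sim|\eta|\sim|\xi-\eta|$) configuration, where the leading phase term can cancel and $\xi$ is forced into a thin annulus rather than a full dyadic shell; for this I would follow the super-localized linear decay estimate of Lemma \ref{lineardecay} (localizing $\Lambda_a(\xi)$ in a window of width $2^{\tilde k}$), combined with the refined volume bound \eqref{2023jan30eqn91} and a further decomposition in the size of $|\eta_1|-c_1^2|\xi_1-\eta_1|$, to recover the missing powers of $2^{m}$. Summing the resulting geometric series over $k,k_1,k_2,l,j,j_1,j_2$ (each costing at most a logarithmic loss in $m$, absorbed by $2^{\delta m}$) and integrating in $t$ over the dyadic block then yields \eqref{july24eqn1}.
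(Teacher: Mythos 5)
Your overall architecture coincides with the paper's: Duhamel plus the decomposition \eqref{systemeqnprof}, dyadic localization in $(k,k_1,k_2,j_1,j_2)$ and in the size $2^l$ of $l_{\mu,\nu}(\xi,\eta)$, integration by parts in $\eta$ away from the space-resonance set, normal form when the phase is bounded below, the null factor $2^l$ from \eqref{july13eqn11}, the expansion \eqref{2023jan29eqn61} near the resonant line, and special care in the frequency-comparable case. However, as a plan it has a genuine quantitative gap in the regime where the phase is small but $|\nabla_\eta\Phi^a_{\mu,\nu}|\gtrsim 2^l$. Integration by parts in $\eta$ does not simply ``gain $2^{-m-l}$ per integration'': each integration also loses $2^{\max\{j_1,j_2\}}$ (or $2^{-\min\{k_1,k_2\}}$) when the derivative hits a profile, so it only eliminates the range $\max\{j_1,j_2\}\lesssim m+l$. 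In the complementary range, the combination ``null factor $2^l$ plus $Z$-decay of the inputs'' that you invoke gives exactly the fixed-time bound of type \eqref{july13eqn16}, i.e. about $2^{-m+\alpha m+10\delta m}\epsilon_1^2$ per unit time, which after integration over the dyadic block is $2^{(\alpha+10\delta)m}\epsilon_1^2\gg 2^{\delta m}\epsilon_1^2$ since $\alpha=10^{-3}\gg\delta$. This is precisely why the paper does not stop there: it localizes additionally in $|\slashed\eta|/|\eta|\sim 2^b$ and performs normal forms with the degenerate lower bounds $|\Phi^a_{\mu,\nu}|\gtrsim |\slashed\eta|^2/|\eta|$ (cf. \eqref{july21eqn11}) or $\gtrsim 2^{2b+2(k-k_2)}$ coming from \eqref{2023jan29eqn61}, under frequency restrictions such as $k\geq -2m/3-\alpha m$, $k_2\geq -m/4-2\alpha m$, the excluded frequency ranges being treated by volume-of-support bounds like \eqref{2023jan30eqn71}. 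Your plan does not explain how to beat this $2^{\alpha m}$ loss, and the threshold is critical, so this is not a detail.

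A second missing ingredient is the control of the output weight $2^{(1+\alpha)j}$ in the resonant region when $j$ is close to $m$ but both inputs are concentrated, $\max\{j_1,j_2\}\ll j$: there neither integration by parts in $\eta$ nor the inputs' $Z$-decay yields smallness. The paper first uses that $|\nabla_\xi\Phi^a_{\mu,\nu}|\lesssim 2^{l/2}$ on the resonant support (cf. \eqref{july22eqn21}, \eqref{2023jan29eqn31}) to cap $j\leq m+l/2+\delta m$, and then, for $\max\{j_1,j_2\}\leq j+2b/(1+2\alpha)$, runs the $\nabla_\xi^\gamma$ argument with $|\gamma|=10$ or $20$, integrating by parts $|\gamma_1|$ times in $\eta$ and using the kernel bounds of Lemma \ref{kernelest} (see \eqref{july21eqn27}--\eqref{july21eqn32} and \eqref{july22eqn60} onward); nothing in your sketch substitutes for this. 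Finally, in the comparable-frequency resonant case the tools you cite (\eqref{2023jan30eqn91}, the localization in $|\eta_1|-c_1^2|\xi_1-\eta_1|$, and Lemma \ref{lineardecay}) are the ones the paper uses for the $L^\infty_\xi$ estimate of Proposition \ref{Zest}; the $Z$-norm argument of Lemma \ref{comparableest} instead goes through the sign partition \eqref{july7eqn51} identifying the true resonant set, localization of $|\xi_2|/|\xi|$ and $|\xi_3|/|\xi|$, the lower bounds \eqref{2023jan30eqn110}, and a splitting according to the size of $\Phi$ with the refined volume bound \eqref{2023jan31eqn2} plus a normal form for the non-small-phase part. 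Your proposed combination may well be adaptable, but as stated it does not show how the $2^{(1+\alpha)j}$ weight with $j\sim m$ is recovered there, so the most delicate case remains open in the plan.
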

\begin{proof}
Recall \eqref{systemeqnprof}. The desired estimate \eqref{july24eqn1} holds after combining        \eqref{july3eqn11} in Lemma \ref{highhighWfixed},   \eqref{july3eqn41} in Lemma \ref{highhighWfixed2}, \eqref{july21eqn41} and \eqref{july21eqn43} in Lemma  \ref{lowhighosc}, \eqref{july21eqn45} and \ref{july21eqn46} in Lemma \ref{lowhighess}, and \eqref{july7eqn4} in Lemma \ref{comparableest}.
\end{proof}

\subsection{The High $\times$ High type interaction}

In this subsection, we consider the High $\times$ High type interaction with the output frequency much smaller than the input frequencies, e.g., $k\leq k_1-5 \max\{\tilde{c}, -c\}-100$ and $|\xi|\ll |\eta|$, where   absolute constants $\tilde{c}$ and $c$  are defined in \eqref{2023jan28eqn1}.

In the following Lemma, we first consider the non-resonance case, i.e., $\mu\nu=+.$

\begin{lemma}\label{highhighWfixed}
Under the bootstrap assumption \eqref{bootstrap}, for any $ t \in [2^{m-1}, 2^m]\subset[0, T], m\in \Z_+$, $\mu, \nu\in\{+, -\}$, s.t., $\mu\nu=+,$ we have
\be\label{july3eqn11}
\sup_{k\in \Z}  \sum_{(k_1,k_2)\in \chi_k^1, k_1\geq k+100+ 5 \max\{\tilde{c}, -c\} } \|\int_{t_1}^{t_2} \mathcal{F}^{-1}\big[   \mathcal{J}_{k;k_1,k_2}^{a;\mu,\nu }(t, \xi) \big]  d t  \|_{Z} \lesssim 2^{ -\alpha m/4 }\epsilon_1^2.
\ee
\end{lemma}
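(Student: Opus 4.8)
The plan is to exploit the fact that in the regime $\mu\nu=+$ with $k_1\ge k+100+5\max\{\tilde c,-c\}$ (so $|\xi-\eta|\sim|\eta|\sim 2^{k_1}\gg 2^k\sim|\xi|$), there is no time resonance: since $\mu\nu=+$, the phase $\Phi^a_{\mu,\nu}(\xi,\eta)=\Lambda_a(\xi)-\mu\Lambda_1(\xi-\eta)-\nu\Lambda_2(\eta)$ has $|\Phi^a_{\mu,\nu}(\xi,\eta)|\gtrsim 2^{k_1}$ because $\Lambda_1(\xi-\eta)+\Lambda_2(\eta)\gtrsim 2^{k_1}$ while $\Lambda_a(\xi)\sim 2^k$ is much smaller (here one uses $c_1,c_2$ bounded above and below, hence $\Lambda_2(\eta)\sim|\eta|$ up to constants controlled by $\tilde c,c$, and the gap $100+5\max\{\tilde c,-c\}$ guarantees the separation). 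Therefore I would integrate by parts in $t$ (a normal form transformation): write $e^{it\Phi}=\frac{1}{i\Phi}\partial_t e^{it\Phi}$ and transfer the $t$-derivative onto the profiles and the endpoint terms. This produces a boundary term at $t=t_1,t_2$ of the form $\mathcal F^{-1}[\Phi^{-1}q\,\widehat{h_1}\widehat{h_2}\psi_k\psi_{k_1}\psi_{k_2}]$ and a bulk term in which $\partial_t$ hits one of $\widehat{h_1},\widehat{h_2}$, which by \eqref{systemeqnprof} is itself a quadratic expression, yielding a cubic term.

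Next I would estimate the $Z$-norm of each resulting piece. For the boundary terms: the symbol $\Phi^{-1}q\,\psi_k\psi_{k_1}\psi_{k_2}$ has size $\lesssim 2^{-k_1}$ and obeys symbol bounds with the appropriate powers of $2^{-k},2^{-k_1},2^{-k_2}$ on derivatives, so by Lemma \ref{kernelest} its kernel has $L^1_{x,y}$ norm $\lesssim 2^{-k_1}$ up to harmless $2^{\delta}$-type factors; then I apply the $Z$-norm bilinear estimate exactly as in the proof of Lemma \ref{fixedtimeest} (space-localize both inputs into $h_{1;j_1,k_1},h_{2;j_2,k_2}$, use $L^2$–$L^\infty$ Hölder together with the decay estimate \eqref{decayestimatefinal} and the $Z$-norm/Sobolev bounds from \eqref{bootstrap}), picking up the extra gain $2^{-k_1}$ from the symbol. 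Because $k\le k_1-100-\cdots$ and we are in a High$\times$High$\to$Low configuration, the output annulus has volume $\lesssim 2^{3k}$, which combines with the $2^{-k_1}$ gain to beat the loss $2^{(1+\alpha)j}$ in the $Z$-norm. For the bulk (cubic) term, I would use the fixed-time bilinear $Z$-norm estimate \eqref{july13eqn58} from Lemma \ref{fixedtimeest} for the inner quadratic factor together with a crude $L^\infty$ bound (via \eqref{decayestimatefinal}) on the remaining profile and the $2^{-k_1}$ gain from $\Phi^{-1}$; summing over $t\in[2^{m-1},2^m]$ (i.e. multiplying by $2^m$) and over dyadic parameters $k,k_1,k_2,j_1,j_2$ should land below $2^{-\alpha m/4}\epsilon_1^2$ with room to spare.

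The main obstacle I anticipate is controlling the small-frequency and high-frequency tails uniformly in $k$, and in particular making sure the symbol $\Phi^{-1}q$ genuinely satisfies good differentiated bounds near $k\to-\infty$ (where $\Lambda_a(\xi)$ and its derivatives degenerate) and near $k_1\to+\infty$ (where $N_0$ regularity must be spent). The low-frequency output issue is handled by replacing $L^2$–$L^\infty$ Hölder with the volume bound $2^{3k/2}$ on the $L^2$ side when $k\le -m$, exactly as in \eqref{july13eqn31}; the high-frequency issue is handled by the Cauchy–Schwarz argument of \eqref{july23eqn93}, trading $2^{-N_0\max\{k_1,k_2\}}$ against everything else. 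A secondary technical point is that $\Phi^{-1}$ is only a bounded-order symbol after the separation is used, so one must verify $|\nabla_\xi^\alpha\nabla_\eta^\beta(\Phi^{-1})|\lesssim 2^{-k_1}2^{-|\alpha|k}2^{-|\beta|k_1}$ on the relevant support; this is a routine but necessary computation using that $\Phi$ is comparable to $2^{k_1}$ and its derivatives are comparable to $1$ there. Once these symbol bounds are in hand, everything reduces to the bilinear/kernel machinery already set up in Section \ref{pre} and the first part of Section \ref{Z}.
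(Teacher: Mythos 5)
Your starting point is the same as the paper's in its main sub-case: since $\mu\nu=+$ and $|\xi|\ll|\eta|$, indeed $|\Phi^a_{\mu,\nu}(\xi,\eta)|\sim 2^{k_1}$, and a normal form in time plus kernel/bilinear estimates is exactly what the paper does. But as a complete argument your plan has two genuine gaps. First, you apply the normal form uniformly in $k_1$ and repeatedly call the factor $2^{-k_1}$ coming from $\Phi^{-1}$ a ``gain''. In the High$\times$High$\to$Low configuration $k_1$ ranges over all of $\Z$, and for $k_1<0$ the factor $2^{-k_1}$ is a loss that grows without bound as $k_1\to-\infty$; your boundary-term bound $\lesssim 2^{-k_1}\cdot(\text{fixed-time estimate})$ then diverges. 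This is precisely why the paper splits at $k_1=-\alpha m$: for $k_1\leq-\alpha m$ it does \emph{not} integrate by parts in time but estimates directly, using the smallness of the decay factor $2^{-m+k_1}$ (see \eqref{july3eqn21}), and it performs the normal form only for $k_1\geq-\alpha m$, where the kernel of $q/\Phi$ costs only $2^{8\alpha m}$ (see \eqref{july3eqn3}).

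Second, and more fundamentally, you never explain how the weight $2^{(1+\alpha)j}$ in the $Z$-norm is beaten when the output is localized at $j\sim m$ while \emph{both} input profiles are spatially concentrated, $\max\{j_1,j_2\}\leq(1-\delta)m$. In that regime the profiles' weighted norms give no smallness, the boundary term carries no time integration, and the only decay available from $L^2\times L^\infty$ H\"older is a single factor $2^{-m+k_2}$; together with $2^{(1+\alpha)j}\sim 2^{(1+\alpha)m}$ this leaves a net loss of order $2^{\alpha m}$, far above the target $2^{-\alpha m/4}$. Your suggested fix --- that the output-annulus volume $2^{3k}$ combined with $2^{-k_1}$ beats $2^{(1+\alpha)j}$ --- fails already for $k\sim 0$, $k_1\sim k+100$, $j\sim m$. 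The missing ingredient is the absence of space resonance: the paper shows $|\nabla_\eta\Phi^a_{\mu,\nu}|\gtrsim 1$ in this configuration (see \eqref{june28eqn31}) and integrates by parts in $\eta$ $\delta^{-2}$ times to dispose of the case $\max\{j_1,j_2\}\leq(1-\delta)m$ (estimate \eqref{july3eqn33}); only in the complementary case $\max\{j_1,j_2\}\geq(1-\delta)m$ does the factor $2^{-(1+\alpha)\max\{j_1,j_2\}}$ compensate the output weight, and only there is the normal form used. Without this dichotomy (or some substitute exploiting the $\eta$-oscillation), your estimates for both the boundary and the bulk terms cannot close. The large-$j$ case $j\geq\max\{m,-k_-\}+\delta(m+j)$, which you leave implicit, is fine if you genuinely rerun the integration-by-parts-in-$\xi$ argument of Lemma \ref{fixedtimeest} (estimates \eqref{july13eqn72} and \eqref{july13eqn52}), as the paper does.
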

\begin{proof}
Recall \eqref{systemeqnprof}. Same as what we did in \eqref{july13eqn70}, we also do space localization  for two inputs. As in the proof of Lemma \ref{fixedtimeest}, we split into two cases   based on the possible size of $x$, i.e., the size of $j$. 

 Since the obtained estimates \eqref{july13eqn72} and \eqref{july13eqn52} are still valid, it would be sufficient to consider the case $j\leq   \max\{m, -k_{-}\}+ \delta (m+j) .$

Note that, for this case, we have $j\leq (1-\delta)^{-1}\big(\max\{m, -k_{-}\}+ \delta m \big) $. Moreover, 
 from the fact that $|\xi|\ll |\eta|$,  we have $sgn(\eta_1-\xi_1)=sgn(\eta_1) $ if $|\slashed \xi-\slashed \eta|\ll |\xi-\eta|$. As a result, from the estimate \eqref{july23eqn1}, we have 
\be\label{june28eqn31}
\big|\nabla_{\eta } \big(  \Phi_{\mu, \nu}^a(\xi, \eta) \big)\big| \gtrsim  \frac{|\slashed \eta|^2}{|\eta|^2} + | \frac{( \eta_1-\xi_1,  \eta_2-\xi_2, \eta_3-\xi_3)}{|\xi-\eta|} +\frac{(\eta_1, c_1^2\eta_2, c_2^2\eta_3)}{ \sqrt{\eta_1^2 + c_1^2 \eta_2^2 +  c_2^2 \eta_3^2} } |  \gtrsim 1. 
\ee

Based on the possible size of $\max\{j_1, j_2\}$, we split into two sub-cases as follows.

$\oplus$\qquad If $\max\{j_1, j_2\}\leq (1-\delta) m $.

Note that, from the estimate \eqref{june28eqn31}, by doing integration by parts in $\eta$ once, we gain $2^{-m}$ and lose at most $\max\{2^{-k_1}, 2^{\max\{j_1,j_2\}}\}=2^{\max\{j_1,j_2\}}.$ Therefore,  by doing integration by parts in $\eta$ once, we gain at least $2^{-\delta m }$. After doing integration by parts in ``$\eta$'' $\delta^{-2}$ times, the following estimate holds from using either  the $L^2-L^\infty$ type bilinear estimate and the $L^\infty\rightarrow L^2$-type  Sobolev embedding or using  the $L^2\rightarrow L^1$ type Sobolev embedding first and then using the $L^2-L^2$ type bilinear estimate, 
\[
\sum_{(k_1,k_2)\in \chi_k^1} \sum_{ \begin{subarray}{c}
j_i\in [-k_{i,-}, \infty)\cap \Z, i=1,2\\
\max\{j_1,j_2\}\leq (1-\delta)m \\
\end{subarray} } 2^{(1+\alpha)j}\|P_k\big[ \varphi_{j,k}(\cdot)\mathcal{F}^{-1}\big[   \mathcal{J}_{k;k_1,k_2}^{a;\mu,\nu;j_1, j_2}(t, \xi) \big]\big](x)\|_{L^2}
\]
\[
\lesssim \sum_{(k_1,k_2)\in \chi_k^1}  \sum_{ \begin{subarray}{c}
j_i\in [-k_{i,-}, \infty)\cap \Z, i=1,2\\
\max\{j_1,j_2\}\leq (1-\delta)m \\
\end{subarray} }   2^{-100m } 2^{(1+\alpha)j -(1+\alpha)j_2+\delta m } 2^{3\min\{k,k_2\}/2}   
\]
\be\label{july3eqn33}
\times \min\{2^{-N_0 k_{2,+}+\delta m }, 2^{-(1+\alpha)j_1+\delta m } \} \epsilon_1^2 \lesssim  2^{-2m}\epsilon_1^2. 
\ee

$\oplus$\qquad If $\max\{j_1, j_2\}\geq (1-\delta) m $ and $k_1\leq -\alpha m .$

Due to symmetry of inputs in the High $\times$ High type interaction case, without loss of generality, we assume that $j_1=\max\{j_1,j_2\}$. After putting $h_{1;j_1,k_1}$ in $L^2$ and putting $e^{-i t\Lambda_2  } h_{2;j_2,k_2}$ in $L^\infty$,  from \eqref{decayestimatefinal}, we have 
\be\label{july3eqn21}
\begin{split}
&\sum_{\begin{subarray}{c}
(k_1,k_2)\in \chi_k^1\\ 
k_1\leq -\alpha m\\
\end{subarray}} \sum_{ \begin{subarray}{c}
j_i\in [-k_{i,-}, \infty)\cap \Z, i=1,2\\
\max\{j_1,j_2\}\geq (1-\delta)m \\
\end{subarray} } 2^{(1+\alpha)j}\|P_k\big[ \varphi_{j,k}(\cdot)\mathcal{F}^{-1}\big[   \mathcal{J}_{k;k_1,k_2}^{a;\mu,\nu;j_1, j_2}(t, \xi) \big]\big](x)\|_{L^2}\\ 
&\lesssim \sum_{\begin{subarray}{c}
(k_1,k_2)\in \chi_k^1\\ 
k_1\leq -\alpha m\\
\end{subarray}}  \sum_{ \begin{subarray}{c}
j_i\in [-k_{i,-}, \infty)\cap \Z, i=1,2\\
\max\{j_1,j_2\}\geq (1-\delta)m \\
\end{subarray} }  2^{(1+\alpha)j-8k_{1,+} -(1+\alpha)\max\{j_1,j_2\}+2\delta m }2^{ -m+k_1} \lesssim 2^{-(1+\alpha/4)m}\epsilon_1^2\\
\end{split}
\ee
 
 $\oplus$\qquad If $\max\{j_1, j_2\}\geq (1-\delta) m $ and $k_1\geq -\alpha m .$

Note that, for this case, we have $j\leq (1+10\delta)m.$   Due to symmetry between two inputs in the High $\times$ High type interaction, without loss of generality, we assume that $j_1=\max\{j_1,j_2\}.$ As $\mu\nu=+, |\xi|\ll |\eta|$, we have 
\be\label{july3eqn4}
| \Phi_{\mu, \nu}^a(\xi, \eta)|\sim |\eta| \gtrsim 2^{k_1}\gtrsim 2^{-\alpha m}. 
\ee

Recall \eqref{july13eqn70}. To take advantage of high oscillation in time, we do integration by parts in time once. As a result, we have
\be\label{july3eqn1}
\begin{split}
&  \int_{t_1}^{t_2}\mathcal{F}^{-1}\big[   \mathcal{J}_{k;k_1,k_2}^{a;\mu,\nu;j_1, j_2}(t, \xi) \big]  d t   =     \sum_{b=1,2}  End_{k;k_1,k_2;b }^{a;\mu,\nu;j_1, j_2}(t_a,  \xi ) +   \int_{t_1}^{t_2}\mathcal{F}^{-1}\big[   \widetilde{\mathcal{J}}_{k;k_1,k_2;b}^{a;\mu,\nu;j_1, j_2}(t, \xi) \big]  d t  ,\\
& End_{k;k_1,k_2 ;b  }^{a;\mu,\nu;j_1, j_2}(t_b, \xi) :=  \int_{\R^3}  \int_{\R^3} e^{ i x\cdot \xi +  i t_b \Phi_{\mu, \nu}^a ( \xi ,\eta)} \frac{(-1)^{b-1}i q^a_{\mu, \nu} (\xi-\eta, \eta)}{  \Phi_{\mu, \nu}^a ( \xi ,\eta)}\widehat{h_{1;j_1,k_1}^{\mu}}(t_b, \xi-\eta) \widehat{h_{2;j_2,k_2}^{\nu}}(t_b, \eta) \psi_k(\xi)   d\eta d \xi  ,\\
&  \widetilde{\mathcal{J}}_{k;k_1,k_2;1 }^{a;\mu,\nu;j_1, j_2}(t, \xi):= \int_{\R^3}  \int_{\R^3} e^{ i x\cdot \xi +  i t  \Phi_{\mu, \nu}^a ( \xi ,\eta)} \frac{i q^a_{\mu, \nu} (\xi-\eta, \eta)}{\Phi_{\mu, \nu}^a ( \xi ,\eta)} \widehat{h_{1;j_1,k_1}^{\mu}}(t , \xi-\eta) \p_t \widehat{h_{2;j_2,k_2}^{\nu}}(t , \eta) \psi_k(\xi)   d\eta d \xi,\\
&  \widetilde{\mathcal{J}}_{k;k_1,k_2;2}^{a;\mu,\nu;j_1, j_2}(t, \xi):= \int_{\R^3}  \int_{\R^3} e^{ i x\cdot \xi +  i t  \Phi_{\mu, \nu}^a ( \xi ,\eta)} \frac{ i q^a_{\mu, \nu} (\xi-\eta, \eta)}{\Phi_{\mu, \nu}^a ( \xi ,\eta)}\p_t \widehat{h_{1;j_1,k_1}^{\mu}}(t , \xi-\eta)  \widehat{h_{2;j_2,k_2}^{\nu}}(t , \eta) \psi_k(\xi)   d\eta d \xi.\\ 
\end{split}
\ee
From the estimate of phases in \eqref{july3eqn4}, $\forall k_1 \in[-\alpha m , \infty)\cap \Z, (k_1,k_2)\in \chi_k^1,$ the following estimate holds for the kernel, 
\be\label{july3eqn3}
\big\|\int_{\R^3} \int_{\R^3} e^{i x\cdot\xi+ i y\cdot\eta} \frac{i q^a_{\mu, \nu} (\xi-\eta, \eta)}{\Phi_{\mu, \nu}^a ( \xi ,\eta)} \psi_k(\xi)  \psi_{k_1}(\xi-\eta)  \psi_{k_2}( \eta) d\eta d \xi \big\|_{L^1_{x,y}} \lesssim 2^{8\alpha m }.
\ee
From the $L^2-L^\infty$ type bilinear estimate and the above estimate of kernel, we have
\be\label{july3eqn2}
\begin{split}
 &\sum_{\begin{subarray}{c}
(k_1,k_2)\in \chi_k^1\\ 
k_1\geq -\alpha m\\
\end{subarray}} \sum_{ \begin{subarray}{c}
j_i\in [-k_{i,-}, \infty)\cap \Z, i=1,2\\
\max\{j_1,j_2\}\geq (1-\delta)m \\
\end{subarray} } 2^{(1+\alpha)j}  \|P_k\big[ \varphi_{j,k}(\cdot)\mathcal{F}^{-1}\big[  End_{k;k_1,k_2;b }^{a;\mu,\nu;j_1, j_2}(t_a, \xi) \big]\big](x)\|_{L^2}\\ 
 &\lesssim  \sum_{\begin{subarray}{c}
(k_1,k_2)\in \chi_k^1\\ 
k_1\geq -\alpha m\\
\end{subarray}}  \sum_{ \begin{subarray}{c}
j_i\in [-k_{i,-}, \infty)\cap \Z, i=1,2\\
\max\{j_1,j_2\}\geq (1-\delta)m \\
\end{subarray} } 2^{(1+10\alpha) m } \|h_{1;j_1,k_1}(t_a)\|_{L^2} \|e^{-i t_a\Lambda_2  }h_{2;j_2,k_2}(t_a)\|_{L^\infty} \\ 
& \lesssim  2^{-m/2}\epsilon_1^2.
 \end{split}
\ee

Similarly, from the $L^2-L^\infty$ type bilinear estimate,  the   estimate of kernel in \eqref{july3eqn3} and  the estimate \eqref{july13eqn58} in Lemma \ref{fixedtimeest},  we have  
\be\label{july3eqn7}
\begin{split}
 & \sum_{\begin{subarray}{c}
(k_1,k_2)\in \chi_k^1\\ 
k_1\geq -\alpha m\\
\end{subarray}}  \sum_{ \begin{subarray}{c}
j_i\in [-k_{i,-}, \infty)\cap \Z, i=1,2\\
\max\{j_1,j_2\}\geq (1-\delta)m \\
\end{subarray} } \sum_{b=1,2} 2^{(1+\alpha)j}  \|  \int_{t_1}^{t_2} P_k\big[ \varphi_{j,k}(\cdot)\mathcal{F}^{-1}\big[   \widetilde{\mathcal{J}}_{k;k_1,k_2;b }^{a;\mu,\nu;j_1, j_2}(t, \xi)  \big]\big](x) d t\|_{L^2}\\ 
 &\lesssim  \sum_{\begin{subarray}{c}
(k_1,k_2)\in \chi_k^1\\ 
k_1\geq -\alpha m\\
\end{subarray}} \sum_{ \begin{subarray}{c}
j_i\in [-k_{i,-}, \infty)\cap \Z, i=1,2\\
\max\{j_1,j_2\}\geq (1-\delta)m \\
\end{subarray} } \sup_{t\in [2^{m-1},2^m]} 2^{(2+10\alpha) m } \big(\|h_{1;j_1,k_1}(t )\|_{L^2} \|e^{-i t \Lambda_2  }\p_t h_{2;j_2,k_2}(t )\|_{L^\infty} \\ 
&  + \|\p_t h_{1;j_1,k_1}(t )\|_{L^2} \|e^{-i t \Lambda_2  } h_{2;j_2,k_2}(t )\|_{L^\infty} \big) \lesssim  2^{-m/2}\epsilon_1^2.
 \end{split}
\ee

Hence the desired estimate \eqref{july3eqn11} holds after combining the above estimate and the obtained estimates \eqref{july3eqn2},  \eqref{july3eqn33}, and  \eqref{july3eqn21}.
 
\end{proof}

Now, we consider the resonance  case $\mu\nu=- $. 
\begin{lemma}\label{highhighWfixed2}
Under the bootstrap assumption \eqref{bootstrap}, for any   $t \in [2^{m-1}, 2^m]\subset[0, T], m\in \Z_+$, $\mu, \nu\in\{+, -\}$, s.t., $\mu\nu=-,$ we have
\be\label{july3eqn41}
\sup_{k\in \Z}\sum_{ a=1,2}  \sum_{\begin{subarray}{c}
(k_1,k_2)\in \chi_k^1\\ 
 k_1\geq k+100 +5 \max\{\tilde{c}, -c\}
\end{subarray}} \|\mathcal{F}^{-1}\big[ \int_{t_1}^{t_2}   \mathcal{J}_{k;k_1,k_2}^{a;\mu,\nu}(t, \xi)  d t \big]   \|_{Z} \lesssim 2^{\delta m }\epsilon_1^2,
\ee
where the absolute constants $c$ and $\tilde{c}$ are defined in \eqref{2023jan28eqn1}. 
\end{lemma}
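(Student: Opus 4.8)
The plan is to run the same architecture as in the proof of Lemma~\ref{highhighWfixed}, augmented to handle the possibility of time resonance, which is now genuinely present since $\mu\nu=-$. After the spatial localizations of the two inputs as in \eqref{july13eqn70} and the splitting according to the size of the output localization $j$, the estimates \eqref{july13eqn72} and \eqref{july13eqn52}, established in the range $j\geq\max\{m,-k_{-}\}+\delta(m+j)$, make no use of the sign of $\mu\nu$ and are reused verbatim; it remains to treat $j\leq\max\{m,-k_{-}\}+\delta(m+j)$, i.e.\ $j\lesssim m$, and the two cases $(\mu,\nu)=(+,-)$ and $(\mu,\nu)=(-,+)$ are treated in the same way.

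In this range I would decompose $\mathcal{J}_{k;k_1,k_2}^{a;\mu,\nu;j_1,j_2}$ first according to the dyadic size $2^{l}$ of $l_{\mu,\nu}(\xi,\eta)$, as in \eqref{july21eqn71}, and then according to the dyadic size $2^{\theta}$ of $|\Phi_{\mu,\nu}^a(\xi,\eta)|$, with the time-resonance threshold $\theta=-m+\delta m$. Two structural facts drive the argument. First, by \eqref{july13eqn11} the null structure forces $|q^a_{\mu,\nu}|\lesssim 2^{l}$ on the piece $l_{\mu,\nu}\sim2^{l}$, while by \eqref{july23eqn1} one has $|\nabla_{\eta}\Phi_{\mu,\nu}^a|\sim 2^{l}$ there. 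Second, because $|\xi|\ll|\eta|\sim|\xi-\eta|$, a Taylor expansion of $\Phi_{\mu,\nu}^a$ about the common-speed direction $(1,0,0)$ (in the spirit of \eqref{2023jan29eqn61}) shows $|\Phi_{\mu,\nu}^a|\lesssim 2^{k}+2^{2l+k_2}$, with the time-resonant slab being reached only when the two $x_1$-frequencies have the appropriate signs and $2^{k}$, $2^{2l+k_2}$ are both small or mutually balanced; in particular the space--time resonant set lies inside $\{\slashed\xi=\slashed\eta=0\}$.

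For $\theta\geq -m+\delta m$ (away from time resonance) I integrate by parts in time once, exactly as in \eqref{july3eqn1}, producing the boundary terms and the two $\partial_t h$ terms. The normal-form multiplier $q^a_{\mu,\nu}/\Phi_{\mu,\nu}^a$, restricted to $l_{\mu,\nu}\sim2^{l}$ and $|\Phi_{\mu,\nu}^a|\sim2^{\theta}$, produces via Lemma~\ref{kernelest} a kernel with controlled $L^1$ norm; the loss from $1/\Phi_{\mu,\nu}^a$ is compensated by the $2^{-m}$ pointwise decay of one input from \eqref{decayestimatefinal}, together with the $Z$-norm smallness of the other profile in $L^2$ for the boundary terms and with the gain supplied by Lemma~\ref{fixedtimeest} for the $\partial_t h$ terms; one also disposes of $\max\{j_1,j_2\}\leq(1-\delta)(m+l)$ by integration by parts in $\eta$ beforehand, using $|\nabla_{\eta}\Phi_{\mu,\nu}^a|\sim 2^{l}$. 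Summing over $l$, $\theta$, $j_1$, $j_2$ as in \eqref{july3eqn2}--\eqref{july3eqn7} yields a contribution $\lesssim 2^{\delta m}\epsilon_1^2$. For $\theta\leq -m+\delta m$ no gain from time oscillation is available; one then uses that the null factor $|q^a_{\mu,\nu}|\lesssim 2^{l}$ is small on the time-resonant slab, that after integration by parts in $\eta$ one may assume $\max\{j_1,j_2\}\gtrsim m+l$, and that the $\eta$-support of the integrand is a thin tube whose volume obeys a bound of the type of \eqref{2023jan30eqn1}. An $L^{2}$--$L^{\infty}$ bilinear estimate using this volume for the input placed in $L^2$, combined with $\int_{t_1}^{t_2}dt\lesssim 2^{m}$, again gives $\lesssim 2^{\delta m}\epsilon_1^2$.

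The main obstacle is the genuinely space--time resonant piece, where $\theta$ and $l$ both sit at their thresholds and $2^{k}\sim 2^{2l+k_2}$: there one can integrate by parts neither in $t$ (phase too small) nor in $\eta$ (gradient too small), so the only leverage is the joint smallness of the null factor $2^{l}$ and of the resonant volume. The book-keeping is delicate because one is propagating the critical weight $2^{(1+\alpha)j}$ with $j$ as large as $(1+10\delta)m$, so every power of $2^{l}$, $2^{\theta}$ and $2^{\max\{k_1,k_2\}}$ must be balanced against it; I expect that this piece requires the more refined localization of subsection~\ref{frecompcase}, adapted to the High~$\times$~High geometry, with the simplification that the dangerous sign cancellation in the leading term of the phase does not occur here since $|\xi|\ll|\eta|$.
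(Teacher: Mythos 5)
Your skeleton (reuse the large-$j$ estimates from Lemma \ref{highhighWfixed}, decompose in the size $2^{l}$ of $l_{\mu,\nu}$, integrate by parts in $\eta$ to force $\max\{j_1,j_2\}\gtrsim m+l$, and then split resonant/non-resonant) matches the paper's opening moves, but the core of your argument has a genuine gap. You propose a dyadic decomposition in the size $2^{\theta}$ of $|\Phi^a_{\mu,\nu}|$ with threshold $2^{-m+\delta m}$ and claim that, on the piece $|\Phi|\sim 2^{\theta}$, the normal-form multiplier $q^a_{\mu,\nu}/\Phi^a_{\mu,\nu}$ has an $L^1$-controlled kernel via Lemma \ref{kernelest}. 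This is not justified: the cutoff $\psi(2^{-\theta}\Phi)$ has $\eta$-derivatives of size $2^{-\theta}|\nabla_\eta\Phi|\sim 2^{l-\theta}$, which for $\theta$ near $-m$ is enormous, so Lemma \ref{kernelest} does not give anything usable. Even ignoring the kernel issue, the bookkeeping does not close: with only the null gain $2^{l}$ against the loss $2^{-\theta}\lesssim 2^{m-\delta m}$, the weight mismatch $2^{(1+\alpha)(j-\max\{j_1,j_2\})}\lesssim 2^{-(1+\alpha)l}$ leaves a factor $2^{-\alpha l}$, which for $l$ near $-m/2$ is $2^{\alpha m/2}\gg 2^{\delta m}$. (Also, your Taylor bound should read $|\Phi|\lesssim 2^{k}+2^{l+k_2}$, not $2^{2l+k_2}$, since $l_{\mu,\nu}\gtrsim|\slashed\eta|^2/|\eta|^2$ only gives $|\slashed\eta|/|\eta|\lesssim 2^{l/2}$.)

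What the paper actually uses, and what your proposal is missing, is the smooth geometric partition \eqref{july20eqn4} into $\varphi_1,\varphi_2,\varphi_3$ (in the case $k\geq -2m/3-\alpha m$, $k_2\geq -m/4-2\alpha m$; the complementary range is handled by volume and $L^2$--$L^\infty$ estimates), together with two key mechanisms. First, in the antipodal region $\varphi_2$, the identity \eqref{july21eqn11} gives the lower bound $|\Phi^a_{\mu,\nu}|\gtrsim|\slashed\eta|^2|\eta|^{-1}=2^{2b+k_2}$, where $2^{b}\sim|\slashed\eta|/|\eta|$, while the null symbol obeys $|q^a_{\mu,\nu}|\lesssim 2^{2b}$ by \eqref{L1kernellocs}; the two match exactly, so after integration by parts in time the multiplier is bounded by $2^{-k_2}$, which is affordable since $k_2\geq-m/4-2\alpha m$. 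Second, in the region $\varphi_3$ no phase lower bound is available at all; there the paper uses $|\nabla_\eta\Phi|\gtrsim 2^{b}$ and, in the delicate range $j_1=\max\{j_1,j_2\}\leq j+2b/(1+2\alpha)$ (where neither time oscillation nor a large $\max\{j_1,j_2\}$ helps), controls $2^{(1+\alpha)j}\|Q_{j,k}(\cdot)\|_{L^2}$ by differentiating the bilinear expression $|\gamma|=10$ times in $\xi$, integrating by parts in $\eta$, and invoking Lemma \ref{kernelest} for smooth cutoffs in $|\slashed\eta|/|\eta|$ (see \eqref{july21eqn27}--\eqref{july21eqn32}). Neither mechanism appears in your proposal; instead you defer the genuinely space--time resonant piece to an adaptation of subsection \ref{frecompcase}, which the paper does not need here (the frequency-comparable machinery is reserved for the case where the sign cancellation in \eqref{2023jan29eqn61} can occur, which, as you yourself note, is excluded when $|\xi|\ll|\eta|$). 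As written, the hardest part of the lemma is therefore left unproved, and the route you sketch for the near-resonant slab (null factor plus a volume bound ``of the type of \eqref{2023jan30eqn1}'') would additionally require a codimension-one gain in the $\eta$-support from the constraint $|\Phi|\lesssim 2^{\theta}$ that you never establish.
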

\begin{proof}
 Recall \eqref{systemeqnprof}. Same as what we did in \eqref{july13eqn70}, we also do space localization  for two inputs. As in the proof of Lemma \ref{fixedtimeest},  we split into two cases   based on the possible size of $x$, i.e., the size of $j$. Moreover, because the sign of $\mu  \nu$ doesn't play much role for the case $j\geq  \max\{m, -k_{-}\}+ \delta (m+j)  $ in the proof of Lemma \ref{highhighWfixed}, it would be sufficient for us to only consider the case $j\leq   \max\{m, -k_{-}\}+ \delta (m+j)  $.

 As in the decomposition in  \eqref{july21eqn71},  based on the size of $l_{\mu, \nu}(\xi, \eta),$ we do dyadic decomposition for   $\mathcal{J}_{k;k_1,k_2}^{a;\mu,\nu;j_1, j_2}(t, \xi) $.  We first consider the main  case $l$ is relatively small.  The case  $l\in [-100 -  5\max\{\tilde{c}, -c\}, \tilde{c}+10]\cap \Z$ (see \eqref{2023jan28eqn1} for the definitions of $c$ and $\tilde{c}$) can be handled in the same way as one sub-case of the main case. We will elaborate this point later. 

 Let $l\in [-m/2+\delta m, -100  - 5\max\{\tilde{c}, -c\}]\cap \Z$ be fixed. Recall the detailed formula of $ q_{\mu, \nu} (\xi-\eta, \eta)$ in \eqref{june21eqn51}.  From the estimate \eqref{addsymbolest}, the following estimate for  the localized symbol,
\be\label{L1kernelloc}
\big|  q_{\mu, \nu} (\xi-\eta, \eta)   \varphi_{l;-m/2+\delta m}( l_{\mu,\nu}(\xi, \eta)) \psi_k(\xi)\psi_{k_1}(\xi-\eta)\psi_{k_2}(\eta) \big|  \lesssim 2^{ l}.
\ee

 $\bullet$ \qquad If $k \leq -2m/3 -\alpha m $ or $k_2  \leq -m/4  -2\alpha m. $  

We first consider the threshold case i.e.,  $l=-m/2+\delta m$. Recall the definition of $ l_{\mu,\nu}(\xi, \eta)$ in \eqref{july23eqn1}. From the estimate of the symbol in \eqref{L1kernelloc}, the estimate of the volume of support of $\eta$ in  \eqref{2023jan30eqn71},    the $L^2-L^\infty$-type estimate, the decay estimate \eqref{decayestimatefinal},  we have 
\be\label{july23eqn18}
\begin{split}
\sum_{ \begin{subarray}{c} 
(k_1,k_2)\in \chi_k^1\\
  k_1\geq k+100 +5 \max\{\tilde{c}, -c\}
\end{subarray}} &2^{(1+\alpha)j} \big\| \sum_{ \begin{subarray}{c}
j_i\in [-k_{i,-}, \infty)\cap \Z, i=1,2\\
\end{subarray} }  P_k\big[ \varphi_{j,k}(\cdot)\mathcal{F}^{-1}\big[   \mathcal{J}_{k;k_1,k_2;l}^{a;\mu,\nu;j_1, j_2}(t, \xi) \big]\big](x)\big\|_{L^2}\\
& \lesssim   \sum_{ \begin{subarray}{c} 
(k_1,k_2)\in \chi_k^1\\
  k_1\geq k+100 +5 \max\{\tilde{c}, -c\}
\end{subarray}}    2^{l + (1+\alpha)j} \min\{ 2^{-m+  k_1}2^{l/2},2^{3k/2 + 2l+3k_1} \} \epsilon_1^2\lesssim 2^{-  m}\epsilon_1^2.
\end{split}
\ee

Now we focus on the non-threshold case, $l\in (-m/2+\delta m, -100 - 5\max\{\tilde{c}, -c\}]\cap \Z.$ From the estimate \eqref{july23eqn1},  by doing integration by parts in $\eta$ once, we gain at least $2^{-\delta m }$. After doing integration by parts in $\eta$ $\delta^{-2}$ times, we can rule out the case   $\max\{j_1,j_2\}\leq  m+ l-\delta m $ as follows, 
\be\label{july20eqn16}
\begin{split}
& \sum_{ \begin{subarray}{c} 
(k_1,k_2)\in \chi_k^1\\
  k_1\geq k+100 +5 \max\{\tilde{c}, -c\}
\end{subarray}}   \sum_{ \begin{subarray}{c}
j_i\in [-k_{i,-}, \infty)\cap \Z, i=1,2\\
\max\{j_1,j_2\}\leq   m+ l-\delta m \\
\end{subarray} } 2^{(1+\alpha)j}\|P_k\big[ \varphi_{j,k}(\cdot)\mathcal{F}^{-1}\big[   \mathcal{J}_{k;k_1,k_2;l}^{a;\mu,\nu;j_1, j_2}(t, \xi)  \big]\big](x)\|_{L^2}\\
& \lesssim  \sum_{ \begin{subarray}{c} 
(k_1,k_2)\in \chi_k^1\\
  k_1\geq k+100 +5 \max\{\tilde{c}, -c\}
\end{subarray}}    \sum_{ \begin{subarray}{c}
j_i\in [-k_{i,-}, \infty)\cap \Z, i=1,2\\
\max\{j_1,j_2\}\leq  m+2l-\delta m \\
\end{subarray} }   2^{-100m } 2^{(1+\alpha)j -(1+\alpha)j_2+\delta m } 2^{3\min\{k,k_2\}/2} \\
& \quad \times \min\{2^{-N_0 k_{2,+}+\delta m }, 2^{-(1+\alpha)j_1+\delta m } \} \epsilon_1^2 \lesssim  2^{-2m}\epsilon_1^2.
\end{split} 
\ee

It remains to consider the case $\max\{j_1,j_2\}\geq  m+ l-\delta m. $ Due to symmetry between two inputs in the High $\times$ High type interaction, without loss of generality, we assume that $j_1=\max\{j_1,j_2\}.$ From the   estimate \eqref{L1kernelloc}, the $L^2-L^\infty$ type bilinear estimate and the decay estimate  \eqref{decayestimatefinal}, the $L^2\rightarrow L^1$ type Sobolev embedding and the $L^2-L^2$ type bilinear estimate,  we have 
\[
\sum_{ \begin{subarray}{c} 
(k_1,k_2)\in \chi_k^1\\
  k_1\geq k+100 +5 \max\{\tilde{c}, -c\}
\end{subarray}}  \sum_{ \begin{subarray}{c}
j_i\in [-k_{i,-}, \infty)\cap \Z, i=1,2\\
j_1=\max\{j_1,j_2\}\geq   m+ l-\delta m \\
\end{subarray} } 2^{(1+\alpha)j}\|P_k\big[ \varphi_{j,k}(\cdot)\mathcal{F}^{-1}\big[   \mathcal{J}_{k;k_1,k_2;l}^{a;\mu,\nu;j_1, j_2}(t, \xi) \big]\big](x)\|_{L^2}
\]
\be\label{july20eqn18}
\lesssim   \sum_{ \begin{subarray}{c} 
(k_1,k_2)\in \chi_k^1\\
  k_1\geq k+100 +5 \max\{\tilde{c}, -c\}
\end{subarray}}    \sum_{ \begin{subarray}{c}
j_i\in [-k_{i,-}, \infty)\cap \Z, i=1,2\\
j_1=\max\{j_1,j_2\}\geq   m+ l-\delta m \\
\end{subarray} }    2^{  l} 2^{(1+\alpha)j -(1+\alpha)j_1+\delta m }\min\{2^{3k/2}, 2^{-m+ k_2-5k_{2,+}}\}\epsilon_1^2 \lesssim  2^{- m}\epsilon_1^2. 
\ee
 
 $\bullet$ \qquad If $k \geq -2m/3 -\alpha m $ and $k_2  \geq  -m/4  -2\alpha m . $  
 
 We decompose $  \mathcal{J}_{k;k_1,k_2;l}^{a;\mu,\nu;j_1, j_2}(t, \xi)$ further by using   the following partition of unity function,
\be\label{july20eqn4}
\begin{split}
 &\mathcal{J}_{k;k_1,k_2;l}^{a;\mu,\nu;j_1, j_2}(t, \xi)=\sum_{i=1,2,3}  \mathcal{J}_{k;k_1,k_2;l;i}^{a;\mu,\nu;j_1, j_2}(t, \xi), \\
&\varphi_{1}(\xi, \eta)=\psi_{> -3}(1+\mu\xi_1\eta_1/|\xi||\eta|)    \psi_{[-3+c,3+\tilde{c}] }(|\slashed \xi|/|\slashed \eta| )  , \\ 
&\varphi_{2 }(\xi, \eta)= \psi_{\leq -3}(1+\mu\xi_1\eta_1/|\xi||\eta|)  \psi_{[-3+c,3+\tilde{c}] }(|\slashed \xi|/|\slashed \eta| ),  \quad 
\varphi_{3}(\xi, \eta)=   1-  \psi_{[-3+c,3+\tilde{c}] }(|\slashed \xi|/|\slashed \eta| )  ,\\ 
 &\mathcal{J}_{k;k_1,k_2;l;i}^{a;\mu,\nu;j_1, j_2}(t, \xi)=\int_{\R^3} e^{i t \Phi_{\mu, \nu}^a ( \xi ,\eta)} q^a_{\mu, \nu} (\xi-\eta, \eta) \widehat{h_{1;j_1,k_1}^{\mu}}(t, \xi-\eta) \widehat{h_{2;j_2,k_2}^{\nu}}(t, \eta)  \\ 
 &\quad \times  \psi_k(\xi)    \varphi_{l;-m/2+\delta m}( l_{\mu,\nu}(\xi, \eta))    \varphi_{i}(\xi, \eta)  d\eta
\end{split}
\ee
where $c, \tilde{c}$ are defined in \eqref{2023jan28eqn1}.

 $\oplus$\qquad The estimate of $\mathcal{J}_{k;k_1,k_2;l;1}^{a;\mu,\nu;j_1, j_2}(t, \xi).$

 Recall the definition of the cutoff function $\varphi_{1}(\xi, \eta)$ in  \eqref{july20eqn4}.    As $l\leq -100  - 2\max\{\tilde{c}, -c\}$, $k\leq k_1-100 - 5\max\{\tilde{c}, -c\}$, and $1+\mu\xi_1\eta_1/|\xi||\eta|\geq 2^{-4}$, For any $(\xi,\eta)\in supp\big(\varphi_{1}(\xi, \eta) \psi_k(\xi)\psi_{k_1}(\xi-\eta)\psi_{k_2}(\eta)  \varphi_{l;-m/2+\delta m}( l_{\mu,\nu}(\xi, \eta))\big), $  we have $ |\Phi_{\mu, \nu}^a ( \xi ,\eta)|\sim 2^k.$ Recall that $k \geq -2m/3- \alpha m $. To take advantage of high oscillation in time, we do normal form transformation. As a result, we have
\be\label{july21eqn1}
\begin{split}
 \int_{t_1}^{t_2}& \mathcal{J}_{k;k_1,k_2;l;1}^{a;\mu,\nu;j_1, j_2}(t, \xi) d t =\sum_{n=1,2} (-1)^n  \int_{\R^3} e^{i t_n \Phi_{\mu, \nu}^a ( \xi ,\eta)} \frac{q^a_{\mu, \nu} (\xi-\eta, \eta)}{i\Phi_{\mu, \nu}^a ( \xi ,\eta) } \widehat{h_{1;j_1,k_1}^{\mu}}(t_n , \xi-\eta) \widehat{h_{2;j_2,k_2}^{\nu}}(t_n , \eta)  \\ 
 & \quad \times \psi_k(\xi)    \varphi_{l;-m/2+\delta m}( l_{\mu,\nu}(\xi, \eta))   \varphi_{1}(\xi, \eta)  d\eta -  \int_{t_1}^{t_2} \int_{\R^3} e^{i t \Phi_{\mu, \nu}^i ( \xi ,\eta)} \frac{q^a_{\mu, \nu} (\xi-\eta, \eta)}{i\Phi_{\mu, \nu}^a ( \xi ,\eta) } \\ 
 & \quad \times \p_t \big[ \widehat{h_{1;j_1,k_1}^{\mu}}(t, \xi-\eta) \widehat{h_{2;j_2,k_2}^{\nu}}(t, \eta)\big]  \psi_k(\xi)     \varphi_{l;-m/2+\delta m}( l_{\mu,\nu}(\xi, \eta))   \varphi_{1}(\xi, \eta)  d\eta dt.
\end{split}
\ee

We first consider the threshold case i.e.,  $l=-m/2+\delta m$. From \eqref{july21eqn1}, the $L^2-L^\infty$-type estimate,  the estimate of symbol in \eqref{L1kernelloc},  the estimate of the volume of support of $\eta$ in  \eqref{2023jan30eqn71},    and the estimate \eqref{july23eqn91}  in Proposition \ref{Zest}, 
\be\label{july23eqn23}
\begin{split}
 \sum_{ \begin{subarray}{c} 
(k_1,k_2)\in \chi_k^1\\
  k_1\geq k+100 +5 \max\{\tilde{c}, -c\}
\end{subarray}}   2^{(1+\alpha)j} &\big\| \sum_{ \begin{subarray}{c}
j_i\in [-k_{i,-}, \infty)\cap \Z, i=1,2\\
\end{subarray} }  P_k\big[ \varphi_{j,k}(\cdot)\mathcal{F}^{-1}\big[    \int_{t_1}^{t_2} \mathcal{J}_{k;k_1,k_2;l;1}^{a;\mu,\nu;j_1, j_2}(t, \xi) d t  \big]\big](x)\big\|_{L^2}\\
& \lesssim   \sum_{ \begin{subarray}{c} 
(k_1,k_2)\in \chi_k^1\\
  k_1\geq k+100 +5 \max\{\tilde{c}, -c\}
\end{subarray}}     2^{-k+ l + (1+\alpha)j+2\alpha m }  2^{3k_1+2l-8k_{1,+} } \epsilon_1^2\lesssim \epsilon_1^2.
\end{split}
\ee

For the non-threshold case, $l\in (-m/2+\delta m, -100+c]\cap \Z,$ as in the obtained estimate \eqref{july20eqn16}, we can rule out the case $\max\{j_1,j_2\}\leq m + l -\delta m $ by doing integration by parts in ``$\eta$'' $\delta^{-2}$ times. Now, we focus on the case $\max\{j_1,j_2\}\geq m + l -\delta m $.   

Recall \eqref{systemeqnprof}. As $ |\Phi_{\mu, \nu}^a ( \xi ,\eta)|\sim 2^k$, from  the $L^2-L^\infty$ type bilinear estimate, the $L^\infty$ decay estimate \eqref{decayestimatefinal}, the estimate \eqref{linearwavedecay} in Lemma \ref{lineardecay}, and  the estimate \eqref{july13eqn58} in Lemma \ref{fixedtimeest}, we have
\be\label{july21eqn3}
\begin{split}
&  \sum_{ \begin{subarray}{c} 
(k_1,k_2)\in \chi_k^1\\
  k_1\geq k+100 +5 \max\{\tilde{c}, -c\}
\end{subarray}}   \sum_{ \begin{subarray}{c}
j_i\in [-k_{i,-}, \infty)\cap \Z, i=1,2\\
 \max\{j_1,j_2\}\geq   m+ l-\delta m \\
\end{subarray} } 2^{(1+\alpha)j}\|P_k\big[ \varphi_{j,k}(\cdot)\mathcal{F}^{-1}\big[    \int_{t_1}^{t_2} \mathcal{J}_{k;k_1,k_2;l;1}^{a;\mu,\nu;j_1, j_2}(t, \xi) d t \big]\big](x)\|_{L^2}\\
&  \lesssim   \sum_{ \begin{subarray}{c} 
(k_1,k_2)\in \chi_k^1\\
  k_1\geq k+100 +5 \max\{\tilde{c}, -c\}
\end{subarray}}    \sum_{ \begin{subarray}{c}
j_i\in [-k_{i,-}, \infty)\cap \Z, i=1,2\\
 \max\{j_1,j_2\}\geq   m+ l-\delta m \\
\end{subarray} }   2^{ m/4+ (\alpha+30\delta)m} 2^{-k+  l} 2^{(1+\alpha)j -(1+\alpha)  \max\{j_1,j_2\}  } \\ 
& \times  2^{-m+  k_2-4k_{2,+}} \epsilon_1^2 \lesssim  \epsilon_1^2. \\ 
\end{split}
\ee

  $\oplus$\qquad The estimate of $\mathcal{J}_{k;k_1,k_2;l;2}^{a;\mu,\nu;j_1, j_2}(t, \xi).$

 Recall the definition of the cutoff function $\varphi_{2}(\xi, \eta)$ in  \eqref{july20eqn4}.  As $l\leq -100  - 2\max\{\tilde{c}, -c\}$, $k\leq k_1-100 - 5\max\{\tilde{c}, -c\}$, for any $(\xi,\eta)\in supp\big(\varphi_{2}(\xi, \eta) \psi_k(\xi)\psi_{k_1}(\xi-\eta)\psi_{k_2}(\eta)  \varphi_{l;-m/2+\delta m}( l_{\mu,\nu}(\xi, \eta)) \big),   $ we have
\be\label{july21eqn11}
\begin{split}
 \forall a\in \{1,2\}, \quad |\Phi_{\mu, \nu}^a ( \xi ,\eta)| & = \big|\Lambda_a(\xi) - \mu \Lambda_2(\xi-\eta) -\nu \Lambda_1(\eta)\big| \\
&= \Big| \frac{2 \Lambda_a(\xi) |\eta|+2\mu\xi\cdot\eta+    (c_1^2-1)\mu(\xi_2-\eta_2)^2 + (c_2^2-1) \mu(\xi_3-\eta_3)^2  }{\mu\Lambda_1(\xi) +  \Lambda_2(\xi-\eta) +  \Lambda_1(\eta) } \Big|\\ 
&\gtrsim 2^{-k_2}\big( \frac{|\slashed \xi|^2 |\eta_1| }{|\xi_1|}  -2^{3+2\tilde{c}}\big( |\slashed \eta|^2 +  |\slashed \xi|^2\big)  \big) \gtrsim |\slashed \eta|^2 |\eta|^{-1} .\\ 
\end{split}
\ee
 
Motivated from the above estimate of phases, we localize further the size of $|\slashed \eta|/|\eta|$.  Recall    \eqref{july23eqn1}, for any $(\xi,\eta)\in supp\big(\varphi_{2}(\xi, \eta) \psi_k(\xi)\psi_{k_1}(\xi-\eta)\psi_{k_2}(\eta)\psi_{b }(|\slashed \eta|/|\eta|)   \varphi_{l;-m/2+\delta m}( l_{\mu,\nu}(\xi, \eta))\big), b\in \Z$,  we have $2b\leq l+10$.  

Moreover, recall  the detailed formula of $ q^a_{\mu, \nu} (\xi-\eta, \eta)$ in \eqref{june21eqn51}. From the estimate \eqref{addsymbolest}, the following estimate for the localized symbol,
\be\label{L1kernellocs}
 |  q^a_{\mu, \nu} (\xi-\eta, \eta) \varphi_{2}(\xi, \eta)  \psi_{b }(|\slashed \eta|/|\eta|)   \psi_k(\xi)\psi_{k_1}(\xi-\eta)\psi_{k_2}(\eta) |  \lesssim 2^{2b}.
\ee

Recall that $k_2  \geq -m/4-2\alpha m. $   Same as the previous case, from the above estimate of the phase,    to take advantage of high oscillation in time, we do normal form transformation. As a result, we have
\be\label{july21eqn5}
\begin{split}
 \int_{t_1}^{t_2} & \mathcal{J}_{k;k_1,k_2;l;2}^{a;\mu,\nu;j_1, j_2}(t, \xi) d t =\sum_{n=1,2} (-1)^n  \int_{\R^3} e^{i t_n \Phi_{\mu, \nu}^a ( \xi ,\eta)} \frac{q^a_{\mu, \nu} (\xi-\eta, \eta)}{i\Phi_{\mu, \nu}^a ( \xi ,\eta) } \widehat{h_{1;j_1,k_1}^{\mu}}(t_n , \xi-\eta) \widehat{h_{2;j_2,k_2}^{\nu}}(t_n , \eta)  \\ 
 & \quad \times \psi_k(\xi)   \varphi_{l;-m/2+\delta m}( l_{\mu,\nu}(\xi, \eta))   \varphi_{2}(\xi, \eta)  d\eta -  \int_{t_1}^{t_2} \int_{\R^3} e^{i t \Phi_{\mu, \nu}^a  ( \xi ,\eta)} \frac{q^a_{\mu, \nu} (\xi-\eta, \eta)}{i\Phi_{\mu, \nu}^a ( \xi ,\eta) } \\ 
 & \quad \times \p_t \big[ \widehat{h_{1;j_1,k_1}^{\mu}}(t, \xi-\eta) \widehat{h_{2;j_2,k_2}^{\nu}}(t, \eta)\big]  \psi_k(\xi)     \varphi_{l;-m/2+\delta m}( l_{\mu,\nu}(\xi, \eta))   \varphi_{2}(\xi, \eta)  d\eta dt.
\end{split}
\ee

We first consider the threshold case i.e.,  $l=-m/2+\delta m$.  From the estimate of phase \eqref{july21eqn11},  the estimate of the kernel in \eqref{L1kernellocs}, the estimate of the volume of support of $\eta$ in  \eqref{2023jan30eqn71},    the decay estimate \eqref{decayestimatefinal},  and the estimate \eqref{july23eqn91}  in Proposition \ref{Zest}, we have 
\be\label{july23eqn25}
\begin{split}
 \sum_{ \begin{subarray}{c} 
(k_1,k_2)\in \chi_k^1\\
  k_1\geq k+100 +5 \max\{\tilde{c}, -c\}
\end{subarray}}   2^{(1+\alpha)j} &\big\| \sum_{ \begin{subarray}{c}
j_i\in [-k_{i,-}, \infty)\cap \Z, i=1,2\\
\end{subarray} }  P_k\big[ \varphi_{j,k}(\cdot)\mathcal{F}^{-1}\big[    \int_{t_1}^{t_2}  \mathcal{J}_{k;k_1,k_2;l;2}^{a;\mu,\nu;j_1, j_2}(t, \xi) d t  \big]\big](x)\big\|_{L^2}\\
& \lesssim   \sum_{ \begin{subarray}{c} 
(k_1,k_2)\in \chi_k^1\\
  k_1\geq k+100 +5 \max\{\tilde{c}, -c\}
\end{subarray}}     2^{-k_2 + (1+\alpha)j+2\delta m } 2^{l/2} 2^{3k_1 + 2l-8k_{1,+}}   \epsilon_1^2\lesssim \epsilon_1^2.
\end{split}
\ee

For the non-threshold case, $l\in (-m/2+\delta m, -100  - 2\max\{\tilde{c}, -c\}]\cap \Z,$ as in the obtained estimate \eqref{july20eqn16}, we can rule out the case $\max\{j_1,j_2\}\leq m + l -\delta m $ by doing integration by parts in ``$\eta$'' $\delta^{-2}$ times. Now, we focus on the case $\max\{j_1,j_2\}\geq m + l -\delta m $. 

  From the estimate  \eqref{july21eqn11}, the $L^2-L^\infty$ type bilinear estimate, the $L^\infty$ decay estimate \eqref{decayestimatefinal}, and  the estimate \eqref{july13eqn58} in Lemma \ref{fixedtimeest}, we have
\[
 \sum_{ \begin{subarray}{c} 
(k_1,k_2)\in \chi_k^1\\
  k_1\geq k+100 +5 \max\{\tilde{c}, -c\}
\end{subarray}}   \sum_{ \begin{subarray}{c}
j_i\in [-k_{i,-}, \infty)\cap \Z, i=1,2\\
 \max\{j_1,j_2\}\geq   m+ l-\delta m \\
\end{subarray} } 2^{(1+\alpha)j}\|P_k\big[ \varphi_{j,k}(\cdot)\mathcal{F}^{-1}\big[    \int_{t_1}^{t_2}  \mathcal{J}_{k;k_1,k_2;l;2}^{a;\mu,\nu;j_1, j_2}(t, \xi) d t \big]\big](x)\|_{L^2}
\]
\be\label{july21eqn9}
\begin{split}
& \lesssim   \sum_{ \begin{subarray}{c} 
(k_1,k_2)\in \chi_k^1\\
  k_1\geq k+100 +5 \max\{\tilde{c}, -c\}
\end{subarray}}   \sum_{ \begin{subarray}{c}
j_i\in [-k_{i,-}, \infty)\cap \Z, i=1,2\\
 \max\{j_1,j_2\}\geq   m+ l-\delta m \\
\end{subarray} }         2^{-k_2 + m/4 + 2\alpha m  } 2^{(1+\alpha)j -(1+\alpha)  \max\{j_1,j_2\} } 2^{-m+  k_2-5k_{2,+}} \epsilon_1^2\\
& \lesssim  2^{- m/10}\epsilon_1^2. 
\end{split}
\ee

  $\oplus$\qquad The estimate of $\mathcal{J}_{k;k_1,k_2;l;3}^{a;\mu,\nu;j_1, j_2}(t, \xi).$

 Recall the definition of the cutoff function $\varphi_{3}(\xi, \eta)$ in  \eqref{july20eqn4}. From \eqref{july23eqn1}, for any $(\xi,\eta)\in supp\big(\varphi_{3}(\xi, \eta) $ $\psi_k(\xi)\psi_{k_1}(\xi-\eta)\psi_{k_2}(\eta)\psi_{b}( |\slashed\eta|/|\eta|)\big), b\in \Z$,  we have
\be\label{july21eqn23}
|\nabla_\eta\Phi_{\mu, \nu}^i ( \xi ,\eta) |\gtrsim 2^{2b} + \big|\frac{\slashed \xi -\slashed \eta}{|\xi-\eta|} + \frac{(c_1^2 \eta_2, c_2^2 \eta_3) }{|\eta|}\big|\gtrsim 2^{b}. 
  \ee
Based on the possible size of $|\slashed\eta|/|\eta|$, we decompose $\mathcal{J}_{k;k_1,k_2;l;3}^{a;\mu,\nu;j_1, j_2}(t, \xi) $ further as follows,
  \be\label{2023jan28eqn5}
  \begin{split}
\mathcal{J}_{k;k_1,k_2;l;3}^{a;\mu,\nu;j_1, j_2}(t, \xi) &= \sum_{b\in[-m/2+\delta m,2]\cap\Z} \mathcal{J}_{k;k_1,k_2;l;3;b}^{a;\mu,\nu;j_1, j_2}(t, \xi)\\
\mathcal{J}_{k;k_1,k_2;l;3;b}^{a;\mu,\nu;j_1, j_2}(t, \xi) &:=\int_{\R^3} e^{i t \Phi_{\mu, \nu}^a ( \xi ,\eta)} q^a_{\mu, \nu} (\xi-\eta, \eta) \widehat{h_{1;j_1,k_1}^{\mu}}(t, \xi-\eta) \widehat{h_{2;j_2,k_2}^{\nu}}(t, \eta) \psi_k(\xi) \varphi_{3}(\xi, \eta)  \\ 
 & \quad \times    \varphi_{l;-m/2+\delta m}( l_{\mu,\nu}(\xi, \eta))   \varphi_{ b;-m/2+\delta m}( |\slashed\eta|/|\eta|)  d\eta. 
\end{split}
\ee

We first consider the threshold case i.e.,  $b=-m/2+\delta m$. From the estimate of the symbol in \eqref{L1kernellocs},   and the volume of support of $\xi,\eta$,   we have 
\be\label{july23eqn38}
\begin{split}
 \sum_{ \begin{subarray}{c} 
(k_1,k_2)\in \chi_k^1\\
  k_1\geq k+100 +5 \max\{\tilde{c}, -c\}
\end{subarray}}  2^{(1+\alpha)j} &\big\| \sum_{ \begin{subarray}{c}
j_i\in [-k_{i,-}, \infty)\cap \Z, i=1,2\\
\end{subarray} }  P_k\big[ \varphi_{j,k}(\cdot)\mathcal{F}^{-1}\big[    \mathcal{J}_{k;k_1,k_2;3;b}^{\mu,\nu;j_1, j_2;l}(t, \xi) \big]\big](x)\big\|_{L^2}\\
& \lesssim  \sum_{ \begin{subarray}{c} 
(k_1,k_2)\in \chi_k^1\\
  k_1\geq k+100 +5 \max\{\tilde{c}, -c\}
\end{subarray}}     2^{2b + (1+\alpha)j} 2^{b }  2^{3k_1 + 2b- 8k_{1,+}}  \epsilon_1^2\lesssim 2^{-  5m/4}\epsilon_1^2.
\end{split}
\ee

For the non-threshold case, $b\in (-m/2+\delta m, 2]\cap \Z,$ from the   estimate \eqref{july21eqn23}, we can rule out the case $\max\{j_1,j_2\}\leq m +b-\delta m $ by doing integration by parts in ``$\eta$'' $\delta^{-2}$ times. Moreover, as in obtaining  
the rough estimate \eqref{july20eqn18}, from the estimate \eqref{L1kernellocs}, we rule out further the case $\max\{j_1,j_2\}\geq m +b-\delta m $ and $b\leq -10\delta m .$ 

Now, it would be sufficient to consider the case $\max\{j_1,j_2\}\geq m +b-\delta m $ and $b\in [ -10\delta m,2]\cap \Z.$   Due to symmetry between two inputs in the High $\times$ High type interaction, without loss of generality, we assume that $j_1=\max\{j_1,j_2\}.$    For any fixed $j\in [0, (1-\delta)^{-1}\big(\max\{m, -k_{-}\}+ \delta  m\big)  ]\cap \Z$, based on the size of $j_1$, we split into two sub-cases as follows. 

$\dagger$\qquad If $j_1\geq j+2b/(1+2\alpha).$

 From the estimate  \eqref{L1kernellocs}, the $L^2-L^\infty$ type bilinear estimate, and the $L^\infty$ decay estimate \eqref{decayestimatefinal},   we have
\[
 \sum_{ \begin{subarray}{c} 
(k_1,k_2)\in \chi_k^1\\
  k_1\geq k+100 +5 \max\{\tilde{c}, -c\}
\end{subarray}}   2^{(1+\alpha)j}\| \sum_{ \begin{subarray}{c}
j_i\in [-k_{i,-}, \infty)\cap \Z, i=1,2\\
j_1=\max\{j_1,j_2\}\geq   j+2b/(1+2\alpha)\\
\end{subarray} }  P_k\big[ \varphi_{j,k}(\cdot)\mathcal{F}^{-1}\big[    \int_{t_1}^{t_2} \mathcal{J}_{k;k_1,k_2;l;3;b}^{a;\mu,\nu;j_1, j_2}(t, \xi) d t \big]\big](x)\|_{L^2}
\]
\be\label{july21eqn21}
\lesssim    \sum_{ \begin{subarray}{c} 
(k_1,k_2)\in \chi_k^1\\
  k_1\geq k+100 +5 \max\{\tilde{c}, -c\}
\end{subarray}}    \sum_{ \begin{subarray}{c}
 \tilde{j}\geq     j+2b/(1+2\alpha) \\
\end{subarray} }    2^{m+ 2a} 2^{(1+\alpha)j -(1+\alpha)\tilde{j}+ \delta m } 2^{-m+  k_2-8k_{2,+}}\epsilon_1^2 \lesssim  2^{  \delta m +\delta a}\epsilon_1^2. 
\ee

$\dagger$\qquad If $j_1\leq j+2 b/(1+2\alpha).$

For this sub-case, we have $ m-100\delta m \leq j\leq m +10\delta m $. Recall \eqref{2023jan28eqn5}.  Note that, for any $\gamma\in \Z_+^3$, we have 
\be\label{july21eqn27}
\begin{split}
 \nabla_\xi^\gamma \mathcal{J}_{k;k_1,k_2;l;3;b}^{a;\mu,\nu;j_1, j_2}(t, \xi)  &=\sum_{\gamma_1+\gamma_2=\gamma} \widetilde{J}_{\gamma_1, \gamma_2}^\gamma(t, \xi) \\ 
 \widetilde{J}_{\gamma_1, \gamma_2}^\gamma(t, \xi)  &=\int_{\R^3} e^{i t \Phi_{\mu, \nu}^a ( \xi ,\eta)} \big(i t \nabla_{\xi}\Phi_{\mu, \nu}^a ( \xi ,\eta)\big)^{\gamma_1} \nabla_\xi^{\gamma_2}\big[ q^a_{\mu, \nu} (\xi-\eta, \eta) \widehat{h_{1;j_1,k_1}^{\mu}}(t, \xi-\eta)  \\ 
 & \quad \times  \psi_k(\xi)   \varphi_{3}(\xi, \eta)    \varphi_{l;-m/2+\delta m}( l_{\mu,\nu}(\xi, \eta))  \varphi_{b;-m/2+\delta m}( |\slashed\eta|/|\eta|)  \big] \widehat{h_{2;j_2,k_2}^{\nu}}(t, \eta)   d\eta. 
\end{split}
\ee

For $\widetilde{J}_{\gamma_1, \gamma_2}^\gamma(t, \xi) $, we do integration by parts in ``$\eta$'' $|\gamma_1|$ times. As a result, we have
\be\label{july21eqn28}
\begin{split}
 \widetilde{J}_{\gamma_1, \gamma_2}^\gamma(t, \xi)   &=\int_{\R^3} e^{i t \Phi_{\mu, \nu}^i ( \xi ,\eta)} \nabla_\eta \cdot \Big[ \frac{i \nabla_\eta \Phi_{\mu, \nu}^a ( \xi ,\eta) }{|\nabla_\eta \Phi_{\mu, \nu}^a ( \xi ,\eta)|^2} \circ\cdots \nabla_\eta \cdot \Big[ \frac{i \nabla_\eta \Phi_{\mu, \nu}^a ( \xi ,\eta) }{|\nabla_\eta \Phi_{\mu, \nu}^a ( \xi ,\eta)|^2} \big(i \nabla_{\xi}\Phi_{\mu, \nu}^a ( \xi ,\eta)\big)^{\gamma_1}\\ 
 &\qquad \times  \nabla_\xi^{\gamma_2}\big[ q^a_{\mu, \nu} (\xi-\eta, \eta) \varphi_{3}(\xi, \eta) \psi_k(\xi)  \widehat{h_{1;j_1,k_1}^{\mu}}(t, \xi-\eta)    \varphi_{l;-m/2+\delta m}( l_{\mu,\nu}(\xi, \eta))\\ 
 &\qquad \times       \varphi_{b;-m/2+\delta m}( |\slashed\eta|/|\eta|)  \big] \widehat{h_{2;j_2,k_2}^{\nu}}(t, \eta) \Big]\circ\cdots\Big]  d\eta. 
\end{split}
\ee

Recall the range of $k,j,j_1,k_1,$ and $l$. The worst scenario happens when $\nabla_\eta^\kappa \nabla_\xi^{\gamma_1}$ hits $ \widehat{h_{1;j_1,k_1}^{\mu}}(t, \xi-\eta)$, where $\gamma\in \Z_+^3,$ s.t., $|\kappa|=|\gamma_2|$. Moreover, from the estimate \eqref{july21eqn23} and \eqref{L1kernellocs}, for any $n\in \Z_+$,  from the estimate \eqref{july27eqn3} in Lemma \ref{kernelest},  the following estimate holds for the kernel of typical symbols appeared in \eqref{july21eqn28}, 
\[
\begin{split}
  \|\mathcal{F}^{-1}_{(\xi, \eta)\rightarrow (x,y)}\big[& \underbrace{ \frac{i \nabla_\eta \Phi_{\mu, \nu}^a ( \xi ,\eta) }{|\nabla_\eta \Phi_{\mu, \nu}^a ( \xi ,\eta)|^2} \otimes \circ\cdots \circ \otimes\frac{i\nabla_\eta \Phi_{\mu, \nu}^a ( \xi ,\eta) }{|\nabla_\eta \Phi_{\mu, \nu}^a ( \xi ,\eta)|^2}}_{\text{$n$-times }}     q^a_{\mu, \nu} (\xi-\eta, \eta)    \varphi_{l;-m/2+\delta m}( l_{\mu,\nu}(\xi, \eta))    \\ 
&\times  \psi_k(\xi)\psi_{k_1}(\xi-\eta)   \psi_{k_2}(\eta) \varphi_{b;-m/2+\delta m}( |\slashed\eta|/|\eta|)  \big]  
 \big]\|_{L^1_{x,y}}  
    \lesssim 2^{-(6+\delta)b - (n-2)  b}.
    \end{split}
\]

From the above estimate, the $L^2-L^\infty$ type bilinear estimate, and the $L^\infty$ decay estimate \eqref{decayestimatefinal}, for any $\gamma\in \Z_+^3$,  we have
\[
  \sum_{ \begin{subarray}{c} 
(k_1,k_2)\in \chi_k^1\\
  k_1\geq k+100 +5 \max\{\tilde{c}, -c\}
\end{subarray}}     2^{(1+ \alpha)j}\| \sum_{ \begin{subarray}{c}
j_i\in [-k_{i,-}, \infty)\cap \Z, i=1,2\\
j_1=\max\{j_1,j_2\}\leq    j+2b/(1+2\alpha)\\
\end{subarray} } P_k\big[ \varphi_{j,k}(\cdot)\mathcal{F}^{-1}\big[      \widetilde{J}_{\gamma_1, \gamma_2}^\gamma(t, \xi) \big]\big](x)\|_{L^2}
\]
\[
\lesssim    \sum_{ \begin{subarray}{c} 
(k_1,k_2)\in \chi_k^1\\
  k_1\geq k+100 +5 \max\{\tilde{c}, -c\}
\end{subarray}}    \sum_{ \begin{subarray}{c}
\tilde{j}\leq     j+2b /(1+2\alpha) \\
\end{subarray} }    2^{-(6+\delta)b - (|\gamma|-2) b + |\gamma|j_1}  2^{(1+\alpha)j -(1+\alpha)\tilde{j}+ \delta m } 2^{-m+  k_2-5k_{2,+}}\epsilon_1^2 
\]
\be\label{july21eqn31}
  \lesssim 2^{|\gamma|j -(6 +\delta)b -  |\gamma|  b + 2|\gamma|b/(1+2\alpha)} 2^{ -m+ \delta m}\epsilon_1^2     . 
\ee

 Now, we let $|\gamma|=10$, which is sufficient for our purpose.   Recall \eqref{july21eqn27}. From the estimate \eqref{july21eqn31}, we have
 \[
 \sum_{ \begin{subarray}{c} 
(k_1,k_2)\in \chi_k^1\\
  k_1\geq k+100 +5 \max\{\tilde{c}, -c\}
\end{subarray}}   2^{(1+\alpha)j}\| \sum_{ \begin{subarray}{c}
j_i\in [-k_{i,-}, \infty)\cap \Z, i=1,2\\
j_1=\max\{j_1,j_2\}\leq    j+2b/(1+2\alpha)\\
\end{subarray} } P_k\big[ \varphi_{j,k}(\cdot)\mathcal{F}^{-1}\big[     \mathcal{J}_{k;k_1,k_2;l;3;b}^{a;\mu,\nu;j_1, j_2}(t, \xi) \big]\big](x)\|_{L^2}
\]
 \[
\lesssim \sum_{ \begin{subarray}{c} \gamma, \gamma_1, \gamma_2\in \Z_+^3 \\ 
|\gamma|=10,
\gamma_1+\gamma_2=\gamma\\
\end{subarray}}  \sum_{ \begin{subarray}{c} 
(k_1,k_2)\in \chi_k^1\\
  k_1\geq k+100 +5 \max\{\tilde{c}, -c\}
\end{subarray}}  \|\sum_{ \begin{subarray}{c}
j_i\in [-k_{i,-}, \infty)\cap \Z, i=1,2\\
j_1=\max\{j_1,j_2\}\leq    j+2b/(1+2\alpha)\\
\end{subarray} } 2^{(1+\alpha)j-|\gamma|j} P_k\big[ \varphi_{j,k}(\cdot)\mathcal{F}^{-1}\big[     \widetilde{J}_{\gamma_1, \gamma_2}^\gamma(t, \xi) \big]\big](x)\|_{L^2}
\]
\be\label{july21eqn32}
\lesssim 2^{  -(6+\delta)b   + 10(1-2\alpha) b/(1+2\alpha)} 2^{ -m+ \delta m}\epsilon_1^2  \lesssim 2^{ -m+ \delta m+b}\epsilon_1^2     .
\ee

After summing up the case $b\in [ -10\delta m,2]\cap \Z.$  , $l\in [2b,  -100  - 2\max\{\tilde{c}, -c\}]\cap \Z$ in the obtained estimates \eqref{july21eqn21} and \eqref{july21eqn32}, we  finish the proof for the case  $l\in [ -m/2+\delta m,  -100  - 2\max\{\tilde{c}, -c\}]\cap \Z$. 

It remains to consider the case $l\in [- 100  - 5\max\{\tilde{c}, -c\} , 2]\cap \Z$.   Note that, from the rough estimate \eqref{july23eqn1}, the lower bound $2^{ l}$  plays the same role of  the lower bound $2^b$ we use in \eqref{july21eqn23} for  the estimate of $\mathcal{J}_{k;k_1,k_2;l;3;b}^{a;\mu,\nu;j_1, j_2}(t, \xi)$.  Hence, after rerun the argument used for the estimate $   \mathcal{J}_{k;k_1,k_2;l;3;b}^{a;\mu,\nu;j_1, j_2}(t, \xi) $, the estimate of  $   \mathcal{J}_{k;k_1,k_2  }^{a;\mu,\nu }(t, \xi) $ follows similarly for  the case    $l\in [- 100  - 2\max\{\tilde{c}, -c\} , 2]\cap \Z$.  Hence finishing the proof of our desired estimate \eqref{july3eqn41}. 
\end{proof}

\subsection{The High $\times$ Low type and the Low $\times$ High type interactions}

In this subsection, we consider the High $\times$ Low type and the Low $\times$ High type  interactions with one of  the input frequencies  much smaller than the out frequency, e.g., $\min\{k_1, k_2\}\leq k- 5\max\{\tilde{c}, -c\}  -100.$

In the following Lemma, we first consider the non-resonance case. 
\begin{lemma}\label{lowhighosc}
Under the bootstrap assumption \eqref{bootstrap}, for any $t \in [2^{m-1}, 2^m]\subset[0, T], m\in \Z_+$, we have
\be\label{july21eqn41}
\sup_{k\in \Z}\sum_{a=1,2}\sum_{\mu\in\{+,-\}} \sum_{\begin{subarray}{c}
(k_1,k_2)\in \chi_k^2\\ 
 k_1  \leq k- 5\max\{\tilde{c}, -c\}  -100\\
\end{subarray}
 } \|\int_{t_1}^{t_2} \mathcal{F}^{-1}\big[   \mathcal{J}_{k;k_1,k_2}^{a;\mu,- }(t, \xi) \big]  d t  \|_{Z} \lesssim \epsilon_1^2,
\ee
\be\label{july21eqn43}
\sup_{k\in \Z}\sum_{a=1,2}\sum_{\nu\in\{+,-\}} \sum_{\begin{subarray}{c}
(k_1,k_2)\in \chi_k^3\\ 
 k_2  \leq k- 5\max\{\tilde{c}, -c\}  -100\\
\end{subarray}
 } \|\int_{t_1}^{t_2} \mathcal{F}^{-1}\big[   \mathcal{J}_{k;k_1,k_2}^{a;-,\nu }(t, \xi) \big]  d t  \|_{Z} \lesssim  \epsilon_1^2.
\ee
\end{lemma}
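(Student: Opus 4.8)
\textbf{Plan of proof for Lemma \ref{lowhighosc}.}
The statement concerns the $\mathrm{High}\times\mathrm{Low}$ and $\mathrm{Low}\times\mathrm{High}$ interactions in the non-resonance case, where the low frequency satisfies $\min\{k_1,k_2\}\le k-5\max\{\tilde c,-c\}-100$ and one of the two half-wave factors carries the sign making the phase non-stationary. By the coordinate change $(\xi,\eta)\mapsto(\xi,\xi-\eta)$ the two estimates \eqref{july21eqn41} and \eqref{july21eqn43} are symmetric, so I would treat only \eqref{july21eqn41}, i.e.\ $\nu=-$ and $k_1\le k-5\max\{\tilde c,-c\}-100$. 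As in the proofs of Lemma \ref{fixedtimeest} and Lemma \ref{highhighWfixed}, I would first do the space localization \eqref{july13eqn70} of both inputs into dyadic pieces $h_{1;j_1,k_1}^\mu$, $h_{2;j_2,k_2}^{-}$, and split the analysis according to the size $2^j$ of the output spatial variable. The regime $j\ge\max\{m,-k_-\}+\delta(m+j)$ is already covered verbatim by the integration-by-parts-in-$\xi$ arguments \eqref{july13eqn72} and \eqref{july13eqn52} (where the crude bound on $\nabla_\xi$ of the full phase is $\sim 2^j$ once $\min\{j_1,j_2\}\le(1-\delta)j$, and the near-diagonal $\min\{j_1,j_2\}\ge(1-\delta)j$ piece is handled by the bilinear $L^2$--$L^\infty$ estimate plus Sobolev), so the real content is the range $j\le\max\{m,-k_-\}+\delta(m+j)$, i.e.\ $j\lesssim m+\delta m$.

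In that regime the key structural fact is that the phase is genuinely non-resonant in \emph{time}: since $k_1\le k-100-5\max\{\tilde c,-c\}$ we have $|\xi-\eta|\ll|\xi|\sim|\eta|$, and because the low frequency cannot flip the sign of the first component, a computation analogous to \eqref{june28eqn31} gives
\[
|\Phi_{\mu,-}^a(\xi,\eta)|=\big|\Lambda_a(\xi)+\Lambda_2(\eta)-\mu\Lambda_1(\xi-\eta)\big|\sim|\xi|\gtrsim 2^k .
\]
So I would integrate by parts in time once, exactly as in \eqref{july3eqn1}/\eqref{july21eqn1}, producing two endpoint terms with the multiplier $q^a_{\mu,-}/\Phi^a_{\mu,-}$ and two bulk terms in which $\partial_t$ hits one of the two profiles. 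The kernel of the symbol $q^a_{\mu,-}(\xi-\eta,\eta)/\Phi^a_{\mu,-}(\xi,\eta)$ localized at $(k,k_1,k_2)$ has $L^1_{x,y}$-norm $\lesssim 2^{-k+5k_+}$ by Lemma \ref{kernelest} (here one has to be a little careful when $k\le 0$ is very negative, but then $|\xi|\le|\xi-\eta|+|\eta|$ and the whole piece is tiny by the volume of support, exactly as in the $k\le -m$ cases elsewhere). For the endpoint terms I would then use the bilinear $L^2$--$L^\infty$ estimate, placing $h_{1;j_1,k_1}$ or $h_{2;j_2,k_2}$ (whichever has the larger $j$) in $L^2$ and the linear evolution of the other in $L^\infty$ via the decay estimate \eqref{decayestimatefinal}; the $2^{(1+\alpha)j}$ weight is absorbed because $j\lesssim m$, the profile $L^2$-norm is $\lesssim 2^{-(1+\alpha)\max\{j_1,j_2\}+\delta m}$ from the $Z$-norm bootstrap, and the decay gains $2^{-m}$. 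The case $\max\{j_1,j_2\}\le(1-\delta)m$ is removed first by integration by parts in $\eta$ (using $|\nabla_\eta\Phi|\gtrsim 1$ in this regime, cf.\ \eqref{june28eqn31}), so one is left with $\max\{j_1,j_2\}\ge(1-\delta)m$, where the above counting closes with room to spare. For the two bulk terms the same estimate applies with one profile replaced by $\partial_t h$, which is controlled either by the $Z$-norm of the nonlinearity from Lemma \ref{fixedtimeest} (for the $L^2$ slot) or by the $L^\infty_\xi$ estimate \eqref{july23eqn91} of Proposition \ref{Zest} together with the linear decay (for the $L^\infty$ slot); time integration over $[2^{m-1},2^m]$ costs only $2^m$, which is beaten by the combined $2^{-k+5k_+}\cdot 2^{-m}\cdot 2^{-m}$-type gain plus the smallness from $\partial_t h$.

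The main obstacle, and the point requiring the most care, is the bookkeeping when the low frequency $2^{k_1}$ is itself very small and simultaneously $k$ is small, since then the kernel bound $2^{-k+5k_+}$ degenerates; I expect to handle this exactly as the analogous borderline regions are handled in Lemma \ref{highhighWfixed} and Lemma \ref{highhighWfixed2}, namely by further splitting on whether $k\le -2m/3-\alpha m$ (where the volume of support in $\eta$, which is $\lesssim 2^{3k_1}$ or $2^{3k}$, already wins against everything) or $k\ge -2m/3-\alpha m$ (where $2^{-k}\lesssim 2^{2m/3+\alpha m}$ is an acceptable loss against the two factors of $2^{-m}$). A secondary point is that for the High $\times$ Low case $k_1$ is the low frequency while the output and $k_2$ are comparable, so ``high $\times$ low'' here really means the \emph{first} input is low; one must keep the asymmetry straight when deciding which profile goes into $L^2$, but since the bound $\|h_{a;j,k}\|_{L^2}\lesssim 2^{-(1+\alpha)j+\delta m}\epsilon_1$ and the decay $\|e^{-it\Lambda_a}P_k h_{a}\|_{L^\infty}\lesssim 2^{-m+k-5k_+}\epsilon_1$ are available for both $a$, both assignments close. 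Once these borderline regions are dispatched, summing over $k_1,k_2,j,j_1,j_2$ produces at worst logarithmic losses, and the total is $\lesssim\epsilon_1^2$ as claimed.
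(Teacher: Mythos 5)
Your overall architecture (space localization, disposal of the very large $j$ regime by $\xi$-integration by parts, the observation $|\Phi^a_{\mu,-}(\xi,\eta)|\sim 2^k$ and a normal form for $k$ not too negative, with the endpoint/bulk terms estimated by $L^2$--$L^\infty$ bilinear estimates, \eqref{decayestimatefinal}, Lemma \ref{fixedtimeest} and Proposition \ref{Zest}) matches the paper's proof. However, there is a genuine gap at the step where you ``remove the case $\max\{j_1,j_2\}\le(1-\delta)m$ by integration by parts in $\eta$, using $|\nabla_\eta\Phi|\gtrsim 1$, cf.\ \eqref{june28eqn31}''. The bound \eqref{june28eqn31} is specific to the High$\times$High configuration with $\mu\nu=+$ and $|\xi|\ll|\eta|$, where the two unit vectors in $\nabla_\eta\Phi$ are forced to be nearly parallel. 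In the present Low$\times$High configuration the direction of the low-frequency factor $\xi-\eta$ is unconstrained, and the space-resonant set is nonempty: e.g.\ with $\mu=+$, $\nu=-$, $\eta=(R,0,0)$, $\xi-\eta=(-r,0,0)$, $0<r\ll R$, all the frequency constraints of \eqref{july21eqn41} hold, $\Phi^a_{+,-}\sim 2^k$, yet $\nabla_\eta\Phi^a_{+,-}(\xi,\eta)=\frac{\xi-\eta}{|\xi-\eta|}+\frac{(\eta_1,c_1^2\eta_2,c_2^2\eta_3)}{\Lambda_2(\eta)}=0$. In general one only has $|\nabla_\eta\Phi^a_{\mu,\nu}|\gtrsim l_{\mu,\nu}(\xi,\eta)$ as in \eqref{july23eqn1}, which can be arbitrarily small near $\slashed\eta=0$ with $\xi-\eta$ antiparallel to the $x_1$-axis.

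This matters quantitatively: your closing mechanism for the endpoint and bulk terms relies on the factor $2^{-(1+\alpha)\max\{j_1,j_2\}}$ with $\max\{j_1,j_2\}\ge(1-\delta)m$ to beat the $Z$-weight $2^{(1+\alpha)j}$ with $j$ as large as $\approx m$. Near the space-resonant set, with $j_1,j_2$ small and $j\sim m$, your own bookkeeping gives at best $2^{(1+\alpha)j}\cdot 2^{-k}\cdot 2^{\delta m}\cdot 2^{-m+k_1}\sim 2^{\alpha m+\delta m}$, which grows in $m$ and does not close. The paper fills exactly this hole by keeping the dyadic decomposition in $l=l_{\mu,\nu}(\xi,\eta)$ with threshold $2^{-m/2+\delta m}$ (as in \eqref{july21eqn71}): above threshold the $\eta$-integration by parts is performed with the weaker gain $2^{-l}$, reducing only to $\max\{j_1,j_2\}\ge(1-\delta)m+l$, while at the threshold one uses the null-structure smallness of the localized symbol, $|q^a_{\mu,\nu}|\lesssim 2^l$ as in \eqref{july13eqn11}, together with the volume-of-support estimate \eqref{2023jan30eqn71}, to produce the extra factors $2^{l}$, $2^{l/2}$ that compensate the $2^{(1+\alpha)j}$ weight (this is precisely \eqref{july23eqn41} and \eqref{july23eqn44}). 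Your proposal uses neither the $l$-localization nor the symbol smallness, so as written the space-resonant region is not controlled; incorporating that decomposition (and the paper's split at $k\le -m/4-2\alpha m$ rather than your $-2m/3$ heuristic, which for the boundary terms has only one factor of $2^{-m}$ available) is necessary to complete the argument.
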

\begin{proof}
Since only rough estimates of the phase $\Phi_{\mu, \nu}^a ( \xi ,\eta)$ will be used in the non-resonance case,  the desired estimates \eqref{july21eqn41} and \eqref{july21eqn43} can be proved similarly, we only focus on the proof of the estimate \eqref{july21eqn41}. 

 Recall \eqref{systemeqnprof}. Same as what we did in \eqref{july13eqn70}, we also do space localization for two inputs. As in the proof of Lemma \ref{fixedtimeest}, we first rule out the case   $j\geq  \max\{m, -k_{-}\}+ \delta (m+j) .$ It would be sufficient to consider the case  $j\leq  \max\{m, -k_{-}\}+ \delta (m+j) .$  As in the decomposition in  \eqref{july21eqn71},  based on the size of $l_{\mu, \nu}(\xi, \eta),$ we do dyadic decomposition for   $\mathcal{J}_{k;k_1,k_2}^{a;\mu,\nu;j_1, j_2}(t, \xi) $. 

  Moreover, as the estimates \eqref{july21eqn81} and \eqref{july21eqn84} are still valid, it would be sufficient to consider  the  non-threshold  case $l\in (-m/2+\delta m, \tilde{c}+10]\cap \Z$,  $\max\{j_1,j_2\}\geq (1-\delta)m + l$ and the  threshold case $l=-m/2+\delta m$.

Based on the size of $k$, we split into two cases as follows. 

 $\bullet$\quad If $k\leq -m/4 -2\alpha m .$

 We first consider the  threshold case $l=-m/2+\delta m$. From the $L^2-L^\infty$ type bilinear estimate, the volume of support of the input putted in $L^2$, and the decay estimate \eqref{decayestimatefinal},  since $k\leq -m/4-2 \alpha m$,   we have
\be\label{july23eqn41}
\begin{split}
 & \sum_{\begin{subarray}{c}
(k_1,k_2)\in \chi_k^2\\ 
 k_1    \leq k- 5\max\{\tilde{c}, -c\}  -100 
 \end{subarray}  }    2^{(1+\alpha)j}\big\| \sum_{ \begin{subarray}{c}
j_i\in [-k_{i,-}, \infty)\cap \Z, i=1,2\\
\end{subarray} }  P_k\big[ \varphi_{j,k}(\cdot)\mathcal{F}^{-1}\big[   \mathcal{J}_{k;k_1,k_2;l}^{\mu,\nu;j_1, j_2}(t, \xi) \big]\big](x)\big\|_{L^2}\\
 &  \lesssim     \sum_{\begin{subarray}{c}
(k_1,k_2)\in \chi_k^2\\ 
 k_1    \leq k- 5\max\{\tilde{c}, -c\}  -100 
 \end{subarray}  }     2^{l + (1+\alpha)j} 2^{ l/2}2^{-m+  k_1} \epsilon_1^2\lesssim 2^{- (1+10\delta)m}\epsilon_1^2.\\ 
 \end{split}
\ee
For   the  non-threshold case $l\in (-m/2+\delta m, \tilde{c}+10]\cap \Z$, by using the strategies used in obtaining \eqref{july13eqn16}, we have
\be\label{july22eqn2}
 \begin{split}
&     \sum_{\begin{subarray}{c}
(k_1,k_2)\in \chi_k^2\\ 
 k_1    \leq k- 5\max\{\tilde{c}, -c\}  -100 
 \end{subarray}  }    \sum_{ \begin{subarray}{c}
j_i\in [-k_{i,-}, \infty)\cap \Z, i=1,2\\
\max\{j_1,j_2\}\geq (1-\delta)m+  l \\
\end{subarray} } 2^{(1+\alpha)j}\|P_k\big[ \varphi_{j,k}(\cdot)\mathcal{F}^{-1}\big[   \mathcal{J}_{k;k_1,k_2;l}^{\mu,\nu;j_1, j_2}(t, \xi) \big]\big](x)\|_{L^2}\\ 
&\lesssim   \sum_{\begin{subarray}{c}
(k_1,k_2)\in \chi_k^2\\ 
 k_1    \leq k- 5\max\{\tilde{c}, -c\}  -100 
 \end{subarray}  }   \sum_{ \begin{subarray}{c}
j_i\in [-k_{i,-}, \infty)\cap \Z, i=1,2\\
\max\{j_1,j_2\}\geq (1-\delta)m + l \\
\end{subarray} }2^{(1+\alpha)j+ l} 2^{-(1+\alpha)\max\{j_1,j_2\} + \delta m }   \min\big\{2^{3k_1/2}, 2^{-m+  k}\big\} \epsilon_1^2\\
& \lesssim 2^{-m- \delta m}\epsilon_1^2.
 \end{split}
\ee
Hence finishing the proof of the case  $k\leq  -m/4 -2\alpha m $. 

 $\bullet$\quad If  $k\geq  -m/4 -2\alpha m$.

Since $ k_1    \leq k- 5\max\{\tilde{c}, -c\}  -100  $ and $\nu=-$,  for this case, we have
 \[
 \forall a \in\{1,2\}, \qquad |\Phi_{\mu, \nu}^a ( \xi ,\eta)|\sim 2^k\gtrsim 2^{-m/4-2\alpha m}. 
 \]

 To take advantage of high oscillation in time, we do normal form transformation once. As a result, we have
\be\label{2023jan16eqn34}
 \begin{split}
  &\int_{t_1}^{t_2}\mathcal{J}_{k;k_1,k_2;l}^{a;\mu,\nu;j_1, j_2}(t, \xi) d t  =\sum_{n=1,2} \int_{\R^3} e^{i t_n \Phi_{\mu, \nu}^a ( \xi ,\eta)} \frac{(-1)^nq^a_{\mu, \nu} (\xi-\eta, \eta)}{i \Phi_{\mu, \nu}^a ( \xi ,\eta) } \widehat{h_{1;j_1,k_1}^{\mu}}(t_n, \xi-\eta) \widehat{h_{2;j_2,k_2}^{\nu}}(t_n, \eta)  \\
  & \times      \varphi_{l;-m/2+\delta m}( l_{\mu,\nu}(\xi, \eta))  \psi_k(\xi)  d\eta -  \int_{t_1}^{t_2} \int_{\R^3} e^{i t \Phi_{\mu, \nu}^a ( \xi ,\eta)} \frac{ q^a_{\mu, \nu} (\xi-\eta, \eta)\psi_k(\xi) }{i \Phi_{\mu, \nu}^a ( \xi ,\eta) }\\ 
   & \times \p_t\big( \widehat{h_{1;j_1,k_1}^{\mu}}(t , \xi-\eta) \widehat{h_{2;j_2,k_2}^{\nu}}(t , \eta)\big)       \varphi_{l;-m/2+\delta m}( l_{\mu,\nu}(\xi, \eta))    d\eta dt.
   \end{split}
 \ee

  By using the same strategy used in   obtaining \eqref{july13eqn15}, from the estimate in Lemma  and the volume of support of frequencies,   for the  threshold case $l=-m/2+\delta m$, we have
\be\label{july23eqn44}
\begin{split}
 &  \sum_{\begin{subarray}{c}
(k_1,k_2)\in \chi_k^2\\ 
 k_1    \leq k- 5\max\{\tilde{c}, -c\}  -100 
 \end{subarray}  }   2^{(1+\alpha)j}\big\| \sum_{ \begin{subarray}{c}
j_i\in [-k_{i,-}, \infty)\cap \Z, i=1,2\\
\end{subarray} }  P_k\big[ \varphi_{j,k}(\cdot)\mathcal{F}^{-1}\big[      \int_{t_1}^{t_2}\mathcal{J}_{k;k_1,k_2;l}^{a; \mu,\nu;j_1, j_2}(t, \xi) d t  \big]\big](x)\big\|_{L^2}\\
 &\lesssim   \sum_{\begin{subarray}{c}
(k_1,k_2)\in \chi_k^2\\ 
 k_1    \leq k- 5\max\{\tilde{c}, -c\}  -100 
 \end{subarray}  }     2^{-k+ l + (1+\alpha)j} 2^{3k/2+ l/2}2^{-m+k_1-4k_+} \epsilon_1^2\lesssim  2^{-\delta m}\epsilon_1^2.\\ 
 \end{split}
\ee
For   the  non-threshold case $l\in (-m/2+\delta m, 2]\cap \Z$, by using the strategies used in obtaining \eqref{july13eqn16}, we have
\be\label{july22eqn13}
 \begin{split}
&   \sum_{\begin{subarray}{c}
(k_1,k_2)\in \chi_k^2\\ 
 k_1    \leq k- 5\max\{\tilde{c}, -c\}  -100 
 \end{subarray}  }   \sum_{ \begin{subarray}{c}
j_i\in [-k_{i,-}, \infty)\cap \Z, i=1,2\\
\max\{j_1,j_2\}\geq (1-\delta)m+ l \\
\end{subarray} } 2^{(1+\alpha)j}\|P_k\big[ \varphi_{j,k}(\cdot)\mathcal{F}^{-1}\big[     \int_{t_1}^{t_2}\mathcal{J}_{k;k_1,k_2;l}^{a;\mu,\nu;j_1, j_2}(t, \xi) d t  \big]\big](x)\|_{L^2}\\ 
&\lesssim   \sum_{\begin{subarray}{c}
(k_1,k_2)\in \chi_k^2\\ 
 k_1    \leq k- 5\max\{\tilde{c}, -c\}  -100 
 \end{subarray}  }  \sum_{ \begin{subarray}{c}
j_i\in [-k_{i,-}, \infty)\cap \Z, i=1,2\\
\max\{j_1,j_2\}\geq (1-\delta)m + l \\
\end{subarray} }2^{(1+\alpha)j+ l} 2^{-(1+\alpha)\max\{j_1,j_2\}   }   2^{-3m/4+(\alpha+10\delta)m-k} \\ 
&\times \min\{2^{3k_1/2+m},1\} \epsilon_1^2   \lesssim 2^{-m/4}\epsilon_1^2.
 \end{split}
\ee
Hence finishing the proof of the non-resonance case.
\end{proof}

Now, we consider the time-resonance case. 
\begin{lemma}\label{lowhighess}
Under the bootstrap assumption \eqref{bootstrap}, for any $t \in [2^{m-1}, 2^m]\subset[0, T], m\in \Z_+$,   we have
\be\label{july21eqn45}
\sup_{k\in \Z}\sum_{a=1,2} \sum_{\mu\in\{+,-\}}   \sum_{(k_1,k_2)\in \chi_k^2, k_1  \leq   k- 100-5\max\{\tilde{c}, -c\} } \|\int_{t_1}^{t_2} \mathcal{F}^{-1}\big[   \mathcal{J}_{k;k_1,k_2}^{a;\mu,+ }(t, \xi) \big]  d t  \|_{Z} \lesssim 2^{  \delta m }\epsilon_1^2,
\ee
\be\label{july21eqn46}
\sup_{k\in \Z}\sum_{a=1,2}  \sum_{\nu\in\{+,-\}} \sum_{(k_1,k_2)\in \chi_k^3, k_2  \leq   k- 100-5\max\{\tilde{c}, -c\} } \|\int_{t_1}^{t_2} \mathcal{F}^{-1}\big[   \mathcal{J}_{k;k_1,k_2}^{a;+,\nu }(t, \xi) \big]  d t  \|_{Z} \lesssim 2^{  \delta m }\epsilon_1^2.
\ee
\end{lemma}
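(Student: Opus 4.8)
The plan is to follow the scheme of Lemmas~\ref{highhighWfixed2} and~\ref{lowhighosc}, refined to accommodate the nonempty time-resonance set. The estimates \eqref{july21eqn45} and \eqref{july21eqn46} are proved in the same way, with the roles of $\Lambda_1,\Lambda_2$ and of the two inputs interchanged, so I will focus on \eqref{july21eqn45}, i.e.\ the High $\times$ Low case with the resonant sign $\nu=+$. Recalling \eqref{systemeqnprof}, I would localize the two inputs in physical space (indices $j_1,j_2$) and decompose the size $2^l$ of $l_{\mu,+}(\xi,\eta)$ over $l\in[-m/2+\delta m,\tilde c+10]\cap\Z$ exactly as in \eqref{july21eqn71}. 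The bounds \eqref{july13eqn72} and \eqref{july13eqn52} already dispose of the range $j\ge\max\{m,-k_{-}\}+\delta(m+j)$; and since the lower bounds \eqref{july21eqn81} and \eqref{july21eqn84} on $|\nabla_\eta\Phi^a_{\mu,+}|$ remain valid, repeated integration by parts in $\eta$ (as in \eqref{july20eqn16}) reduces the non-threshold indices $l>-m/2+\delta m$ to $\max\{j_1,j_2\}\ge(1-\delta)m+l$. It then remains to treat the threshold $l=-m/2+\delta m$ and the non-threshold case with $\max\{j_1,j_2\}\ge(1-\delta)m+l$, which I would split according to the output frequency $2^k$.

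If $k\le -2m/3-\alpha m$ or $k_2\le -m/4-2\alpha m$, the estimate should follow just as in \eqref{july23eqn18}, \eqref{july20eqn16} and \eqref{july20eqn18}: from the null-structure bound \eqref{july13eqn11} on the localized symbol, the volume estimate \eqref{2023jan30eqn1} for the $\eta$-support, the decay estimate \eqref{decayestimatefinal} and the $L^2$--$L^\infty$ bilinear estimate, the large negative powers $2^l$, $2^{k_1}$, $2^k$ (together with the $2^{-(1+\alpha)\max\{j_1,j_2\}}$ gain in the non-threshold case) more than compensate the crude factor $2^m$ coming from the time integral.

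The substantial case is $k\ge -2m/3-\alpha m$, $k_2\ge -m/4-2\alpha m$. On the support of $\varphi_{l;-m/2+\delta m}(l_{\mu,+})\psi_k\psi_{k_1}\psi_{k_2}$ one has $\mu\,\mathrm{sgn}(\xi_1-\eta_1)=\mathrm{sgn}(\eta_1)$ and $|\slashed \xi|/|\xi|,\,|\slashed \xi-\slashed \eta|/|\xi-\eta|\lesssim 2^{l/2}$, so Taylor expanding the phases \eqref{phases} around the $x_1$-axis (as in the derivation of \eqref{2023jan30eqn51}) gives $|\Phi^1_{\mu,+}|\lesssim |\slashed \xi|^2/|\xi|+2^{l+k_1}+2^{2k_1-k}$ and $|\Phi^2_{\mu,+}|\lesssim 2^{l+k_1}+2^{2k_1-k}$. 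In particular $\Phi^2_{\mu,+}$ genuinely vanishes near the $x_1$-axis (this is the time-resonance set), while $\Phi^1_{\mu,+}$ is bounded below only away from $\slashed\xi=0$; here the hypothesis $(c_1,c_2)\in(0,1)^2\cup(1,\infty)^2$ enters, since it forbids cancellation in $\Lambda_1(\xi)-\Lambda_2(\xi)$ and gives $|\Lambda_1(\xi)-\Lambda_2(\xi)|\sim|\slashed \xi|^2/|\xi|$ with constants governed by \eqref{2023jan28eqn1}. I would therefore insert an extra dyadic decomposition of the size $2^b$ of $|\slashed \xi|/|\xi|$, $b\in[-m/2+\delta m,\tilde c+10]\cap\Z$ (so $2^b\lesssim 2^{l/2}$), in the spirit of the decomposition \eqref{july20eqn4} of Lemma~\ref{highhighWfixed2}. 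On the part of the $a=1$ channel where the phase carries enough oscillation in $t$ (i.e.\ where $2^{2b+k}$ dominates $2^{l+k_1}+2^{2k_1-k}$ and is not too small) I would perform a normal form transformation in time, controlling the boundary and $\partial_t$-terms as in \eqref{july3eqn1}--\eqref{july3eqn7} and \eqref{2023jan16eqn34}--\eqref{july22eqn13} by means of the kernel estimate of Lemma~\ref{kernelest} (the divisor $1/\Phi^1_{\mu,+}$ contributing a factor $\lesssim 2^{-2b-k}$ to the $L^1$-norm of the kernel), the $L^\infty_\xi$-estimate of $\partial_t\widehat{h_a}$ from Proposition~\ref{Zest}, and the fixed-time $Z$-norm estimate of Lemma~\ref{fixedtimeest}. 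On the complementary thin region --- the entire $a=2$ channel, and the slab around $\slashed\xi=0$ when $a=1$ --- no normal form is available; there I would combine the null-structure bound on the symbol (improved, on the $b$-localized piece, to $|q^a_{\mu,+}|\lesssim\min\{2^l,2^{(l+b)/2}2^{O(\max\{\tilde c,-c\})}\}$ via \eqref{june21eqn51} and \eqref{addsymbolest}) with the small volume of the $(\xi,\eta)$-support --- the $\xi$-support lies in the slab $|\slashed \xi|\lesssim 2^{k+b}$, of measure $\lesssim 2^{3k+2b}$, and the $\eta$-support has measure $\lesssim 2^{k_1+2k+3l/2}$ by \eqref{2023jan30eqn1} --- with the decay estimate \eqref{decayestimatefinal} and the $L^2$--$L^\infty$ bilinear estimate, and, in the worst scenario, with repeated integration by parts in $\eta$ against $|\nabla_\eta\Phi^a_{\mu,+}|\gtrsim 2^{\max\{l,b\}}$ (as in the worst-scenario analysis \eqref{july21eqn27}--\eqref{july21eqn32} of Lemma~\ref{highhighWfixed2}); the sums over $b$ and $l$ cost only logarithmic factors.

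The hard part will be precisely this last region, near the full space--time resonance --- in particular the whole $a=2$ channel --- where there is essentially no oscillation in $\eta$ or in $t$, so the bilinear form has to be handled purely through the null structure of $q^a_{\mu,+}$ together with the smallness of the volume of the $(\xi,\eta)$-support concentrated around the $x_1$-axis. This is exactly the point at which the thresholds $-m/2+\delta m$ of the $l$- and $b$-decompositions, and the $1+\alpha$ weight propagated in the $Z$-norm \eqref{weightednormspace}, are used critically; it is the Fourier-side incarnation of the $1D$ null-condition mechanism underlying the global existence of the asymptotic system \eqref{asympto}. Putting the pieces together yields \eqref{july21eqn45}, and \eqref{july21eqn46} follows in the same way.
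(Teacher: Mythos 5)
Your outer reductions do track the paper: the space localization of the inputs, the dyadic decomposition in the size $2^l$ of $l_{\mu,\nu}$, ruling out $j\geq \max\{m,-k_{-}\}+\delta(m+j)$ and the extreme frequency ranges, a normal form where the phase is comparable to the high frequency, and an extra angular localization $2^b$ near the axis are all present in the paper's argument (there organized through the partition \eqref{feb2eqn1}), and your identification of where the space--time resonance sits, together with the Taylor expansion of the phases, is essentially correct. (Minor point: in your orientation the easy-case thresholds should involve the \emph{small input} frequency $k_1$ rather than $k_2\sim k$, since the normal-form divisors and the error terms $O(2^{2k_1-k})$ in the phase expansion require $2^{k_1}$ not too small.)

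The genuine gap is exactly at the point you flag as ``the hard part''. You propose to treat the resonant region --- in your orientation the whole $a=2$ channel and the small-$|\slashed \xi|$ part of the $a=1$ channel --- ``purely through the null structure of $q^a_{\mu,+}$ together with the smallness of the volume of the support''. This does not close: at the threshold $l=-m/2+\delta m$ with $k,k_1\sim 0$ and $b\sim -m/4$, the crude budget (time integral $2^m$, output weight $2^{(1+\alpha)j}$ with $j$ as large as $\sim m$, symbol gain $2^{l}$, $\eta$-volume $\lesssim 2^{3k_1+l}$, $\xi$-slab measure $2^{(3k+2b)/2}$) still leaves a positive power of $2^m$, and the $L^2$--$L^\infty$ variant fails when $\max\{j_1,j_2\}$ is small relative to $j$. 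The paper closes this region with three ingredients that are absent from your plan: (i) on the resonant piece one has $|\nabla_\xi\Phi^a|\lesssim 2^{l/2}$ (\eqref{july22eqn21}, \eqref{2023jan29eqn31}), which is used both to cap the output localization at $j\leq m+l/2+\delta m$ and to run the $\nabla_\xi^\gamma$-moment/integration-by-parts-in-$\eta$ argument \eqref{july22eqn60}--\eqref{july22eqn81}; the worst-scenario scheme \eqref{july21eqn27}--\eqref{july21eqn32} you cite does not transplant without this smallness of $\nabla_\xi\Phi$, since there the $\big(it\nabla_\xi\Phi\big)^{\gamma_1}$ factors are only $O(t^{|\gamma_1|})$; (ii) at the threshold $l$, after removing $b\lesssim -m/4-2\alpha m$ by volume counting, the fine expansion \eqref{2023jan29eqn61} yields a phase lower bound $\gtrsim 2^{-m/2-10\alpha m}$ in \emph{both} output channels, so a normal form is in fact available there --- your claim that the resonant channel carries ``essentially no oscillation in $t$'' is not accurate: once the space-resonance relation \eqref{2023jan30eqn51} is imposed, $\Phi^2_{\mu,+}$ is comparable to $|\xi_1-\eta_1|\,(c_1^4\eta_2^2+c_2^4\eta_3^2)/(2|\eta_1|^2)$ up to $O(2^{2k_1-k})$, i.e.\ bounded below by $\sim 2^{2b+k_1}$ away from the degenerate angles; and (iii) in the non-threshold case with $\max\{j_1,j_2\}\geq j+l/(1+2\alpha)$ and the spatially far-out input carrying the low frequency, the plain $L^2$--$L^\infty$ estimate is insufficient, and the paper invokes the super-localized thin-annulus decay estimate of Lemma \ref{lineardecay} after splitting the high-frequency input into annuli of width comparable to the low frequency (see \eqref{july22eqn23}). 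Without (i)--(iii) the region you single out remains unestimated, so the proposal as written does not prove the lemma.
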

\begin{proof}

Since after changing coordinates $(\xi,\eta)\rightarrow (\xi,\xi-\eta),$   the desired estimates \eqref{july21eqn45} and \eqref{july21eqn46} can be proved similarly, we only focus on the proof of the estimate \eqref{july21eqn46}.  Recall \eqref{systemeqnprof}. Same as what we did in \eqref{july13eqn70}, we also do space localization for two inputs. As in the proof of Lemma \ref{fixedtimeest}, we first rule out the case   $j\geq  \max\{m, -k_{-}\}+ \delta (m+j) .$ It would be sufficient to consider the case  $j\leq  \max\{m, -k_{-}\}+ \delta (m+j) .$ 

 As   in  \eqref{july21eqn71}, we do dyadic decomposition for  $\mathcal{J}_{k;k_1,k_2}^{a;\mu,\nu;j_1, j_2}(t, \xi) $.  
  Note that the obtained estimates \eqref{july21eqn81} and \eqref{july21eqn84} are still valid. Moreover, similar to what we did in the proof of Lemma \ref{highhighWfixed2},  from the $L^2-L^\infty$ type bilinear estimate, the estimate \eqref{decayestimatefinal}, and the volume of support, we can rule out the case $k\leq -m/4-2\alpha m $ and the case $k_2\leq -2m/3-\alpha m .$ 

To sum up, it would be sufficient to  consider $k\geq  -m/4-2\alpha m$ and  $k_2\geq -2m/3-\alpha m$ for  the non-threshold  case $l\in (-m/2+\delta m, \tilde{c}+10]\cap \Z$, $\max\{j_1,j_2\}\geq (1-\delta)m + l$, the  threshold case $l=-m/2+\delta m$.   

 We first consider the main  case $l\in [-m/2+\delta m, -100 -  5\max\{\tilde{c}, -c\}]$  .  The case  $l\in [-100 -  5\max\{\tilde{c}, -c\}, \tilde{c}+10]\cap \Z$   can be handled in the same way as one sub-case of the main case. We will elaborate this point later. 

 To see different behavior of phases,  we use the following partition of unity,
\be\label{feb2eqn1}
\begin{split}
1 & = \phi_{\nu,1}(\xi, \eta) +  \phi_{\nu,2}(\xi, \eta)+ \phi_{\nu,3}(\xi, \eta),\\
 \phi_{\nu,1}(\xi, \eta) &= \psi_{\geq -10}(1-\nu \xi_1 \eta_1/(|\xi||\eta|))  \psi_{[-10,10]}(|\slashed \xi-\slashed \eta|\Lambda_2(\eta)/(|\xi-\eta|\sqrt{c_1^2\eta_2^4 +c_2^4 \eta_3^2})),\\ 
  \phi_{\nu,2}(\xi,\eta)& = \psi_{< -10}(1-\nu  \xi_1 \eta_1/(|\xi||\eta|) )  \psi_{[-10,10]}(|\slashed \xi-\slashed \eta|\Lambda_2(\eta)/(|\xi-\eta|\sqrt{c_1^2\eta_2^4 +c_2^4 \eta_3^2})), \\ 
   \phi_{\nu,3}(\xi, \eta)&= 1- \psi_{[-10,10]}(|\slashed \xi-\slashed \eta|\Lambda_2(\eta)/(|\xi-\eta|\sqrt{c_1^2\eta_2^4 +c_2^4 \eta_3^2})).
  \end{split}
\ee

Correspondingly, we  split  $\mathcal{J}_{k;k_1,k_2;l}^{a;+,\nu;j_1, j_2}(t, \xi)$ into three pieces as follows, 
\be
\begin{split}
\mathcal{J}_{k;k_1,k_2;l}^{a;+,\nu;j_1, j_2}(t, \xi)& =\sum_{b\in\{1,2,3\}} \mathcal{J}_{k;k_1,k_2;l;b}^{a;+,\nu;j_1, j_2}(t, \xi)\\ 
\forall b\in\{1,2,3\}, \quad \mathcal{J}_{k;k_1,k_2;l;b}^{a;+,\nu;j_1, j_2}(t, \xi)&:=\int_{\R^3} e^{i t \Phi_{+, \nu}^a ( \xi ,\eta)} q^a_{+, \nu} (\xi-\eta, \eta) \widehat{h_{1;j_1,k_1}^{ }}(t, \xi-\eta) \widehat{h_{2;j_2,k_2}^{\nu}}(t, \eta)   \\
&\quad \times \psi_k(\xi) \varphi_{l;-m/2+\delta m}( l_{+,\nu}(\xi, \eta))    \phi_{\nu,b}(\xi, \eta) d\eta\\
\end{split}
\ee

$\bullet$\qquad If $b=1.$

Note that, for any $(\xi, \eta)\in supp\big( \psi_{k_1}(\xi-\eta)\psi_{k_2}(\eta)  \varphi_{l;-m/2+\delta m}( l_{\mu,\nu}(\xi, \eta)) \phi_{\nu,1}(\eta)    \big)$, we have
\[
\big|  \Phi_{  +, \nu}^a ( \xi ,\xi-\eta)\big|\sim |\eta|\sim 2^{k_2}. 
\]

Recall that $k_2\geq -m/4-2\alpha m $. To take advantage of high oscillation in time, as in \eqref{july21eqn1}, we do integration by parts in time once. As a result, similar to the obtained estimate \eqref{july23eqn23}, for the threshold case $l=-m/2+\delta m,$ we have
\be 
\begin{split}
   \sum_{\begin{subarray}{c}  
   (k_1,k_2)\in \chi_k^3\\ 
 k_2\geq -m/4-2\alpha m\\
    \end{subarray} } 2^{(1+\alpha)j} &\big\| \sum_{ \begin{subarray}{c}
j_i\in [-k_{i,-}, \infty)\cap \Z, i=1,2\\
\end{subarray} }  P_k\big[ \varphi_{j,k}(\cdot)\mathcal{F}^{-1}\big[    \int_{t_1}^{t_2} \mathcal{J}_{k;k_1,k_2; -m/2+\delta m ;1}^{a;+,\nu;j_1, j_2}(t, \xi) d t  \big]\big](x)\big\|_{L^2}\\
& \lesssim      \sum_{\begin{subarray}{c}  
   (k_1,k_2)\in \chi_k^3\\ 
 k_2\geq -m/4-2\alpha m\\
    \end{subarray} }     2^{-k_2+ l + (1+\alpha)j+2\alpha m }  2^{-m+l/2+  k_1-4k_{1,+} } \epsilon_1^2\lesssim \epsilon_1^2.
\end{split}
\ee
Similar to the obtained estimate  \eqref{july21eqn3}, for the non-threshold case $l\in (-m/2+\delta m,  \tilde{c}+10]\cap \Z$, we have
\[
    \sum_{\begin{subarray}{c}  
   (k_1,k_2)\in \chi_k^3\\ 
 k_2\geq -m/4-2\alpha m\\
    \end{subarray} }  \sum_{ \begin{subarray}{c}
j_i\in [-k_{i,-}, \infty)\cap \Z, i=1,2\\
 \max\{j_1,j_2\}\geq   m+ l-\delta m \\
\end{subarray} } 2^{(1+\alpha)j}\|P_k\big[ \varphi_{j,k}(\cdot)\mathcal{F}^{-1}\big[    \int_{t_1}^{t_2} \mathcal{J}_{k;k_1,k_2;l;1}^{a;+,\nu;j_1, j_2}(t, \xi) d t \big]\big](x)\|_{L^2}
\]
\[
\lesssim    \sum_{\begin{subarray}{c}  
   (k_1,k_2)\in \chi_k^3\\ 
 k_2\geq -m/4-2\alpha m\\
    \end{subarray} }    \sum_{ \begin{subarray}{c}
j_i\in [-k_{i,-}, \infty)\cap \Z, i=1,2\\
 \max\{j_1,j_2\}\geq   m+ l-\delta m \\
\end{subarray} }   2^{ m/4+ (\alpha+30\delta)m} 2^{-k_2+  l} 2^{(1+\alpha)j -(1+\alpha)  \max\{j_1,j_2\}  } 2^{-m -4k_{2,+}} \epsilon_1^2 \lesssim  \epsilon_1^2. 
\]

$\bullet$\qquad If $b=2.$

For this case, we have $|\xi_1|-|\xi_1-\eta_1|-\nu|\eta_1|=0.$
Recall \eqref{july23eqn1}. Note that, for any $(\xi, \eta)\in supp(  \varphi_{l;-m/2+\delta m}( l_{\mu,\nu}(\xi, \eta) )\phi_{\nu,2}(\eta)  \psi_k(\xi)\psi_{k_2}(\eta))$, we have 
\be\label{july22eqn21}
\begin{split}
\big|\nabla_\xi \big( \Phi_{  +, \nu}^a ( \xi ,\eta)\big)\big|  =   \big| \nabla_\xi \Lambda_a(\xi) - \frac{\xi-\eta}{|\xi-\eta|}\big| \lesssim 2^{l/2}, \\
\end{split}
\ee 
Moreover, if we change coordinates $(\xi, \eta)\longrightarrow(\xi, \xi-\eta)$, for any $(\xi, \eta)\in supp(  \varphi_{l;-m/2+\delta m}( l_{\mu,\nu}(\xi, \xi-\eta) )\phi_{\nu,2}(\xi-\eta)  \psi_k(\xi)\psi_{k_2}(\xi-\eta))$, we still have 
\be\label{2023jan29eqn31}
\begin{split}
\big|\nabla_\xi \big( \Phi_{  +, \nu}^a ( \xi ,\xi-\eta)\big) \big|  =   \big| \nabla_\xi \Lambda_a(\xi) - \nu  \frac{\xi-\eta}{\Lambda_2(\xi-\eta) }\big| \lesssim 2^{l/2}. \\ 
\end{split}
\ee

Thanks to the above improved estimates, we can improve the upper bound of $j$. As in the proof of Lemma \ref{fixedtimeest}, by doing integration by parts in ``$\xi$'' $\delta^{-2}$ times, we rule out further the case   $j\geq  \max\{m, -k_{-}\}+ l/2 + \delta m .$  It would be sufficient to consider the case $j\leq   \max\{m, -k_{-}\}+ l/2 + \delta m .$

$\dagger$ The threshold case, i.e., $l=-m/2+\delta m$.

Similar to what we did in \eqref{2023jan28eqn5}, we localize further based on the size of $\slashed \eta/|\eta|$   as follows,
  \be\label{2023jan29eqn40}
  \begin{split}
\mathcal{J}_{k;k_1,k_2;l;2}^{a;+,\nu;j_1, j_2}(t, \xi) &= \sum_{b\in[-m/2+\delta m,l/2+2]\cap\Z} \mathcal{J}_{k;k_1,k_2;l;2;b}^{a;+,\nu;j_1, j_2}(t, \xi)\\
\mathcal{J}_{k;k_1,k_2;l;2;b}^{a;+,\nu;j_1, j_2}(t, \xi) &:=\int_{\R^3} e^{i t \Phi_{+, \nu}^a ( \xi ,\eta)} q^a_{+, \nu} (\xi-\eta, \eta) \widehat{h_{1;j_1,k_1}^{ }}(t, \xi-\eta) \widehat{h_{2;j_2,k_2}^{\nu}}(t, \eta) \psi_k(\xi)    \\ 
 & \quad \times    \varphi_{l;-m/2+\delta m}( l_{+,\nu}(\xi, \eta))   \varphi_{ b;-m/2+\delta m}( |\slashed\eta|/|\eta|)  \phi_{\nu,2}(\xi, \eta)   d\eta. 
\end{split}
\ee

From the volume of support of $\xi,\eta$,  the estimate of symbol in  \eqref{july13eqn11}, and  the estimate of the volume of support of frequency variable in  \eqref{2023jan30eqn71}, we have 
\[
  \sum_{(k_1,k_2)\in \chi_k^3,  k_2  \leq   k- 100-5\max\{\tilde{c}, -c\}}   2^{(1+\alpha)j}\big\| \sum_{ \begin{subarray}{c}
j_i\in [-k_{i,-}, \infty)\cap \Z, i=1,2\\
\end{subarray} }  P_k\big[ \varphi_{j,k}(\cdot)\mathcal{F}^{-1}\big[  \mathcal{J}_{k;k_1,k_2;l;2;b}^{a;+,\nu;j_1, j_2}(t, \xi)\big]\big](x)\big\|_{L^2}
\]
\be\label{july13eqn65}
\lesssim     \sum_{(k_1,k_2)\in \chi_k^3,   k_2  \leq   k- 100-5\max\{\tilde{c}, -c\} }    2^{2b + (1+\alpha)j} 2^{ 3k/2+l/2}2^{k_2+2k +2l } 2^{-5(k_{1,+}+k_{2,+})}\epsilon_1^2.
\ee
The above estimate is sufficient to  rule out the case  $b\leq -m/4-2\alpha m $ or $k_1\notin [ -2\alpha m, 2\alpha m ]$.

 It remains to consider the case $b\in [-m/4-2\alpha m , -m/4+\delta m]\cap \Z $ and $k_1\in [ -2\alpha m, 2\alpha m ]\cap \Z . $   Note that, from Taylor expansion and the   approximations in \eqref{july23eqn1} and \eqref{2023jan30eqn51}, the following approximation holds for the resonance case, 
\be\label{2023jan29eqn61}
\begin{split} 
\Phi_{\mu, \nu }^1(\xi, \eta) & = \frac{(1-c_1^2)\eta_2^2}{2|\xi_1|}(1 + \frac{\mu \nu c_1^2 |\xi_1-\eta_1|}{|\eta_1|})+ \frac{(1-c_2^2)\eta_3^2}{2|\xi_1|}(1 + \frac{ \mu \nu  c_2^2 |\xi_1-\eta_1|}{|\eta_1|}) + \mathcal{O}(2^{-3m/4+10\alpha m }) \\ 
& =(1 + \frac{ \mu \nu  c_1^2 |\xi_1-\eta_1|}{|\eta_1|})^{-1}\frac{(1-c_1^2)\xi_2^2 }{2|\xi_1|} + (1 + \frac{ \mu \nu  c_2^2 |\xi_1-\eta_1|}{|\eta_1|})^{-1}\frac{(1-c_2^2)\xi_3^2}{2|\xi_1|} + \mathcal{O}(2^{-3m/4+10\alpha m }),\\ 
\Phi_{\mu, \nu }^2(\xi, \eta)& =   \frac{-\mu \nu |\xi_1-\eta_1| }{2|\eta_1||\xi_1| }\big[(1-c_1^2)\eta_2^2 (1 + \frac{\mu \nu c_1^2 |\xi_1-\eta_1|}{|\eta_1|}) + (1-c_2^2) \eta_3^2 (1 + \frac{ \mu \nu c_2^2  |\xi_1-\eta_1|}{|\eta_1|})  \big] 
\\
&+ \mathcal{O}(2^{-3m/4+10\alpha m })=\mathcal{O}(2^{-3m/4+10\alpha m }) \\
& +   \frac{-\mu \nu |\xi_1-\eta_1| }{2|\eta_1||\xi_1| }\big[(1-c_1^2)\xi_2^2 (1 + \frac{\mu \nu c_1^2 |\xi_1-\eta_1|}{|\eta_1|})^{-1} + (1-c_2^2) \xi_3^2 (1 + \frac{ \mu \nu c_2^2  |\xi_1-\eta_1|}{|\eta_1|})^{-1}  \big] ,\\
\end{split}
\ee
  We remark that, the above expansion only depends on the fact that  we have $|\xi|-\mu |\xi_1-\eta_1|-\nu |\eta_1|=0$ in the resonance case and the fact that $l< -m/2+\alpha m, k\leq 2\alpha m .$

From  \eqref{2023jan29eqn61},  $\forall a\in\{1,2\}$,  we have a lower bound for the size of phases for the case we are considering, 
\[
 |\Phi_{+, \nu }^a(\xi, \eta)| \varphi_{l;-m/2+\delta m}( l_{+,+}(\xi, \eta))   \varphi_{ b;-m/2+\delta m}( |\slashed\eta|/|\eta|)\gtrsim 2^{2b+2(k-k_2)}\gtrsim 2^{-m/2-10\alpha m }. 
\]
To utilize high oscillation in time, we do integration by parts in time once. As a result, from the volume of support of $\xi, \eta$, the estimate \eqref{2023jan30eqn71}, and the estimate \eqref{july23eqn91} in Proposition \ref{Zest}, we have
\[
  \sum_{(k_1,k_2)\in \chi_k^3, k_2  \leq   k- 100-5\max\{\tilde{c}, -c\} }   2^{(1+\alpha)j}\big\| \sum_{ \begin{subarray}{c}
j_i\in [-k_{i,-}, \infty)\cap \Z, i=1,2\\
\end{subarray} }  P_k\big[ \varphi_{j,k}(\cdot)\mathcal{F}^{-1}\big[ \int_{t_1}^{t_2}  \mathcal{J}_{k;k_1,k_2;l;2;b}^{a;+,\nu;j_1, j_2}(t, \xi) d t \big]\big](x)\big\|_{L^2}
\]
\be\label{2023jan29eqn65}
\lesssim    \sum_{(k_1,k_2)\in \chi_k^3,  k_2  \leq   k- 100-5\max\{\tilde{c}, -c\}} 2^{ 10\alpha m + (1+\alpha)j} 2^{ 3k/2+l/2}2^{k_2+2k +2l } 2^{-5(k_{1,+}+k_{2,+})}\epsilon_1^2\lesssim \epsilon_1^2.
\ee
Hence finishing the proof of the threshold case.

$\dagger$ The non-threshold case, i.e., $l\in  (-m/2+\delta m, -100 -  5\max\{\tilde{c}, -c\}]\cap \Z $.

  Based on the possible size of $\max\{j_1,j_2\}$, we split into two cases as follows. 

$\oplus$\quad If $\max\{j_1,j_2\}\geq j+ l/(1+2\alpha).$

If $j_1=\max\{j_1,j_2\}$, then we put $ e^{i t\Lambda_2}  h_{2;j_2,k_2}^{\mu} $ in $L^\infty$ and put $  h_{1;j_1,k_1}^{\mu}$ in $L^2$. As a result, from the estimate  \eqref{july13eqn11},  the $L^2-L^\infty$ type bilinear estimate, we have
\be\label{july22eqn22}
\begin{split}
& \| \sum_{ \begin{subarray}{c}
j_i\in [-k_{i,-}, \infty)\cap \Z, i=1,2\\
 j_1=\max\{j_1,j_2\}\geq   j+ l/(1+2\alpha)\\
\end{subarray} }  \sum_{}P_k\big[ \varphi_{j,k}(\cdot)\mathcal{F}^{-1}\big[    \int_{t_1}^{t_2}  \mathcal{J}_{k;k_1,k_2;l;2}^{a;+,\nu;j_1, j_2}(t, \xi) d t \big]\big](x)\|_{L^2} \\ 
& \lesssim 2^{ l}2^{-m+ k_2-8k_{1,+}}2^{ -(1+\alpha) ( j+ l/(1+2\alpha))+ \delta m }\epsilon_1^2. 
\end{split}
\ee

If $j_2=\max\{j_1,j_2\}$,  from the estimate   in \eqref{july13eqn11}, the orthogonality in $L^2$, and the super-localized decay estimate (\ref{linearwavedecay}) in Lemma \ref{lineardecay}, the following estimate holds after super-localizing the frequency of the input $h_{1;j_1,k_1}^{ }$ with the step size $2^{k_2}$, 
\be\label{july22eqn23}
\begin{split}
&  \| \sum_{ \begin{subarray}{c}
j_i\in [-k_{i,-}, \infty)\cap \Z, i=1,2\\
 j_2=\max\{j_1,j_2\}\geq   j+ l/(1+2\alpha)\\
\end{subarray} } P_k\big[ \varphi_{j,k}(\cdot)\mathcal{F}^{-1}\big[    \int_{t_1}^{t_2}  \mathcal{J}_{k;k_1,k_2;l;2}^{a;+,\nu;j_1, j_2}(t, \xi) d t \big]\big](x)\|_{L^2} \\
& \lesssim\sum_{j_2\geq  j+ l/(1+2\alpha)} \big( \sum_{a \in [2^{k-k_2-10}, 2^{k-k_2+10}]\cap \Z} \|\int_{\R^3} e^{   i t \Phi_{\mu, \nu}^a ( \xi , \eta)} q^a_{\mu, \nu} ( \xi-\eta,\eta)\widehat{h_{1;\leq j_2,k_1}^{ }}(t,\xi- \eta) \\ 
& \times     \widehat{h_{2;j_2,k_2}^{\nu}}(t, \eta) \psi_k(\xi)  \varphi_{l;-m/2+\delta m}( l_{\mu,\nu}(\xi, \eta)) \psi_{k_1}(|\xi-\eta|-a 2^{k_2})d\eta\|_{L^2_\xi}^2\big)^{1/2}\\ 
& \lesssim \sum_{j_2\geq  j+ l/(1+2\alpha)}  2^{ l-4k_{+}}   2^{-m+ k_2/3}   \epsilon_1 \| h_{2 ;j_2,k_2}^{\mu}\|_{L^2}\lesssim  2^{ l  -4k_{+}}  2^{-m+ k_2/3}  2^{ -(1+\alpha) ( j+ l/(1+2\alpha))+ \delta m }\epsilon_1^2.\\
\end{split}
\ee
After combining the obtained estimates \eqref{july22eqn22} and \eqref{july22eqn23}, we have
 \be\label{july22eqn51}
 \begin{split}
  & \sum_{\begin{subarray}{c}
(k_1,k_2)\in \chi_k^3 \\   
k_2  \leq   k- 100-5\max\{\tilde{c}, -c\}
\end{subarray}} 2^{(1+\alpha)j}\| \sum_{ \begin{subarray}{c}
j_i\in [-k_{i,-}, \infty)\cap \Z, i=1,2\\
 \max\{j_1,j_2\}\geq   j+ l/(1+2\alpha)\\
\end{subarray} }  P_k\big[ \varphi_{j,k}(\cdot)\mathcal{F}^{-1}\big[    \int_{t_1}^{t_2}   \mathcal{J}_{k;k_1,k_2;l;2}^{a;+,\nu;j_1, j_2}(t, \xi)  d t \big]\big](x)\|_{L^2}\\ 
&\lesssim     \sum_{\begin{subarray}{c}
(k_1,k_2)\in \chi_k^3 \\   
k_2  \leq   k- 100-5\max\{\tilde{c}, -c\}
\end{subarray}}      2^{m+ l} 2^{(1+\alpha)j -(1+\alpha) ( j+ l/(1+2\alpha))+ \delta m } 2^{ -m+k_1/3-4k_{+}}  \epsilon_1^2\lesssim  2^{  \delta m +\delta l}\epsilon_1^2. \\
\end{split}
\ee

$\oplus$\quad If $\max\{j_1,j_2\}\leq j+ l/(1+2\alpha).$

For this case, we will use same strategy used in the proof of Lemma \ref{highhighWfixed2} for the   estimate of $\mathcal{J}_{k;k_1,k_2;3}^{\mu,\nu;j_1, j_2;l}(t, \xi)$, see \eqref{july20eqn4}. 

We first rule out some trivial cases. By using the same strategy used in obtaining the estimate \eqref{july22eqn51}, the following estimate holds if $k_2\leq - 3\alpha m$ or $l\leq -\alpha m $,
 \be\label{july22eqn53}
 \begin{split}
 &   \sum_{\begin{subarray}{c}
(k_1,k_2)\in \chi_k^3 \\   
k_2  \leq   k- 100-5\max\{\tilde{c}, -c\}
\end{subarray}}  2^{(1+\alpha)j}\|  \sum_{ \begin{subarray}{c}
j_i\in [-k_{i,-}, \infty)\cap \Z, i=1,2\\
 \max\{j_1,j_2\}\geq  (1-\delta)m +  l \\
\end{subarray} } P_k\big[ \varphi_{j,k}(\cdot)\mathcal{F}^{-1}\big[    \int_{t_1}^{t_2}    \mathcal{J}_{k;k_1,k_2;l;2}^{a;+,\nu;j_1, j_2}(t, \xi)  d t \big]\big](x)\|_{L^2}\\
&\lesssim      \sum_{(k_1,k_2)\in \chi_k^3, k_2  \leq   k- 100-5\max\{\tilde{c}, -c\}}     2^{m+ l} 2^{(1+\alpha)j -(1+\alpha) ((1-\delta)m + l)+ \delta m }
 2^{-m+k_2/3} \epsilon_1^2 \lesssim  2^{   -10\delta m }\epsilon_1^2. \\ 
 \end{split}
\ee

Now, we focus on the case $k_1\geq -3\alpha m $ and $l\geq -\alpha m .$  Similar to \eqref{july21eqn27}, for any $\gamma\in \Z_+^3$, we have 
\be\label{july22eqn60}
\begin{split}
 \nabla_\xi^\gamma   \mathcal{J}_{k;k_1,k_2;l;2}^{a;+,\nu;j_1, j_2}(t, \xi) &=\sum_{\gamma_1+\gamma_2=\gamma} \mathfrak{J}_{\gamma_1, \gamma_2}^\gamma(t, \xi) \\ 
 \mathfrak{J}_{\gamma_1, \gamma_2}^\gamma(t, \xi)  &=\int_{\R^3} e^{i t \Phi_{+, \nu}^a ( \xi ,\eta)} \big(i t \nabla_{\xi}\Phi_{+, \nu}^a ( \xi ,\eta)\big)^{\gamma_1} \nabla_\xi^{\gamma_2}\big[ q^a_{+, \nu} (\xi-\eta, \eta) \widehat{h_{1;j_1,k_1}^{ }}(t, \xi-\eta) \psi_k(\xi)  \\ 
 & \quad \times     \varphi_{l;-m/2+\delta m}( l_{+,\nu}(\xi, \eta))    \phi_{\nu,2}(\xi, \eta) \big] \widehat{h_{2;j_2,k_2}^{\nu}}(t, \eta)   d\eta. 
\end{split}
\ee

For $\widetilde{J}_{\gamma_1, \gamma_2}^\gamma(t, \xi) $, we do integration by parts in ``$\eta$'' $|\gamma_1|$ times. As a result, we have
\be\label{july22eqn61}
\begin{split}
\mathfrak{J}_{\gamma_1, \gamma_2}^\gamma(t, \xi)   &=\int_{\R^3} e^{i t \Phi_{+, \nu}^a ( \xi ,\eta)} \\ 
&\times  \nabla_\eta \cdot \Big[ \frac{i  \nabla_\eta \Phi_{+, \nu}^a ( \xi ,\eta) }{|\nabla_\eta \Phi_{+, \nu}^a ( \xi ,\eta)|^2} \circ\cdots \nabla_\eta \cdot \Big[ \frac{i  \nabla_\eta \Phi_{+, \nu}^a ( \xi ,\eta) }{|\nabla_\eta \Phi_{+, \nu}^a ( \xi ,\eta)|^2} \big(i \nabla_{\xi} \Phi_{+, \nu}^a ( \xi ,\eta)\big)^{\gamma_1} \nabla_\xi^{\gamma_2}\big[ q^a_{+, \nu} (\xi-\eta, \eta)  \\ 
 & \qquad \times  \widehat{h_{1;j_1,k_1}^{}}(t, \xi-\eta) \psi_k(\xi)   \varphi_{l;-m/2+\delta m}( l_{+,\nu}(\xi, \eta))   \phi_{\nu,2}(\xi, \eta) \big] \widehat{h_{2;j_2,k_2}^{\nu}}(t, \eta) \Big]\circ\cdots\Big]  d\eta. 
\end{split}
\ee

Recall the range of $k,j,j_1,k_1,$ and $l$. The worst scenario happens when $\nabla_\eta^\kappa \nabla_\xi^{\gamma_1}$ hits $ \widehat{h_{1;j_1,k_1}^{ }}(t,  \eta)$ if $j_1=\max\{j_1,j_2\}$ ($ \widehat{h_{2;j_2,k_2}^{\mu}}(t, \xi-\eta)$ if $j_2=\max\{j_1,j_2\}$), where $\gamma\in \Z_+^3,$ s.t., $|\kappa|=|\gamma_2|$.  Moreover, from the estimates \eqref{july22eqn21} and \eqref{july21eqn84}, for any $n\in \Z_+$, the following estimate holds for the kernel of typical symbols appeared in \eqref{july21eqn28}, 
\[
\begin{split}
  \|\mathcal{F}^{-1}_{(\xi, \eta)\rightarrow (x,y)}&\big[ \underbrace{ \frac{i   \nabla_\eta  \Phi_{+, \nu}^a  ( \xi ,\eta) }{|\nabla_\eta   \Phi_{+, \nu}^a ( \xi ,\eta)|^2} \otimes \circ\cdots \circ \otimes\frac{i  \nabla_\eta   \Phi_{+, \nu}^a  ( \xi ,\eta) }{|\nabla_\eta   \Phi_{+, \nu}^a  ( \xi ,\eta)|^2}}_{\text{$n$-times }}       \varphi_{l;-m/2+\delta m}( l_{\mu,\nu}(\xi, \eta)) \big(i \nabla_{\xi}  \Phi_{+, \nu}^a  ( \xi ,\eta)\big)^{\gamma_1} \\ 
&\times  \psi_{k_1}(\xi-\eta)\psi_{k_2}(\eta) \psi_k(\xi) q^a_{+, \nu} (\xi-\eta, \eta)
\big]\|_{L^1_{x,y}}  
    \lesssim 2^{-(6+\delta)l -  (n-1) l+|\gamma_1|l/2 }.
    \end{split}
\]

From the above estimate of   kernels,   by using the same strategy used in obtaining the estimate \eqref{july22eqn51}, the following estimate holds for any $\gamma\in \Z_+^3$,  
\[
\sum_{\begin{subarray}{c}
(k_1,k_2)\in \chi_k^3 \\   
k_2  \leq   k- 100-5\max\{\tilde{c}, -c\}
\end{subarray}}  2^{(1+ \alpha)j}\|\sum_{ \begin{subarray}{c}
j_i\in [-k_{i,-}, \infty)\cap \Z, i=1,2\\
 \max\{j_1,j_2\}\leq    j+ l/(1+2\alpha)\\
\end{subarray} }P_k\big[ \varphi_{j,k}(\cdot)\mathcal{F}^{-1}\big[      \mathfrak{J}_{\gamma_1, \gamma_2}^\gamma(t, \xi) \big]\big](x)\|_{L^2}
\]
\[
\lesssim    \sum_{(k_1,k_2)\in \chi_k^3,  k_2  \leq   k- 100-5\max\{\tilde{c}, -c\}}  \sum_{ \begin{subarray}{c}
 \tilde{j}\leq     j+ l/(1+2\alpha) \\
\end{subarray} }    2^{-(6+\delta)l -  (n/2-1) l }  2^{  |\gamma|   \tilde{j}  }  2^{(1+\alpha)j -(1+\alpha) \tilde{j} + \delta m }
\]
\be\label{july22eqn81}
\times 2^{-m+k_2/3} 2^{-4k_{1,+}} \epsilon_1^2  \lesssim 2^{|\gamma|j -(6+\delta)l -  |\gamma| l/2 +  |\gamma|l/(1+2\alpha)} 2^{ -m+ \delta m}\epsilon_1^2     . 
\ee
 Recall \eqref{july22eqn60}. From the above estimate, we have
 \[
\sum_{\begin{subarray}{c}
(k_1,k_2)\in \chi_k^3 \\   
k_2  \leq   k- 100-5\max\{\tilde{c}, -c\}
\end{subarray}}  2^{(1+\alpha)j}\|  \sum_{ \begin{subarray}{c}
j_i\in [-k_{i,-}, \infty)\cap \Z, i=1,2\\
 \max\{j_1,j_2\}\leq    j+ l/(1+2\alpha)\\
\end{subarray} } P_k\big[ \varphi_{j,k}(\cdot)\mathcal{F}^{-1}\big[       \mathcal{J}_{k;k_1,k_2;l;2}^{a;+,\nu;j_1, j_2}(t, \xi)  \big]\big](x)\|_{L^2}
\]
 \[
\lesssim \sum_{ \begin{subarray}{c} \gamma, \gamma_1, \gamma_2\in \Z_+^3 \\ 
|\gamma|=20,
\gamma_1+\gamma_2=\gamma\\
\end{subarray}}  \sum_{\begin{subarray}{c}
(k_1,k_2)\in \chi_k^3 \\   
k_2  \leq   k- 100-5\max\{\tilde{c}, -c\}
\end{subarray}}  2^{(1+\alpha)j-|\gamma|j}\|  \sum_{ \begin{subarray}{c}
j_i\in [-k_{i,-}, \infty)\cap \Z, i=1,2\\
 \max\{j_1,j_2\}\leq    j+ l/(1+2\alpha)\\
\end{subarray} }  P_k\big[ \varphi_{j,k}(\cdot)\mathcal{F}^{-1}\big[     \mathfrak{J}_{\gamma_1, \gamma_2}^\gamma(t, \xi) \big]\big](x)\|_{L^2}
\]
\be 
\lesssim 2^{  - (6+\delta) l    + 10(1-4\alpha) l/(2+4\alpha)} 2^{ -m+ \delta m}\epsilon_1^2  \lesssim 2^{ -m+ \delta m+ \delta l}\epsilon_1^2     .
\ee

$\bullet$\qquad If $b=3$ or $l\in[ -100 -  5\max\{\tilde{c}, -c\}, \tilde{c}+10 ]\cap \Z $.

Recall the definition of the cutoff function $   \phi_{\nu,3}(\xi, \eta)$ in \eqref{feb2eqn1}. Since now  $(\xi_2-\eta_2, \xi_3-\eta_3)/|\xi-\eta|$ and $(c_1^2 \eta_2, c_2^2 \eta_3)/\Lambda_2(\eta)$ are not comparable, an improved bound as in \eqref{july21eqn23} holds. Thanks to the improved estimate,  with minor modifications in the above argument and the argument used for the estimate of $\mathcal{J}_{k;k_1,k_2;l;3}^{a;\mu,\nu;j_1, j_2}(t, \xi)$ in the proof of Lemma \ref{highhighWfixed2}, the case $b=3$ and the case $l\in[ -100 -  5\max\{\tilde{c}, -c\}, \tilde{c}+10 ]\cap \Z $ can be handled similarly. Hence finishing the proof.
\end{proof}
 
\subsection{The frequencies-comparable case}\label{frecompcase}
In this subsection, we consider the case that    the input frequencies and the  out frequency are all comparable, e.g., $\min\{k_1, k_2\}\geq k-100-5\max\{\tilde{c},-c\}$ and $k\geq \max\{k_1,k_2\}-100-5\max\{\tilde{c},-c\},$ where absolute constants  $\tilde{c}$ and $c$ are defined in \eqref{2023jan28eqn1}. 
 
 We single out this case because, unlike previous two subsections, the separation between the resonance case and the non-resonance case  is  not entirely determined by the sign of $\mu, \nu$. Instead, we will utilize a new partition of unity functions, see \eqref{july7eqn51}. Moreover, as one might notice when comparing the obtained estimate \eqref{2023jan30eqn71} in the non-comparable case and the obtained estimate  \eqref{2023jan30eqn91} in the comparable case,  there is also a difference. These estimates play roles in  the threshold case, i.e., $l=-m/2+\delta m$,   for which we will elaborate here.

\begin{lemma}\label{comparableest}
 Under the bootstrap assumption \eqref{bootstrap}, for any $t \in [2^{m-1}, 2^m]\subset[0, T], m\in \Z_+$, we have 
 \be\label{july7eqn4}
\sup_{k\in \Z}\sum_{ \begin{subarray}{c} 
 a=1,2\\ 
\mu, \nu\in\{+,-\}
\end{subarray}} \sum_{ \begin{subarray}{c}
 (k_1,k_2)\in \cup_{i=1,2,3}\chi_k^i \\ 
 \max\{k_1,k_2\}-100-5\max\{\tilde{c}, -c\}\leq k \\ 
 k\leq \min\{k_1,k_2\}+100+5\max\{\tilde{c}, -c\}\\
 \end{subarray}} \|\int_{t_1}^{t_2} \mathcal{F}^{-1}\big[   \mathcal{J}_{k;k_1,k_2}^{a;\mu,\nu }(t, \xi) \big]  d t  \|_{Z} \lesssim 2^{  \delta m }\epsilon_1^2.
\ee
\end{lemma}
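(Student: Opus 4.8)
The plan is to follow the same skeleton used in Lemmas \ref{highhighWfixed}--\ref{lowhighess}: perform the standard physical-space localization $h_a = \sum_{j} h_{a;j,k}$ as in \eqref{july13eqn70}, dispose of the case $j \geq \max\{m,-k_-\} + \delta(m+j)$ by the integration-by-parts-in-$\xi$ argument that gave \eqref{july13eqn72} and \eqref{july13eqn52} (these are insensitive to the frequency configuration), and then dyadically decompose in the size $2^l$ of $l_{\mu,\nu}(\xi,\eta)$ from \eqref{july23eqn1} with threshold $-m/2+\delta m$, exactly as in \eqref{july21eqn71}. Since $\max\{j_1,j_2\}\leq(1-\delta)m+l$ is always removed by integration by parts in $\eta$ using the lower bound \eqref{july21eqn84} (or \eqref{july21eqn81} in the range $l \geq -100-5\max\{\tilde c,-c\}$), the work reduces to the threshold case $l=-m/2+\delta m$ and the non-threshold case with $\max\{j_1,j_2\}\geq(1-\delta)m+l$. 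First I would handle the easy small-frequency range $k\leq -m/4-2\alpha m$ (equivalently all of $k,k_1,k_2$ small, since they are comparable here) by the $L^2$--$L^\infty$ bilinear estimate together with the volume bound: in the threshold case one uses the symbol bound \eqref{july13eqn11} and the \emph{comparable-case} volume estimate \eqref{2023jan30eqn91} (giving $2^{3k+3l/2}$ rather than the generic $2^{\min\{k_1,k_2\}+2\max\{k_1,k_2\}+2l}$), which is the one structural novelty flagged in the subsection preamble; in the non-threshold case one copies \eqref{july13eqn16}.

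For the remaining range $k\geq -m/4-2\alpha m$ (so all frequencies are $\gtrsim 2^{-m/4-2\alpha m}$), the key point is that the sign of $\mu\nu$ no longer cleanly separates resonant from non-resonant behavior, because when $|\xi|\sim|\xi-\eta|\sim|\eta|$ the quantity $|\xi_1|-\mu|\xi_1-\eta_1|-\nu|\eta_1|$ can vanish for either sign of $\mu\nu$ (this is the "opposite sign in the leading term of \eqref{2023jan29eqn61}" mentioned in the introduction). So I would introduce a partition of unity — the new one the subtext promises around "\eqref{july7eqn51}" — of the schematic form $1 = \phi_1 + \phi_2 + \phi_3$ where $\phi_1$ localizes to $|\Phi^a_{\mu,\nu}(\xi,\eta)| \sim 2^k$ (here a single integration by parts in time / normal form transformation works, exactly as in \eqref{july21eqn1}--\eqref{july21eqn3}, using Proposition \ref{Zest} and Lemma \ref{fixedtimeest} to control the resulting boundary and $\partial_t$-terms), $\phi_2$ localizes to the genuinely resonant region $|\xi_1|-\mu|\xi_1-\eta_1|-\nu|\eta_1| = O(\text{small})$ where I would invoke the fine phase expansion \eqref{2023jan29eqn61} to get the improved lower bound $|\Phi^a_{\mu,\nu}| \gtrsim 2^{2b+2(k-k_2)}$ in terms of the extra localization $2^b$ of $|\slashed\eta|/|\eta|$, and $\phi_3$ localizes to the region where $\nabla_\eta\Phi^a_{\mu,\nu}$ is large so that repeated integration by parts in $\eta$ applies. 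Within $\phi_2$ I would further split into $\max\{j_1,j_2\} \gtrless j + (\text{const})l$, treating the "large weight" alternative by the $L^2$--$L^\infty$ estimate plus the super-localized decay estimate \eqref{linearwavedecay} (localizing the non-$L^2$ input in frequency annuli of width $2^{k_2}$, mimicking \eqref{july22eqn23}), and the "small weight" alternative by the $|\gamma|$-fold $\nabla_\xi$ plus integration-by-parts-in-$\eta$ machinery of \eqref{july21eqn27}--\eqref{july21eqn32}, which trades powers of $2^j$ against powers of the phase lower bound.

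The main obstacle I anticipate is precisely the resonant piece $\phi_2$ in the frequency-comparable regime: the cancellation in \eqref{2023jan29eqn61} (opposite signs of the two coefficients, or between $\xi_2$ and $\xi_3$ contributions) means the naive lower bound on $|\Phi^a_{\mu,\nu}|$ degenerates, and the time-resonant set is not a single point but can be a codimension-one subset. The remark in the introduction that "this happens only in the frequency comparable case" and "the frequency is localized inside a very thin annulus instead of a normal annulus" suggests that the correct fix is an additional dyadic localization of the radial variable $\bigl| |\eta_1| - c_1^2|\xi_1-\eta_1| \bigr| \sim 2^{b'}$ (as foreshadowed in the derivation of \eqref{2023jan30eqn91}), for which the volume of the frequency support collapses to roughly $\min\{2^{b'}(2^{-b'+2k+l})^2, 2^{b'+2k+l}\}$; carrying that extra parameter through the normal-form transformation and the $L^2$-orthogonality arguments, while keeping the $2^{(1+\alpha)j}$ weight of the $Z$-norm under control (recall $1+\alpha$ is critical here), is the delicate bookkeeping that the proof must execute. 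Once the $\phi_1$ and $\phi_3$ pieces are disposed of by their respective robust mechanisms and the thin-annulus localization tames $\phi_2$, summing over $l$, $b$ (logarithmic losses, absorbed by the $2^{\delta m}$ on the right side), and over the finitely-many-in-$\tilde c,-c$ frequency triples yields \eqref{july7eqn4}, completing the proof of Proposition \ref{West} via the assembly already recorded in its proof.
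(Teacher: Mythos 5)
Your skeleton (space localization, removal of large $j$, dyadic decomposition in $l_{\mu,\nu}$, removal of small $k$, and a resonant/non-resonant splitting with a normal form on the non-resonant piece) matches the paper's, and you correctly identify where the real difficulty sits: the resonant piece at the threshold $l=-m/2+\delta m$, where the coefficients $1+\mu\nu c_i^2|\xi_1-\eta_1|/|\eta_1|$ in the expansion \eqref{2023jan29eqn61} can degenerate when $\mu\nu=-$ and the frequencies are comparable, so the lower bound $|\Phi^a_{\mu,\nu}|\gtrsim 2^{2b+2(k-k_2)}$ you import from the Low$\times$High case is exactly what fails here. But at that point your proposal stops: you declare that an extra dyadic localization of $\bigl||\eta_1|-c_1^2|\xi_1-\eta_1|\bigr|$ and ``delicate bookkeeping'' should work, without producing the estimates. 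That is the content of the lemma — it is why the paper isolates this case in its own subsection — so as written there is a genuine gap, not merely an omitted routine computation.

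Concretely, the paper's mechanism at this step is different from the one you gesture at. First, the resonant/non-resonant split is the exact sign-based partition \eqref{july7eqn51} (in terms of the signs of $\xi_1,\eta_1,\xi_1-\eta_1$ against $\mu,\nu$), so that on the resonant piece one has identically $|\xi_1|-\mu|\xi_1-\eta_1|-\nu|\eta_1|=0$, which is what legitimizes \eqref{2023jan29eqn61}, and on the complementary piece $|\Phi^a_{\mu,\nu}|\sim 2^k$ cleanly; your $\phi_1/\phi_2/\phi_3$ partition modeled on \eqref{feb2eqn1} does not give this dichotomy. Second, on the resonant piece at threshold the paper does not localize $\bigl||\eta_1|-c_1^2|\xi_1-\eta_1|\bigr|$; instead it localizes the output transverse frequencies $|\xi_2|\sim 2^{b_1+k}$, $|\xi_3|\sim 2^{b_2+k}$ as in \eqref{2023jan30eqn100}, removes $b_1$ or $b_2\leq -m/4-2\alpha m$ by volume, and then uses \eqref{2023jan30eqn51} to convert the surviving lower bounds on $|\xi_2|,|\xi_3|$ into the coefficient lower bounds \eqref{2023jan30eqn110}, i.e.\ the degeneracy you worried about is eliminated rather than tracked. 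This yields the volume bound \eqref{2023jan31eqn1}, and the remaining loss is recovered by a further splitting \eqref{2023jan30eqn101} according to whether $|\Phi^a_{\mu,\nu}|\lessgtr 2^{-3m/4+10\alpha m}$: for the small-phase part one uses the very thin localization of $\xi_3$ recorded in \eqref{2023jan31eqn2} (this is the ``thin annulus'' phenomenon), and for the large-phase part one integrates by parts in time and controls the $\partial_t$-terms with Proposition \ref{Zest}. None of these steps appear in your write-up, and your proposed localization alone does not obviously supply the $2^{(1+\alpha)j}$-weighted $L^2$ bound with $j$ as large as $m+l/2+\delta m$; you would need to either reproduce the paper's chain \eqref{2023jan30eqn110}--\eqref{2023jan31eqn2} or supply a genuine substitute for it.
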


\begin{proof} Recall \eqref{systemeqnprof}. Same as what we did in \eqref{july13eqn70}, we also do space localization for two inputs. As in the proof of Lemma \ref{fixedtimeest}, we first rule out the case   $j\geq  \max\{m, -k_{-}\}+ \delta (m+j) .$ It would be sufficient to consider the case  $j\leq  \max\{m, -k_{-}\}+ \delta (m+j) .$ 

 As   in  \eqref{july21eqn71}, we do dyadic decomposition for  $\mathcal{J}_{k;k_1,k_2}^{a;\mu,\nu;j_1, j_2}(t, \xi) $.  
  Note that the obtained estimates \eqref{july21eqn81} and \eqref{july21eqn84} are still valid. Moreover, similar to what we did in the proof of Lemma \ref{highhighWfixed2},  from the $L^2-L^\infty$ type bilinear estimate, the estimate \eqref{decayestimatefinal}, and the volume of support, we can rule out the case $k\leq -m/4-2\alpha m . $  

If $l\in [-100-5\max\{\tilde{c},-c\}, \tilde{c}+10]\cap \Z$, then $l$ and the relative sizes between the input frequencies and the  out frequency don't play a role.  Hence the same argument used for the estimate of $\mathcal{J}_{k;k_1,k_2;l;3}^{a;\mu,\nu;j_1, j_2}(t, \xi)$ in the proof of Lemma \ref{highhighWfixed2} still holds.  

From now on, we focus on  the  case $l\in [-m/2+\delta m, -100-5\max\{\tilde{c},-c\}]\cap \Z$. Recall \eqref{phases}, from Taylor expansion,  for any $\xi, \eta \in supp(\psi_k(\xi) \psi_{k_1}(\xi-\eta)\psi_{k_2}(\eta)     \varphi_{l;-m/2+\delta m}( l_{\mu,\nu}(\xi, \eta))  )$,  we have 
\[
 \Phi_{\mu, \nu}^a(\xi, \eta) =  |\xi_1|- \mu |\xi_1-\eta_1| - \nu |\eta_1| + \mathcal{O}(2^{k+l}).
\]

Based on the possible sign of $\mu, \nu,\xi_1, \eta_1$, we define the following partition of the unity functions,
\be\label{july7eqn51}
\begin{split}
\forall \mu, \nu\in\{+, -\}, \quad  1&=\varphi_{\mu,\nu}^1(\xi, \eta) + \varphi_{\mu,\nu}^2(\xi, \eta),\\
\varphi_{+,+}^1(\xi, \eta):= \mathbf{1}_{[0, \infty)}( (\xi_1-\eta_1)\eta_1),\qquad & \varphi_{+,+}^2(\xi, \eta):= \mathbf{1}_{ (-\infty,0)}( (\xi_1-\eta_1)\eta_1),\\ 
\varphi_{+,-}^1(\xi, \eta):=  \mathbf{1}_{ (-\infty,0)} (  \xi_1\eta_1),\qquad & \varphi_{+,+}^2(\xi, \eta):= \mathbf{1}_{[0, \infty)} (  \xi_1\eta_1),\\ 
\varphi_{-,+}^1(\xi, \eta):=  \mathbf{1}_{ (-\infty,0)} (   (\xi_1-\eta_1)\xi_1),\qquad & \varphi_{+,+}^2(\xi, \eta):=  \mathbf{1}_{[0, \infty)} ( (\xi_1-\eta_1)\xi_1),\\ 
\varphi_{-,-}^1(\xi, \eta):= 0,\qquad & \varphi_{+,+}^2(\xi, \eta):= 1.\\ 
\end{split}
\ee
where the cutoff function $\varphi_{\mu,\nu}^1(\xi, \eta)$ determines the resonance case and the cutoff function $\varphi_{\mu,\nu}^2(\xi, \eta)$ determines the non-resonance case.

With the partition of unity in \eqref{july7eqn51}, we split $ \mathcal{J}_{k;k_1,k_2;l}^{\mu,\nu;j_1, j_2}(t, \xi)$ into two parts as follows, 
\be
\begin{split}
\mathcal{J}_{k;k_1,k_2;l}^{a;\mu,\nu;j_1, j_2}(t, \xi)  =\sum_{i'=1,2}   &\mathcal{J}_{k;k_1,k_2;l}^{a;\mu,\nu;j_1, j_2;i'}(t, \xi) , \quad    
\mathcal{J}_{k;k_1,k_2;l}^{a;\mu,\nu;j_1, j_2;i'}(t, \xi)  :=  \int_{\R^3} e^{i t \Phi_{\mu, \nu}^a ( \xi ,\eta)} q^a_{\mu, \nu} (\xi-\eta, \eta) \\ 
& \times  \widehat{h_{1,k_1}^{\mu}}(t, \xi-\eta) \widehat{h_{2,k_2}^{\nu}}(t, \eta)   
    \psi_k(\xi)      \varphi_{l;-m/2+\delta m}( l_{\mu,\nu}(\xi, \eta))    \varphi_{\mu,\nu}^{i'}(\xi, \eta) d\eta. 
\end{split}
\ee

$\bullet$\qquad If $i'=1$, i.e., the resonance case. 

For any $    \mu, \nu\in\{+, -\},a\in\{1,2\}, $  $\xi, \eta \in supp(\psi_k(\xi) \psi_{k_1}(\xi-\eta)\psi_{k_2}(\eta)   $ $  \varphi_{l;-m/2+\delta m}( l_{\mu,\nu}(\xi, \eta))    \varphi_{\mu,\nu}^{1}(\xi, \eta)  )$,  we have
\[
 \big|\nabla_\xi \big( \Phi_{ \mu, \nu}^a ( \xi ,\eta)\big)\big| +  \big|\nabla_\xi \big( \Phi_{ \mu, \nu}^a ( \xi ,\xi-\eta)\big)\big| \lesssim 2^{l/2}.
\]
Hence, as in the proof of Lemma \ref{lowhighess}, we can rule out the case $j\geq m+l/2+\delta m $.  
Thanks to the  improved  upper bound of $j$.  , as in the proof of Lemma \ref{lowhighess}, the non-threshold case, i.e., $l\in (-m/2+\delta m, -100-5\max\{\tilde{c},-c\}]\cap \Z$ follows by the same argument. 

Now, we focus on the threshold case, i.e., $l= -m/2+\delta m.$ We localize further localize further based on the size of $\xi_2/|\eta|, \xi_3/|\eta|$   as follows,
 \be\label{2023jan30eqn100}
  \begin{split}
\mathcal{J}_{k;k_1,k_2;l}^{a;\mu,\nu;j_1, j_2;1}(t, \xi) &= \sum_{b_1, b_2\in[-m/2+\delta m,l/2+2]\cap\Z} \mathcal{J}_{k;k_1,k_2;l;b_1,b_2}^{a;\mu,\nu;j_1, j_2;1}(t, \xi),\\
\mathcal{J}_{k;k_1,k_2;l;b_1,b_2}^{a;\mu,\nu;j_1, j_2;1}(t, \xi)&:=\int_{\R^3} e^{i t \Phi_{\mu, \nu}^a ( \xi ,\eta)} q^a_{\mu, \nu} (\xi-\eta, \eta) \widehat{h_{1;j_1,k_1}^{ }}(t, \xi-\eta) \widehat{h_{2;j_2,k_2}^{\nu}}(t, \eta) \psi_k(\xi)  \varphi_{\mu,\nu}^{1}(\xi, \eta)   \\ 
 & \quad \times \psi_{b_1,b_2}(\xi)   \varphi_{l;-m/2+\delta m}( l_{\mu,\nu}(\xi, \eta))   d\eta,\\ 
\psi_{b_1,b_2}(\xi) &:=   \varphi_{ b_1;-m/2+\delta m}( \xi_2/|\xi|)\varphi_{ b_2;-m/2+\delta m}( \xi_3/|\xi|).\\
\end{split}
\ee
Recall the approximations in  \eqref{2023jan30eqn51}. From  the estimate of symbol in  \eqref{july13eqn11}, $L^2-L^\infty$-type bilinear estimate, and the volume of support of the frequency variable of the input putted in $L^2$, we have 
\be
\begin{split}
&\big\| \sum_{ \begin{subarray}{c}
j_i\in [-k_{i,-}, \infty)\cap \Z, i=1,2\\
\end{subarray} }  P_k\big[ \varphi_{j,k}(\cdot)\mathcal{F}^{-1}\big[ \mathcal{J}_{k;k_1,k_2;l;b_1,b_2}^{a;\mu,\nu;j_1, j_2;1}(t, \xi)  \big]\big](x)\big\|_{L^2}\\
&\lesssim     2^{ l + (1+\alpha)j} 2^{ 3k/2}\big( (2^{b_1}+2^{k+l}) (2^{b_2}+2^{k+l}) \big)^{1/2} 2^{-m+k-5k_{+}} \epsilon_1^2.\\
\end{split}
\ee
From the above estimate, we can rule out the case $b_1\leq -m/4-2\alpha m $ and   the case $b_2\leq -m/4-2\alpha m$. 

Now, it remains  to consider the case $b_1\geq  -m/4-2\alpha m$ and $b_2 \geq -m/4-2\alpha m$, for which we have $|\xi_2|, |\xi_3|\in [ 2^{-m/4-2\alpha m+k}, 2^{-m/4+\delta m +k} ]$. From  the approximations in  \eqref{2023jan30eqn51},  due to the  lower bound of $|\xi_2|, |\xi_3|$, we have
\be\label{2023jan30eqn110}
\big| 1+  {\mu \nu  c_1^2 |\xi_1-\eta_1|}{|\eta_1|^{-1}}\big|\gtrsim 2^{-3\alpha m  }, \quad \big| 1+  {\mu \nu  c_2^2 |\xi_1-\eta_1|}{|\eta_1|^{-1}}\big|\gtrsim 2^{-3\alpha m  }. 
\ee

Thanks to the above lower bound, the following estimate holds for the volume of support of $\eta$ for any fixed $\xi$, 
\be\label{2023jan31eqn1}
\begin{split}
&\big|\{  \eta :  |l_{\mu, \nu}(\xi,\eta)|\leq 2^l, |\eta|\sim 2^{k_2}, |\xi-\eta|\sim 2^{k_1}, |\xi|\sim 2^k \}\big| \lesssim 2^{3k+2l+6\alpha m }.
\end{split}
\ee

 Now, based on the size of phases $\Phi_{\mu, \nu}^a ( \xi ,\eta)$,  we split $\mathcal{J}_{k;k_1,k_2;l;b_1,b_2}^{a;\mu,\nu;j_1, j_2;1}(t, \xi)$ further into two parts as follows, 
\be\label{2023jan30eqn101}
  \begin{split}
\mathcal{J}_{k;k_1,k_2;l;b_1,b_2}^{a;\mu,\nu;j_1, j_2;1}(t, \xi)& = \sum_{a'=1,2} \mathcal{J}_{k;k_1,k_2;l;a';b_1,b_2}^{a;\mu,\nu;j_1, j_2;1}(t, \xi),\\
\mathcal{J}_{k;k_1,k_2;l;1;b_1,b_2}^{a;\mu,\nu;j_1, j_2;1}(t, \xi) &:=\int_{\R^3} e^{i t \Phi_{\mu, \nu}^a ( \xi ,\eta)} q^a_{\mu, \nu} (\xi-\eta, \eta) \widehat{h_{1;j_1,k_1}^{ }}(t, \xi-\eta) \widehat{h_{2;j_2,k_2}^{\nu}}(t, \eta) \psi_k(\xi)  \varphi_{\mu,\nu}^{1}(\xi, \eta)     \\ 
 &  \times  \psi_{b_1,b_2}(\xi)  \varphi_{l;-m/2+\delta m}( l_{\mu,\nu}(\xi, \eta))   \psi_{\leq -3m/4+ 10\alpha m}( \Phi_{\mu, \nu}^a ( \xi ,\eta)) d\eta,\\
 \mathcal{J}_{k;k_1,k_2;l;2;b_1,b_2}^{a;\mu,\nu;j_1, j_2;1}(t, \xi) &:=\int_{\R^3} e^{i t \Phi_{\mu, \nu}^a ( \xi ,\eta)} q^a_{\mu, \nu} (\xi-\eta, \eta) \widehat{h_{1;j_1,k_1}^{ }}(t, \xi-\eta) \widehat{h_{2;j_2,k_2}^{\nu}}(t, \eta) \psi_k(\xi)  \varphi_{\mu,\nu}^{1}(\xi, \eta)     \\ 
 &  \times  \psi_{b_1,b_2}(\xi)  \varphi_{l;-m/2+\delta m}( l_{\mu,\nu}(\xi, \eta))   \psi_{\geq -3m/4+ 10\alpha m}( \Phi_{\mu, \nu}^a ( \xi ,\eta)) d\eta.\\
\end{split}
\ee

Moreover, note that,  the approximation of phases in \eqref{2023jan29eqn61}  are still valid. For any fixed $\xi_1, \xi_2, \eta\in \R^3$, thanks to  the   estimate of lower bounds in \eqref{2023jan30eqn110}, we know that $\xi_3$ is localized inside a much smaller ball. More precisely,
\be\label{2023jan31eqn2}
\begin{split}
\big|\{ \xi_3:|l_{\mu,\nu}(\xi, \eta)|\sim 2^l, |\xi_2|\sim 2^{b_1+k},& |\xi_3|\sim 2^{b_2 + k}, |\xi|, |\xi-\eta|, |\eta|\sim 2^k, \\ 
& | \Phi_{\mu, \nu}^a ( \xi ,\eta)|\lesssim 2^{-3m/4+10\alpha m } \} \big|\lesssim 2^{-m/2+20\alpha m }.\\ 
\end{split}
\ee
From the above estimates of volume of support \eqref{2023jan31eqn1} and \eqref{2023jan31eqn2}, the estimate of symbol in \eqref{july13eqn11},   we have 
\be
\begin{split}
 &\sum_{ \begin{subarray}{c}
 (k_1,k_2)\in \cup_{i=1,2,3}\chi_k^i \\ 
 \max\{k_1,k_2\}-100-5\max\{\tilde{c}, -c\}\leq k \\ 
 k\leq \min\{k_1,k_2\}+100+5\max\{\tilde{c}, -c\}\\
 \end{subarray}}   \big\| \sum_{ \begin{subarray}{c}
j_i\in [-k_{i,-}, \infty)\cap \Z, i=1,2\\
\end{subarray} }  P_k\big[ \varphi_{j,k}(\cdot)\mathcal{F}^{-1}\big[   \mathcal{J}_{k;k_1,k_2;l;1;b_1,b_2}^{a;\mu,\nu;j_1, j_2;1}(t, \xi) \big]\big](x)\big\|_{L^2}  \\
&\lesssim 2^{(1+\alpha)j+ l } (2^{-m/2+20\alpha m }2^{b_1} )^{1/2} 2^{3k+2l+6\alpha  m - 8k_{+}}\epsilon_1^2 \lesssim 2^{- m-\alpha m}\epsilon_1^2. \\
\end{split}
\ee

For $\mathcal{J}_{k;k_1,k_2;l;2;b_1,b_2}^{a;\mu,\nu;j_1, j_2;1}(t, \xi) $, due to high oscillation in time, we do integration by parts in time once. As a result, from \eqref{2023jan31eqn1}, after using the volume of support of $\xi,\eta$, the estimate \eqref{july23eqn91} in Proposition \ref{Zest}, we have 
\be
\begin{split}
 &\sum_{ \begin{subarray}{c}
 (k_1,k_2)\in \cup_{i=1,2,3}\chi_k^i \\ 
 \max\{k_1,k_2\}-100-5\max\{\tilde{c}, -c\}\leq k \\ 
 k\leq \min\{k_1,k_2\}+100+5\max\{\tilde{c}, -c\}\\
 \end{subarray}}   \big\| \sum_{ \begin{subarray}{c}
j_i\in [-k_{i,-}, \infty)\cap \Z, i=1,2\\
\end{subarray} }  P_k\big[ \varphi_{j,k}(\cdot)\mathcal{F}^{-1}\big[  \int_{t_1}^{t_2} \mathcal{J}_{k;k_1,k_2;l;2;b_1,b_2}^{a;\mu,\nu;j_1, j_2;1}(t, \xi) \big]\big](x)\big\|_{L^2}  \\
&\lesssim 2^{(1+\alpha)j+ l +3m/4-10\alpha m } (2^{ b_1+b_2 } )^{1/2} 2^{3k+2l+6\alpha  m - 8k_{+}}\epsilon_1^2 \lesssim 2^{ -\alpha m}\epsilon_1^2. \\
\end{split}
\ee
$\bullet$\qquad If $i'=2$,  i.e., the non-resonance case. 

Recall \eqref{july7eqn51}, $ \forall a\in \{1,2\}, $ we have 
\[
  \big|\Phi_{\mu, \nu}^a(\xi, \eta)\varphi_{\mu,\nu}^2(\xi, \eta)\big| \sim 2^{k}.
\]
Hence, to utilize high oscillation in time, we do integration by parts in time once. As a result, by using the same argument used in the proof of Lemma \ref{lowhighosc}, we have
\[
 \sum_{ \begin{subarray}{c}
 (k_1,k_2)\in \cup_{i=1,2,3}\chi_k^i \\ 
 \max\{k_1,k_2\}-100-5\max\{\tilde{c}, -c\}\leq k \\ 
 k\leq \min\{k_1,k_2\}+100+5\max\{\tilde{c}, -c\}\\
 \end{subarray}}\| \mathcal{F}^{-1}\big[    \int_{t_1}^{t_2}  \mathcal{J}_{k;k_1,k_2;l}^{a;\mu,\nu;j_1, j_2;2}(t, \xi)  d t  \big]   \|_{Z}  \lesssim \epsilon_1^2.
\]
To sum up, our desired estimate \eqref{july7eqn4} holds. 
\end{proof}

\end{document}